\def\>{\rangle}
\def\<{\langle}
\newtheorem{thm}{Theorem}[section]
\newtheorem{prop}[thm]{Proposition}
\newtheorem{cor}[thm]{Corollary}
\newtheorem{lem}[thm]{Lemma}
\newtheorem{step}[thm]{Observation}
\theoremstyle{definition}
\newtheorem{defi}[thm]{Definition}
\newtheorem{example}[thm]{Example}
\theoremstyle{remark}
\newtheorem{remark}[thm]{Remark}
\newtheorem{pb}[thm]{Problem}
\numberwithin{equation}{section}
\newcommand{\un}{1\mkern -4mu{\rm l}}
\newcommand{\bN}{{\mathbb N}}
\newcommand{\bP}{{\mathbb P}}
\newcommand{\bR}{{\mathbb R}}
\newcommand{\bS}{{\mathbb S}}
\newcommand{\bV}{{\mathbb V}}
\newcommand{\cK}{{\mathcal K}}
\renewcommand{\cH}{{\mathcal H}}
\newcommand{\bM}{{\mathbb M}}
\newcommand{\bC}{{\mathbb C}}
\newcommand{\cC}{{\mathcal C}}
\newcommand{\cE}{{\mathcal E}}
\newcommand{\cF}{{\mathcal F}}
\newcommand{\cZ}{{\mathcal Z}}
\renewcommand{\cL}{{\mathcal L}}
\newcommand{\fC}{{\mathfrak C}}
\newcommand{\fP}{{\mathfrak P}}
\newcommand{\Tr}{\mathrm{Tr}}
\newcommand{\la}{\langle}
\newcommand{\ra}{\rangle}
\newcommand{\ba}{\begin{array}}
\newcommand{\ea}{\end{array}}
\newcommand{\be}{\begin{eqnarray*}}
\newcommand{\ee}{\end{eqnarray*}}
\newcommand{\beg}{\begin{eqnarray}}
\newcommand{\eeg}{\end{eqnarray}}
\newcommand{\beq}{\begin{equation}}
\newcommand{\eeq}{\end{equation}}
\newcommand{\beqn}{\begin{equation*}}
\newcommand{\eeqn}{\end{equation*}}
\newcommand{\ov}{\overline}
\renewcommand{\Re}{\mathrm{Re}}
\newcommand{\rT}{\mathrm{t}}
\newcommand{\rd}{\mathrm{d}}
\newcommand{\Aj}{\eta_1}
\newcommand{\Ad}{\ov{\eta_2}}
\newcommand{\Ps}[4]{\Psi_{#1#2}(\eta_{#3},\eta_{#4})}
\newcommand*\oline[1]{%
  \vbox{%
    \hrule height 0.5pt
    \kern0.25ex
    \hbox{%
      \kern-0.1em
      \ifmmode#1\else\ensuremath{#1}\fi
      \kern-0.1em
    }
  }
}
\newcommand{\id}{\mathrm{id}}
\newcommand{\jed}{\mathds{1}}
\newcommand{\veps}{\varepsilon}
\newcommand{\eps}{\epsilon}
\renewcommand{\i}{\mathrm{i}}
\newcommand{\e}[1]{e^{\i\theta_{#1}}}
\newcommand{\me}[1]{e^{-\i\theta_{#1}}}
\newcommand{\ti}[1]{\tilde{#1}}
\newcommand{\ran}{\mathrm{ran}\,}
\newcommand{\vep}{\varepsilon}
\title[Merging of positive maps]{Merging of positive maps: a construction of various classes of  positive maps on matrix algebras}
\author{Marcin Marciniak}
\address{Institute of Theoretical Physics and Astrophysics, University of Gda{\'n}sk, Wita Stwosza 57, 80 -954 Gda{\'n}sk, Poland}
\email{matmm@ug.edu.pl}
\author{Adam Rutkowski}
\address{Institute of Theoretical Physics and Astrophysics, University of Gda{\'n}sk, Wita Stwosza 57, 80 -954 Gda{\'n}sk, Poland}
\address{National Quantum Information Center of Gda\'nsk, 81-824 Sopot, Poland}
\email{fizar@ug.edu.pl}
\subjclass[2010]{Primary: 46L60, 15B48, 81P40; Secondary: 81Q10, 46L05}
\keywords{positive map, exposed, nondecomposable, extremal, entanglement witness, PPT state, separable state}
\begin{document}
\maketitle
\begin{abstract}
For two positive maps $\phi_i:B(\cK_i)\to B(\cH_i)$, $i=1,2$, we construct a new linear map $\phi:B(\cH)\to B(\cK)$, where $\cK=\cK_1\oplus\cK_2\oplus\bC$, $\cH=\cH_1\oplus\cH_2\oplus\bC$, by means of some additional ingredients such as operators and functionals. We call it a \textit{merging} of maps $\phi_1$ and $\phi_2$. The properties of this construction are discussed. In particular, conditions for positivity of $\phi$, as well as for $2$-positivity, complete positivity, optimality and nondecomposability, are provided. In particular, we show that for a pair composed of $2$-positive and $2$-copositive maps, there is a nondecomposable merging of them. One of our main results asserts, that for a canonical merging of a pair composed of completely positive and completely copositive extremal maps, their canonical merging is an exposed positive map. This result provides a wide class of new examples of exposed positive maps. As an application, new examples of entangled PPT states are described.
\end{abstract}

\section{Introduction}
In recent years positive maps on operator algebras began to play a significant role in various branches of mathematical physics. For instance, in quantum information theory they became an important tool for detecting entanglement while in the theory of dynamical system they serve as a natural generalization of dynamical maps. After pioneer work of Erling St{\o}rmer \cite{Sto_pos} several papers appeared with several examples. However, in spite of great efforts of many mathematicians the classification of positive linear maps on C$^*$-algebras is still an open problem. Although there are many partial results scattered across the literature, it seems that we are far from full knowledge on all features of these objects. Even in the finite dimensional case the situation is unclear. For example, no algebraic formula for general positive map between matrix algebras is known.

Since we are dealing with convex structures, among all positive maps extremal ones are the key to solving the problem of classification. The explicit form of extremal positive maps is described fully only for the simplest cases: maps from $\bM_2(\bC)$ into itself and maps from $\bM_2(\bC)$ into $\bM_3(\bC)$. This is a consequence of the results of St{\o}rmer and Woronowicz \cite{Sto_pos,Wor_low} that all positive maps are decomposable in these cases. In general case, it is known that maps $B(\cK)\to B(\cH)$ of the form \beq\label{ads}X\mapsto AXA^*,\qquad\qquad X\mapsto AX^\rT A^*,\eeq where $\cdot^\rT$ stands for the transposition and $A\in B(\cK,\cH)$, are extremal in the cone of all positive maps between $B(\cK)$ and $B(\cH)$ (\cite{YH}). It was Choi, who gave the first example of an extremal positive map which is not of the form \eqref{ads} , hence it is not decomposable \cite{Choi75b}.

Due to Straszewicz theorem \cite{Stra}, extremal positive maps are approximated by elements of the thinner class of exposed positive maps. The Choi map is not exposed, but some variants of it \cite{CKL} turn out to be exposed \cite{HK1}. It was proved in \cite{MM} that maps \eqref{ads} are exposed. Further examples of exposed maps are given in \cite{HK,HK2,ChS,ChS2,Wor_non,MO}. Geometric approach to exposed maps was presented in \cite{MajT}.

The aim of this paper is to provide some scheme for constructing interesting examples of positive maps. It turns out that having two positive maps one can 'merge' them into a new map using some additional ingredients such as operators and functionals. It turns out that under some conditions the merging procedure produces a positive map. Further, we provide some necessary conditions and sufficient conditions for such properties of the merging as 2-positivity, complete positivity or (non)decomposability. For example, we show that for a pair composed of a 2-positive map and a 2-copositive one, there is a merging which is a nondecomposable positive map. One of our main results (Theorem \ref{mainm}) asserts that merging of maps \eqref{ads} is an exposed positive map. Thus, we provide a wide class of new examples of exposed positive maps. It seems also that the presented construction could be a good starting point for the attempt to describe a general form of an exposed positive map.

The paper is organized as follows. In Section 2 we provide some necessary preliminary definitions and results. Section 3 is devoted to the construction of merging of two positive maps and description of its general properties. In Section 4 we discuss properties of some special examples of merging. In particular, we prove that merging of completely positive and completely copositive extremal maps is an exposed positive maps. These maps generalize the example described by Miller and Olkiewicz in \cite{MO}. We also show that another generalization of this example, which was given in \cite{RSC} is also a result of merging of some two positive maps and is optimal but not extremal. Finally, in Section 5 we provide a discussion of the special case, when the merging procedure gives maps from $\bM_3(\bC)$ into itself. We characterize all positive maps of that form, and we formulate some conditions for various properties: 2-positivity, complete positivity, decomposability and nondecomposability.

\section{Notations, definitions and preliminary results}
If $\cK$ is a Hilbert space then by $B(\cK)$ (respectively $B(\cK)_+$) we denote the space of all bounded (respectively the cone of all bounded positive) operators on $\cK$. For Hilbert spaces $\cK$ and $\cH$ we denote by $B(B(\cK),B(\cH))$ the space of all bounded linear maps from $B(\cK)$ into $B(\cH)$. An element $\phi\in B(B(\cK),B(\cH))$ is a \textit{positive map} if $\phi\left(B(\cK)_+\right)\subset B(\cH)_+$.
Let $\bP(\cK,\cH)$ (or shortly $\bP$) denote the cone of all positive maps from $B(\cK)$ into $B(\cH)$. If $\phi,\psi\in \bP$ then $\psi\leq\phi$ means $\phi-\psi\in\bP$. A positive map $\phi$ is \textit{extremal} if $\phi$ generates an extremal ray in the cone $\bP$, i.e. $\psi\leq\phi$ implies $\psi=\lambda\phi$ for some $\lambda\in [0,1]$ whenever $\psi\in\bP$.

Assume now that $\cK$ and $\cH$ are finite dimensional spaces. Let $\cK\ni\xi\mapsto\ov{\xi}\in\cK$ and $\cH\ni x\mapsto\ov{x}\in\cH$ be an antilinear involutions. For $Y\in B(\cH)$ define its transpose $Y^\rT\in B(\cH)$ by $Y^\rT x=\ov{{Y^*} \ov{x}}$, $x\in\cH$. We consider a bilinear pairing $\la\cdot,\cdot\ra_\rd$ between $B(B(\cK),B(\cH))$ and $B(\cK)\otimes B(\cH)$ given by
\beq
\la \phi,X\otimes Y\ra_\rd=\Tr\left(\phi(X)Y^\rT\right),
\eeq
where $\phi\in B(B(\cK),B(\cH)$, $X\in B(\cK)$ and $Y\in B(\cH)$ (\cite{EK}, see also \cite{Sto_ext}). For a cone $\bV\subset B(B(\cK),B(\cH))$ we define its dual cone $\bV^\circ\subset B(\cK)\otimes B(\cH)$ by
\beq
\bV^\circ=\{Z\in B(\cK)\otimes B(\cH):\,\mbox{$\la\phi,Z\ra_\rd\geq 0$ for all $\phi\in\fC$}\}
\eeq
It is well known (\cite{Per,HHH}, see also \cite{MajM}) that $\bP^\circ=\bS$ 
where
\beq
\bS=\left\{\sum_{i=1}^n X_i\otimes Y_i:\,n\in\bN,\,X_i\in B(\cK)_+,\,Y_i\in B(\cH)_+,\,i=1,\ldots,n\right\}
\eeq
is the cone of the so called 'unnormalized separable states' or separable positive operators. We say that an element $\phi\in\bP$ is an \textit{exposed positive map} if there is $Z_0\in \bS$ such that
\beq
\bR_+\phi=\{\psi\in\bP:\,\la \psi,Z_0\ra_\rd=0\}.
\eeq

For $F\subset \bP$ (respectively $G\subset\bS$) we define $F'\subset\bS$ (respectively $G'\subset\bP$) by
$$
F'=\{Z\in \bS:\,\mbox{$\la \phi,Z\ra_\rd=0$ for all $\phi\in F$}\}
\qquad 
(G'=\{\phi\in\bP:\,\mbox{$\la\phi,Z\ra_\rd=0$ for all $Z\in G$}\})
$$ 
It is clear that $F'$ is a closed face of $\bS$ and $G'$ is a closed face of $\bP$. It can be shown (\cite{EK}) that $\phi\in\bP$ is an exposed positive map if and only if $\{\phi\}''=\bR_+\phi$.

If $\xi\in\cK$ and $x\in\cH$, then by $x\xi^*$ we denote an operator from $\cK$ into $\cH$ defined by
\beq
(x\xi^*)\zeta=\la\xi,\zeta\ra x,\qquad \zeta\in \cK.
\eeq
Notice that extremal elements of the cone $\bS$ are of the form $\eta\eta^*\otimes yy^*$, where $\eta\in\cK$ and $x\in\cH$. Thus, exposed positive maps can be characterized by the following condition (\cite{MM}): $\phi$ is exposed if and only if 
\beq
\label{expo}
\forall\,\psi\in\bP:\;\Big(\,\forall\,(\eta,y)\in\cK\times\cH:\,\la y,\phi(\eta\eta^*)y\ra=0\;\;\Rightarrow\;\;\la y,\psi(\eta\eta^*)y\ra=0\,\Big)\;\;\Rightarrow\;\;\psi\in\bR_+\phi.
\eeq

Given $k\in\bN$, a map $\phi:B(\cK)\to B(\cH)$ is called $k$-positive if  $\id_{\bM_k(\bC)}\otimes\phi:\bM_k(\bC)\otimes B(\cK)\to \bM_k(\bC)\otimes B(\cH)$ is a positive map. Similarly, $\phi$ is said to be a $k$-copositive map if $\mathrm{tran}_{\bM_k(\bC)}\otimes\phi:\bM_k(\bC)\otimes B(\cK)\to \bM_k(\bC)\otimes B(\cH)$ is positive, where $\mathrm{tran}_{\bM_k(\bC)}$ denotes the transposition map on the matrix algebra. We say that $\phi$ is  completely positive (respectively completely copositive) if it is $k$-positive (respectively $k$-copositive) for every $k\in\bN$.
Let $\mathbb{CP}$ (respectively $\mathbb{CP}^\rT$) denote the cone of all completely positive (respectively completely copositive) maps. It was proved by Choi \cite{Choi75} that the dual cone $\mathbb{CP}^\circ$ is nothing but the cone $\left(B(\cK)\otimes B(\cH)\right)_+$ of all positive operators on $\cK\otimes\cH$.

A useful tool in analysis of positive maps is the so called Choi matrix. Recall that the Choi-Jamio{\l}kowski isomorphism \cite{Choi75,Jam} is a map $B(B(\cK),B(\cH))\ni\phi\mapsto \cC_\phi\in B(\cK)\otimes B(\cH)$ given by
\beq
\label{CJ}
\cC_\phi=\sum_{i,j=1}^k \eps_i^{}\eps_j^*\otimes\phi(\eps_i^{}\eps_j^*)
\eeq
where $k=\dim\cK$ and $\eps_1,\ldots,\eps_k$ is some fixed orthonormal basis. $\cC_\phi$ is called a Choi matrix of the map $\phi$. The famous result of Choi (\cite{Choi75}) says that a map $\phi$ is completely positive if and only if the Choi matrix $\cC_\phi$ is positive definite.

Given a map $\phi\in B(B(\cK),B(\cH))$ one may consider its dual functional $\widetilde{\phi}$ acting on $B(\cK)\otimes B(\cH)$ given by $\widetilde{\phi}(Z)=\la\phi,Z\ra_\rd$ for $Z\in B(\cK)\otimes B(\cH)$ (\cite{Sto_ext}).
According to \cite[Lemma 4.2.3]{Sto_ks} (see also \cite{Sto_ext}) $\cC_\phi^\rT$ is a 'density matrix' of the functional $\widetilde{\phi}$, i.e.
\beq
\la \phi , Z\ra_\rd=\Tr(\cC_\phi^\rT Z), \qquad Z\in B(\cK)\otimes B(\cH).
\eeq
Therefore, $\phi$ is positive if and only if $\Tr(\cC_\phi^\rT Z)\geq 0$ for every separable operators $Z$, while $\phi$ is completely positive if and only if $\Tr(\cC_\phi^\rT Z)\geq 0$ for all positive definite operators. We say that a positive definite operator $Z$ on $\cK\otimes \cH$ is entangled if $Z\in (B(\cK)\otimes B(\cH))_+\setminus\mathbb{S}$. It follows that a positive definite operator is entangled if and only if there is a positive but not completely positive map such that $\Tr(\cC_\phi^\rT Z)<0$. We say that such a map $\phi$ detects entanglement of $Z$ or $\phi$ is an entanglement witness for $Z$ (see \cite{ChSew} for a review on entanglement witnesses and references therein).

If $\phi=\phi_1+\phi_2$ where $\phi_1$ is a completely positive map and $\phi_2$ is completely copositive one, then $\phi$ is called decomposable. Let $\mathbb{D}$ denote the cone of decomposable maps. By the result of \cite{Sto_dec} (see also \cite{EK,MajM}) $\mathbb{D}^\circ=\mathbb{T}$, where
\beq
\mathbb{T}=\{Z\in (B(\cK)\otimes B(\cH))_+:\,Z^\Gamma\in (B(\cK)\otimes B(\cH))_+\}
\eeq
By $Z^\Gamma$ we denote partial transposition of $Z$, i.e. $Z^\Gamma=\id_{B(\cK)}\otimes\mathrm{tran}_{B(\cH)}(Z)$. Elements of $\mathbb{T}$ are called PPT operators in analogy to PPT states \cite{Per,HHH}. Each separable positive operator is a PPT operator. The converse statement is true only if $\dim\cK=1$ or $\dim\cH=1$ or $(\dim\cK,\dim\cH)\in \{(2,2),(2,3),(3,2)\}$. This is a consequence of results of St{\o}rmer and Woronowicz \cite{Sto_pos,Wor_low} that $\mathbb{P}=\mathbb{D}$ if and only if one of the above dimension conditions holds.

It follows from the above remarks that a positive map $\phi$ is nondecomposable if and only if it is an entanglement witness for some entangled PPT operator, i.e. there is $Z\in \mathbb{T}$ such that $\Tr(\cC_\phi^\rT Z)<0$.
We will use this criterion several times.

Finally, let us recall that a map $\phi:B(\cK)\to B(\cH)$ is called optimal if there is no nonzero completely positive map $\psi:B(\cK)\to B(\cH)$ such that $\psi\leq \phi$ (\cite{Lew2000}).
Equivalently, $\phi$ is optimal if the face in $\bP$ generated by $\phi$ contains no nonzero completely positive maps \cite{HKopt,Kyeopt}. We say that a positive map $\phi:B(\cK)\to B(\cH)$ satisfies spanning property if 
\beq
\label{span}
\mathrm{span}\{ \xi\otimes x:\,\xi\in\cK,\,x\in\cH,\, \phi(\xi\xi^*)x=0\}=\cK\otimes\cH.
\eeq
It was shown in \cite{Lew2000} that spanning property is a sufficient condition for optimality. In \cite{Kyeopt} it was pointed out that $\phi$ satisfies spanning property if and only if the exposed face $\{\phi\}''$ generated by $\phi$ contains no nonzero completely positive maps. Therefore, one can reformulate this condition to the following one, which is simmilar to \eqref{expo}
\beq
\label{spanpro}
\forall\,\psi\in\bC\bP:\;\Big(\,\forall\,(\eta,y)\in\cK\times\cH:\,\la y,\phi(\eta\eta^*)y\ra=0\;\;\Rightarrow\;\;\la y,\psi(\eta\eta^*)y\ra=0\,\Big)\;\;\Rightarrow\;\;\psi=0.
\eeq

\section{Merging of positive maps}
\subsection{Block-matrices}
Let $\cH_1,\ldots,\cH_n$ be Hilbert spaces and $\cH=\bigoplus_{i=1}^n\cH_i$. By $W_i$ we denote the canonical isometrical embedding of $\cH_i$ into $\cH$. Assume that an antilinear involution $\cH\ni\xi\mapsto\ov{\xi}\in\cH$ is given, and $\ov{\cH_i}=\cH_i$ for every $i=1,\ldots,n$. For any $X\in B(\cH)$ we consider its block decomposition $X=(X_{ij})_{i,j=1,\ldots,n}$ where $X_{ij}=W_i^*XW_j\in B(\cH_j,\cH_i)$. 
In the sequel we usually will identify $B(\cH_j,\cH_i)$ with the subspace $W_i B(\cH_j,\cH_i) W_j^*\subset B(\cH)$. Given $i,j=1,\ldots,n$, we consider two operations on blocks: Hermitian conjugation $B(\cH_j,\cH_i)\ni X_{ij}\mapsto X_{ij}^*\in B(\cH_i,\cH_j)$ and transposition $B(\cH_j,\cH_i)\ni X_{ij}\mapsto X_{ij}^\rT\in B(\cH_i,\cH_j)$, where $X_{ij}^\rT$ is defined by the condition $\la \xi_j,X_{ij}^\rT\xi_i\ra=\la\ov{\xi_i},X_{ij}\ov{\xi_j}\ra$ for $\xi_i\in\cL_i$, $\xi_j\in\cH_j$. Note that Hermitian conjugation is an antilinear map while transposition is a linear one.

Thorough the paper we will frequently use the following criterion for positivity of block-matrices which is rather obvious.
\begin{prop}
\label{blockpos}
$X\in B(\cH)$ is positive if and only if for any $y_1,\ldots,y_n$, $y_i\in\cH_i$, $i=1,\ldots,n$, the scalar matrix $(\la y_i,X_{ij}y_j\ra)_{i,j=1,\ldots,n}\in \bM_n(\bC)$ is positive definite.
\end{prop}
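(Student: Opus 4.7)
The plan is straightforward since the statement is essentially the scalar interpretation of positivity in terms of the block decomposition. The key observation is that any vector $y \in \cH$ decomposes uniquely as $y = \sum_{i=1}^n W_i y_i$ with $y_i = W_i^* y \in \cH_i$, and accordingly
\[
\langle y, Xy\rangle \;=\; \sum_{i,j=1}^n \langle W_i y_i,\, X W_j y_j\rangle \;=\; \sum_{i,j=1}^n \langle y_i,\, X_{ij} y_j\rangle.
\]
This identity converts the question of positivity of $X$ into a statement about sums of ``block inner products'' and is the backbone of both directions.

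For the forward direction, suppose $X \geq 0$. Fix arbitrary $y_1, \ldots, y_n$ with $y_i \in \cH_i$ and arbitrary scalars $\alpha_1, \ldots, \alpha_n \in \bC$. Apply the identity above to the vector $y = \sum_i W_i \alpha_i y_i$. One obtains
\[
0 \leq \langle y, Xy\rangle \;=\; \sum_{i,j=1}^n \overline{\alpha_i}\,\alpha_j\,\langle y_i, X_{ij} y_j\rangle,
\]
which is precisely the quadratic form of $(\langle y_i, X_{ij} y_j\rangle)_{i,j}$ evaluated at $(\alpha_1, \ldots, \alpha_n)$. Since $(\alpha_i)$ is arbitrary, the scalar matrix is positive definite.

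For the converse, suppose the scalar matrix is positive definite for every choice of $y_1, \ldots, y_n$. Given any $y \in \cH$, set $y_i = W_i^* y$ and apply the positivity assumption with the all-ones vector $(1,1,\ldots,1) \in \bC^n$; this yields
\[
\sum_{i,j=1}^n \langle y_i, X_{ij} y_j\rangle \;\geq\; 0,
\]
which by the identity above is exactly $\langle y, Xy\rangle \geq 0$. Since $y$ is arbitrary, $X \geq 0$. There is no real obstacle here; the only subtlety worth noting is that in the forward direction one must introduce arbitrary scalar coefficients $\alpha_i$ to recover positivity of the whole $n\times n$ matrix, whereas the reverse direction only needs the diagonal of the quadratic form.
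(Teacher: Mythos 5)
Your argument is correct and is exactly the standard one the paper has in mind: the authors state Proposition \ref{blockpos} without proof, calling it ``rather obvious,'' and your identity $\la y,Xy\ra=\sum_{i,j}\la y_i,X_{ij}y_j\ra$ together with the insertion of arbitrary coefficients $\alpha_i$ in the forward direction supplies precisely the omitted details. Note only that ``positive definite'' here means positive semidefinite, as your quadratic-form argument correctly treats it.
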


\subsection{Definition and basic properties}
Let $\cK_1,\cK_2,\cH_1,\cH_2$ be Hilbert spaces.
Suppose that two positive maps $\phi_1:B(\cK_1)\to B(\cH_1)$ and $\phi_2:B(\cK_2)\to B(\cH_2)$ are given.

Let $\cK=\cK_1\oplus\cK_2\oplus\cK_3$ and $\cH=\cH_1\oplus\cH_2\oplus\cH_3$, where $\cK_3$ and $\cH_3$ are one dimensional spaces. We fix normalized vectors $\varepsilon\in\cK_3$ and $e\in\cH_3$, so that $\cK_3=\bC \varepsilon$ and $\cH_3=\bC e$. We consider antilinear involutions on $\cK$ and $\cH$ which are uniquely determined by involutions on $\cK_1, \cK_2$ and $\cH_1,\cH_2$ respectively and the condition that $\varepsilon$ and $e$ are real vectors, i.e. $\ov{\varepsilon}=\varepsilon$ and $\ov{e}=e$.
Our aim is to construct a new map $\phi: B(\cK)\to B(\cH)$ by means of the two given maps $\phi_1$ and $\phi_2$.

For the construction we need the following additional ingredients: linear operators $B_i:\cK_i\to \cH_i$ and $C_i:\cK_i\to\cH_i$, and linear functionals $\omega_i:B(\cK_i)\to\bC$, $i=1,2$.
Finally, for $i=1,2$, let $P_i$ be an orthogonal projection in $\cH_i$ onto the range of $\phi_i(\un_{\cK_i})$.

For $X\in B(\cK)$, let $(X_{ij})_{i,j=1,2,3}$ be the block decomposition of $X$ established by the decomposition $\cK=\bigoplus_{i=1}^3\cK_i$. Since $B(\cK_3,\cK_i)=B(\bC,\cK_i)\simeq\cH_i$, $i=1,2,3$, we can consider column-blocks $X_{i3}$ as vectors from $\cK_i$. Analogously, $B(\cK_j,\cK_3)=B(\cK_j,\bC)\simeq \cK_j^*$, so row-blocks $X_{3j}$ are nothing but functionals on $\cK_j$. The Hermitian conjugation and transposition transform column-vectors into row-functionals and vice-versa. Observe also that the block $X_{33}$ is nothing but scalar.
\begin{defi}
By a \textit{merging of the maps $\phi_1$ and $\phi_2$ by means of operators $B_i$, $C_i$ and functionals $\omega_i$}, we mean a map $\phi:B(\cK)\to B(\cH)$ given by
\beq
\label{merging}
\phi (X)
=
\left(\ba{ccc}
\phi_1(X_{11})+\omega_2(X_{22})P_1 & 0 & B_1X_{13} + C_1X_{31}^\rT \\
0 & \phi_2(X_{22})+\omega_1(X_{11})P_2 & B_2X_{23} + C_2X_{32}^\rT \\
X_{31}B_1^* + X_{13}^\rT C_1^* & X_{32}B_2^* + X_{23}^\rT C_2^* & X_{33}
\ea\right).
\eeq
Here $X=(X_{ij})_{i,j=1,2,3}\in B(\cK)$, where $X_{ij}\in B(\cK_j,\cK_i)$ are blocks of $X$. Similarly, block structure of $\phi(X)$ reflects the decomposition $\cH=\bigoplus_{i=1}^3\cH_i$, i.e. each block of $\phi(X)$ is an element of a respective subspace $B(\cH_j,\cH_i)\subset B(\cH)$.  
\end{defi}
It is obvious that the map of the above form is a linear one. The question is whether any interesting positive maps can be obtained by this construction.
\begin{example}[Trivial example]
Assume that $B_i$, $C_i$ are zeros and $\omega_i$ are positive functionals. Then $$\phi(X)=\left(\ba{ccc}\phi_1(X_{11}) +\omega_2(X_{22})P_1& 0 & 0 \\
0 & \phi_2(X_{22})+\omega_1(\eta_1\eta_1^*)P_2 & 0 \\
0 & 0 & X_{33}
\ea\right).
$$
Hence $\phi$ has the structure of the direct sum of positive maps. In particular, if $\omega_i$ are also zeros, then $\phi=\phi_1\oplus\phi_2\oplus\id_\bC:B(\cK_1)\oplus B(\cK_2)\oplus \bC\to B(\cH_1)\oplus B(\cH_2)\oplus \bC$.
\end{example}

We will say that a merging is \textit{nontrivial} if at least one of the operators $B_1$, $B_2$, $C_1$, $C_2$ is nonzero.

\begin{example}[Example of Miller and Olkiewicz]
\label{e:MO}
Let $\cK_i=\cH_i=\bC$, $i=1,2$. Define maps $\phi_i:\bC\to\bC$ by $\phi_i(x)=\frac{1}{2}x$ for $i=1,2$. Clearly, they are positive maps. Consider merging $\phi$ of these maps by means of the following ingredients $B_i,C_i,\omega_i:\bC\to\bC$, $i=1,2$:
$$ B_1x=C_2x=\frac{1}{\sqrt{2}}x,\quad C_1x=B_2x=0, \quad
\omega_1(x)=\omega_2(x)=\frac{1}{2}x,\qquad x\in\bC.$$
Then $\phi:B(\bC^3)\to B(\bC^3)$ has the form
\beq
\label{MO}
\phi\left(\ba{ccc}
x_{11} & x_{12} & x_{13} \\ x_{21} & x_{22} & x_{23} \\ x_{31} & x_{32} & x_{33}
\ea\right)
=\left(\ba{ccc}
\frac{1}{2}(x_{11}+x_{22}) & 0 & \frac{1}{\sqrt{2}}x_{13} \\[5pt]
0 & \frac{1}{2}(x_{11}+x_{22}) & \frac{1}{\sqrt{2}}x_{32} \\[5pt]
\frac{1}{\sqrt{2}}x_{31} & \frac{1}{\sqrt{2}}x_{23} & x_{33}
\ea\right).
\eeq
It was shown by Miller and Olkiewicz in \cite{MO} that the map \eqref{MO} is a bistochastic  exposed nondecomposable positive map.

This is a basic example. The idea of merging appeared as an attempt to generalize this example. For further generalizations we will consider a 'denormalized' version of \eqref{MO}
\beq
\label{MOunnorm}
\tilde{\phi}(X)=V\phi(X)V^*=\left(\ba{ccc}
x_{11}+x_{22} & 0 & x_{13} \\
0 & x_{11}+x_{22} & x_{32} \\
x_{31} & x_{23} & x_{33}
\ea\right)
\eeq
where $V=\mathrm{diag}(\sqrt{2},\sqrt{2},1)$. One can easily observe that $\tilde{\phi}$ is a merging of two identity maps on $\bC$ by means of
$$ \tilde{B}_1x=\tilde{C}_2x=x,\quad \tilde{C}_1x=\tilde{B}_2x=0, \quad
\tilde{\omega}_1(x)=\tilde{\omega}_2(x)=x,\qquad x\in\bC.$$
\end{example}
\begin{example}
\label{e:MOgen}
Let us consider the following higher dimensional generalization of the last example. Let $\cK_1$ and $\cK_2$ be arbitrary finite dimensional Hilbert spaces and $\cH_i=\cK_i$ for $i=1,2$. Consider maps $\phi_i:B(\cK_i)\to B(\cK_i)$, $i=1,2$, where
\beq
\phi_1(X)=X,\qquad X\in B(\cK_1),
\eeq
\beq
\phi_2(X)=X^\rT,\qquad X\in B(\cK_2).
\eeq
Let operators $B_i,C_i:\cK_i\to\cK_i$ and functionals $\omega_i$ on $B(\cK_i)$ be given by
$$B_1\eta_1=\eta_1,\; C_1\eta_1=0,\quad \eta_1\in\cK_1,\qquad\qquad
B_2\eta_2=0,\; C_2\eta_2=\eta_2,\quad \eta_2\in\cK_2,$$
$$\omega_i(X)=\Tr(X),\quad X\in B(\cK_i),\;i=1,2.$$
Then the merging of maps $\phi_1$ and $\phi_2$ by means of $B_i$, $C_i$, $\omega_i$ is a map $\phi_{\cK_1,\cK_2}:B(\cK_1\oplus\cK_2\oplus\bC)\to B(\cK_1\oplus\cK_2\oplus\bC)$ of the form
\beq
\label{MOhd}
\phi_{\cK_1,\cK_2}(X)=\left(\ba{ccc}
X_{11}+\Tr(X_{22})\jed_{B(\cK_1)} & 0 & X_{13} \\
0 & X_{22}^\rT+\Tr(X_{11})\jed_{B(\cK_2)} & X_{32}^\rT \\
X_{31} & X_{23}^\rT & X_{33}
\ea\right)
\eeq
It will be shown in Theorem \ref{mainm} that similarly to Miller-Olkiewicz example the above map is an exposed positive map.
\end{example}
\begin{example}
In \cite{RSC} the following generalization $\Lambda_d:\bM_d(\bC)\to \bM_d(\bC)$ of the map \eqref{MO} was considered
\beq
\label{RSCex}
\Lambda_d(X)=\dfrac{1}{d-1}\left(\ba{ccccc}
\sum\limits_{i=1}^{d-1}x_{ii} & \cdots & 0 & 0 & \sqrt{d-1}x_{1d} \\
\vdots & & \vdots & \vdots & \vdots \\
0 & \cdots & \sum\limits_{i=1}^{d-1}x_{ii} & 0 & \sqrt{d-1}x_{d-2,d} \\
0 & \cdots & 0 & \sum\limits_{i=1}^{d-1}x_{ii} & \sqrt{d-1}x_{d,d-1} \\
\sqrt{d-1}x_{d,1} & \cdots & \sqrt{d-1}x_{d,d-2} & \sqrt{d-1}x_{d-1,d} & (d-1)x_{d,d}
\ea\right)
\eeq
where $X=(x_{ij})\in\bM_d(\bC)$.
It was shown that $\Lambda_d$ is nondecomposable and optimal positive map.
Let $\phi_1:\bM_{d-2}(\bC)\to\bM_{d-2}(\bC)$ be given by $\phi_1(X)=(d-1)^{-1}\Tr(X)$ for $X\in\bM_{d-2}(\bC)$, and $\phi_2:\bC\to\bC$ given by $\phi_2(x)=(d-1)^{-1}x$ for $x\in\bC$. One can observe that the map $\Lambda_d$ is a merging of $\phi_1$ and $\phi_2$ by means of $B_i$, $C_i$, $\omega_i$, where $B_1,C_1:\bC^{d-2}\to\bC^{d-2}$, $B_2,C_2:\bC\to\bC$, $\omega_1:\bM_{d-2}(\bC)\to\bC$, $\omega_2:\bC\to\bC$ are given by $B_1\eta=(d-1)^{-1/2}\eta$, $C_1=0$, $B_2=0$, $C_2x=(d-1)^{-1/2}x$, $\omega_1(X)=(d-1)^{-1}\Tr(X)$, $\omega_2(x)=(d-1)^{-1}x$. As in Example \ref{e:MO} we will consider 'denormalized' $\tilde{\Lambda}_{d}=V\Lambda_dV^*$ where $V=\mathrm{diag}(\sqrt{d-1},\ldots,\sqrt{d-1},1)$.
\end{example}
We can consider the following two further generalizations of \eqref{MOunnorm} in the direction established by the above example.
\begin{example}
\label{e:RSCgen}
Let $\cK_1,\cK_2$ be arbitrary Hilbert spaces and $\cH_1=\cK_1$, $\cH_2=\cK_2$. Consider $\phi_i(X)=\Tr(X)\jed_{B(\cK_i)}$ for $X\in B(\cK_i)$, $i=1,2$, and their merging $\Lambda_{\cK_1,\cK_2}$ by means of $B_1=\jed_{B(\cK_1)}$, $C_1=0$, $B_2=0$, $C_2=\jed_{B(\cK_2)}$ and $\omega_i(X)=\Tr(X)$ for $X\in B(\cK_i)$, $i=1,2$. Then $\Lambda_{\cK_1,\cK_2}$ has the following block-decomposition form
\beq
\label{RSCgen}
\Lambda_{\cK_1,\cK_2}(X)=
\left(\ba{ccc}
\big(\Tr (X_{11})+\Tr (X_{22})\big)\jed_{B(\cK_1)} & 0 & X_{13} \\
0 &\big( \Tr (X_{11})+\Tr (X_{22})\big)\jed_{B(\cK_2)} & X_{32}^\rT \\
X_{31} & X_{23}^\rT & X_{33}
\ea\right),
\eeq
where $X=(X_{ij})$, $X_{ij}\in B(\cK_j,\cK_i)$.
\end{example}
\begin{example}
\label{e:RSCgen2}
As previously, let Hilbert spaces $\cK_1$, $\cK_2$ be given. Let $\phi_1(X)=\Tr(X)\jed_{B(\cK_1)}$ for $X\in B(\cK_1)$, and $\phi_2(X)=X^t$ for $X\in B(\cK_2)$. By $\Omega_{\cK_1,\cK_2}$ we denote the merging of $\phi_1$ and $\phi_2$ by means of the same ingredients $B_i$, $C_i$, $\omega_i$ as Example \ref{e:RSCgen}.
Then
\beq
\label{RSCgen2}
\Omega_{\cK_1,\cK_2}(X)=
\left(\ba{ccc}
\big(\Tr (X_{11})+\Tr (X_{22})\big)\jed_{B(\cK_1)} & 0 & X_{13} \\
0 &X_{22}^t+\Tr (X_{11})\jed_{B(\cK_2)} & X_{32}^\rT \\
X_{31} & X_{23}^\rT & X_{33}
\ea\right)
\eeq
for $X=(X_{ij})\in B(\cK)$.
\end{example}

\subsection{Positivity of merging}
Now, we  discuss properties of the merging operation.
Our first goal is to describe some necessary and sufficient conditions for positivity of merging. Let us start with the following necessary condition.
\begin{prop}
\label{mergprop}
\label{posnecess}
Assume that a map $\phi$ is a merging of positive maps $\phi_1$ and $\phi_2$ by means of $B_i$, $C_i$, $\omega_i$, $i=1,2$. If $\phi$ is positive, then for each $i=1,2$,
\begin{enumerate}
\item
$\omega_i$ is a positive functional,
\item
for every $\eta_i\in\cK_i$ and $y_i\in\cH$\footnote{Here and later we will use a  convention that elements of $\cK$ are denoted by Greek letters $\xi,\eta,\ldots$, while elements of $\cH$ by Latin characters $x,y,\ldots$. Moreover, elements of subspaces $\cK_i\subset\cK$ and $\cH_i\subset\cH$, $i=1,2$, are always indexed by $i$.},
\beq
\label{epsineq}
(|\la y_i,B_i\eta_i\ra|+|\la y_i,C_i\ov{\eta_i}\ra|)^2\leq \la y_i^{},\phi_i^{}(\eta_i^{}\eta_i^*)y_i^{}\ra.
\eeq
\end{enumerate}
\end{prop}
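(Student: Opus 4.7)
The plan is to obtain both conditions by testing the positivity of $\phi$ on carefully chosen positive block-matrices $X\in B(\cK)$ and applying the block positivity criterion of Proposition~\ref{blockpos}. The two conditions decouple cleanly because the $(1,2)$ and $(2,1)$ blocks of $\phi(X)$ are always zero by the definition of the merging.

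For part (1), I would plug in $X=W_2(\eta_2\eta_2^*)W_2^*$, i.e.\ an operator living entirely in the $(2,2)$-block. Then the $(1,1)$-block of $\phi(X)$ reduces to $\omega_2(\eta_2\eta_2^*)P_1$ (all other first-row blocks vanish), so the diagonal positivity $\phi(X)\geq 0$ forces $\omega_2(\eta_2\eta_2^*)P_1\geq 0$ in $B(\cH_1)$. Provided $P_1\neq 0$ (which is automatic unless $\phi_1=0$), one concludes $\omega_2(\eta_2\eta_2^*)\geq 0$, and since rank-one positive operators span $B(\cK_2)_+$, $\omega_2$ is a positive functional. The argument for $\omega_1$ is symmetric, using $X=W_1(\eta_1\eta_1^*)W_1^*$ and the $(2,2)$-block of $\phi(X)$.

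For part (2), the key is to probe the off-diagonal block $B_iX_{i3}+C_iX_{3i}^\rT$, which only activates when both the $i$-th and the $3$-rd coordinate of the input vector are nonzero. I would therefore set $\xi=\eta_1+t\varepsilon$ for $t\in\bC$, take $X=\xi\xi^*$, and compute the blocks: $X_{11}=\eta_1\eta_1^*$, $X_{13}=t\eta_1$, $X_{31}^\rT=\bar{t}\,\overline{\eta_1}$, $X_{33}=|t|^2$, so that the $(1,3)$-block of $\phi(X)$ becomes $tB_1\eta_1+\bar{t}\,C_1\overline{\eta_1}$. Applying Proposition~\ref{blockpos} with test vectors $y_1\in\cH_1$, $y_2=0$, $y_3=e\in\cH_3$ gives a scalar $2\times 2$ matrix whose determinant condition reads
\begin{equation*}
\langle y_1,\phi_1(\eta_1\eta_1^*)y_1\rangle\cdot |t|^2 \;\geq\; |\,t\,\langle y_1,B_1\eta_1\rangle + \bar t\,\langle y_1,C_1\overline{\eta_1}\rangle\,|^2.
\end{equation*}
The analogous argument with $\xi=\eta_2+t\varepsilon$ yields the statement for $i=2$.

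The only nontrivial step is extracting the tight constant $(|a|+|b|)^2$ from the right-hand side: setting $a=\langle y_1,B_1\eta_1\rangle$, $b=\langle y_1,C_1\overline{\eta_1}\rangle$ and $t=re^{\i\theta}$, the quotient $|ta+\bar t b|^2/|t|^2=|e^{\i\theta}a+e^{-\i\theta}b|^2$ is maximized by aligning the phases of $e^{\i\theta}a$ and $e^{-\i\theta}b$, i.e.\ choosing $\theta=(\arg b-\arg a)/2$, which gives the maximal value $(|a|+|b|)^2$. Since the inequality must hold for every $t$, it must hold at the maximum, producing \eqref{epsineq}. This phase-optimization is the main (though minor) obstacle; everything else is bookkeeping on block decompositions and the conventions relating $X_{31}^\rT$ to $\overline{\eta_1}$ via the antilinear involution.
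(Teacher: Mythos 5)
Your proposal is correct and follows essentially the same route as the paper: part (1) is obtained by evaluating $\phi$ on a rank-one operator supported in a single diagonal block and reading off the induced diagonal term $\omega_j(\cdot)P_k$ (you read the $(1,1)$-block for an input in $\cK_2$, the paper reads the $(2,2)$-block for an input in $\cK_1$ — a symmetric choice), and part (2) uses the same test vector $\eta_i+t\varepsilon$, the same $2\times 2$ determinant condition from Proposition~\ref{blockpos}, and the same phase optimization over $t$ to sharpen $|e^{\i\theta}a+e^{-\i\theta}b|^2$ to $(|a|+|b|)^2$. The only point worth noting is your (correctly flagged) caveat that the part (1) argument needs $P_1\neq 0$, i.e.\ $\phi_1\neq 0$ — a degenerate case the paper's own proof also implicitly skips.
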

\begin{proof}
(1) Assume that $\eta_1\in\cK_1$. Then the rank one operator $\eta_1\eta_1^*\in B(\cK)$ has the following block decomposition
$$
\eta_1\eta_1^*=\left(\ba{ccc}
\eta_1\eta_1^* & 0 & 0 \\ 0 & 0 & 0 \\ 0 & 0 & 0
\ea\right).
$$
Thus, according to \eqref{merging}
$$
\phi(\eta_i\eta_i^*)=\left(\ba{ccc}
\phi_1(\eta_1\eta_1^*) & 0 & 0 \\ 0 & \omega_1(\eta_1\eta_1^*)P_2 & 0 \\ 0 & 0 & 0
\ea\right)
$$
Since the above matrix is positive, its diagonal entries are positive. In particular, $\omega_1$ is nonnegative on all rank one positive operators, hence it is a positive functional. Positivity of $\omega_2$ is proved by the same arguments.

(2) Now consider an element $\eta\in\cK$ of the form $\eta=\eta_1+\alpha\epsilon$ for $\eta_1\in\cK_1$ and $\alpha\in \bC$. The operator $\eta\eta^*$ has block decomposition of the form
$$
\eta\eta^*=
\left(\ba{ccc}
\eta_1\eta_1^* & 0 & \ov{\alpha}\eta_1 \\
0 & 0 & 0 \\
\alpha\eta_1^* & 0 & |\alpha|^2
\ea\right).
$$
Thus,
$$
\phi(\eta\eta^*)=\left(\ba{ccc}
\phi_1(\eta_1\eta_1^*) & 0 & \ov{\alpha}B_1\eta_1 + \alpha C_1\ov{\eta_1} \\
0 & \omega_1(\eta_1\eta_1^*)P_2 & 0 \\
\alpha(B_1\eta_1)^* + \ov{\alpha}(C_1\ov{\eta_1})^* & 0 & |\alpha|^2
\ea\right).
$$
Since it is a positive block-matrix, we conclude (c.f. Proposition \ref{blockpos}) that for any $y_1\in\cH_1$, the scalar matrix
$$
\left(\ba{ccc}
\la y_1,\phi_1(\eta_1\eta_1^*)y_1\ra & 0 & \ov{\alpha}\la y_1,B_1\eta_1\ra +\alpha\la y_1, C_1\ov{\eta_1}\ra \\
0 & \omega_1(\eta_1\eta_1^*)\Vert P_2y_2\Vert^2 & 0 \\
\alpha\la B_1\eta_1,y_1\ra+\ov{\alpha}\la C_1\ov{\eta_1}, y_1\ra & 0 & |\alpha|^2
\ea\right)
$$
is positive definite. Therefore, we get the inequality
\beq
\label{uyu}
|\ov{\alpha}\la y_1,B_1\eta_1\ra +\alpha\la y_1, C_1\ov{\eta_1}\ra|^2\leq |\alpha|^2\la y_1,\phi_1(\eta_1\eta_1^*)y_1\ra.
\eeq
It is satisfied for any $\alpha\in\bC$. Let $\theta\in\bR$ be such that
$\la y_1,B_1\eta_1\ra\ov{\la y_1,C_1\ov{\eta_1}\ra}=|\la y_1,B_1\eta_1\ra| |\la y_1,C_1\ov{\eta_1}\ra|e^{i\theta}$ and $\alpha=e^{-i\theta/2}$. Then the inequality \eqref{uyu} takes the form \eqref{epsineq} for $i=1$.
\end{proof}

For $i=1,2$, let $\psi_i:B(\cK_i)\to B(\cH_i)$ and $\chi_i:B(\cK_i)\to B(\cH_i)$ be positive maps defined by
\beq
\psi_i(X)=B_iXB_i^*,\qquad\chi_i(X)=C_iX^\rT C_i^*,\qquad\qquad X\in B(\cK_i).
\eeq
\begin{cor}
If the merging of positive maps $\phi_1$, $\phi_2$ by means of $B_i,C_i,\omega_i$ is positive, then for $i=1,2$
\begin{enumerate}
\item
$\psi_i+\chi_i\leq\phi_i$,
\item
$\mathrm{ran}\,B_i\subset \mathrm{ran}\,\phi_i(\jed_{B(\cK_i)})$ and $\mathrm{ran}\,C_i\subset \mathrm{ran}\,\phi_i(\jed_{B(\cK_i)})$.
\end{enumerate}
\end{cor}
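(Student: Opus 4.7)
The plan is to derive both assertions directly from inequality \eqref{epsineq} established in Proposition \ref{posnecess}.

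For (1), I would use the standard fact that a linear map $\Phi:B(\cK_i)\to B(\cH_i)$ is positive if and only if $\la y, \Phi(\eta\eta^*)y\ra\ge 0$ for all $\eta\in\cK_i$ and $y\in\cH_i$ (every positive operator is a finite sum of rank-one positives, and positivity of an operator is tested against vectors). So it suffices to show that
\beqn
\la y_i,\psi_i(\eta_i\eta_i^*)y_i\ra+\la y_i,\chi_i(\eta_i\eta_i^*)y_i\ra\le\la y_i,\phi_i(\eta_i\eta_i^*)y_i\ra
\eeqn
for every $\eta_i\in\cK_i$, $y_i\in\cH_i$. A brief computation from the definition of transposition gives $(\eta_i\eta_i^*)^\rT=\ov{\eta_i}\,\ov{\eta_i}^*$, so the left-hand side equals $|\la y_i,B_i\eta_i\ra|^2+|\la y_i,C_i\ov{\eta_i}\ra|^2$. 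The desired inequality is then the immediate consequence of \eqref{epsineq} and the elementary bound $|a|^2+|b|^2\le(|a|+|b|)^2$.

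For (2), I would apply (1) at $X=\jed_{B(\cK_i)}$. Since $\jed^\rT=\jed$, this yields
\beqn
B_iB_i^*+C_iC_i^*=\psi_i(\jed_{B(\cK_i)})+\chi_i(\jed_{B(\cK_i)})\le\phi_i(\jed_{B(\cK_i)}),
\eeqn
and, as both summands on the left are positive, we get separately $B_iB_i^*\le\phi_i(\jed_{B(\cK_i)})$ and $C_iC_i^*\le\phi_i(\jed_{B(\cK_i)})$. For any two positive operators $A\le B$ one has $\ker B\subset\ker A$ (if $Bx=0$ then $\la x,Ax\ra\le\la x,Bx\ra=0$, whence $Ax=0$ by positivity of $A$), hence $\ran A\subset\ran B$ by self-adjointness. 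Combined with the finite-dimensional identity $\ran B_i=\ran(B_iB_i^*)$ (and analogously for $C_i$), this gives $\ran B_i\subset\ran\phi_i(\jed_{B(\cK_i)})$ and $\ran C_i\subset\ran\phi_i(\jed_{B(\cK_i)})$.

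There is really no hard step here; the content of the corollary is an almost immediate repackaging of Proposition \ref{posnecess}. The only place requiring mild care is the identification of $(\eta_i\eta_i^*)^\rT$ with $\ov{\eta_i}\,\ov{\eta_i}^*$ so that $\chi_i$ evaluates correctly on rank-one projections; after that, everything reduces to the inequality $(|a|+|b|)^2\ge|a|^2+|b|^2$ for (1) and to the standard relationship between range inclusion and the order on positive operators for (2).
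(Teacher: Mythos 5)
Your proof is correct and follows essentially the same route as the paper: part (1) is exactly the paper's argument, combining \eqref{epsineq} with the identifications $|\la y_i,B_i\eta_i\ra|^2=\la y_i,\psi_i(\eta_i\eta_i^*)y_i\ra$, $|\la y_i,C_i\ov{\eta_i}\ra|^2=\la y_i,\chi_i(\eta_i\eta_i^*)y_i\ra$ and the bound $(|a|+|b|)^2\ge|a|^2+|b|^2$. For (2) the paper only says it is immediate from \eqref{epsineq}; your derivation via $B_iB_i^*\le\phi_i(\jed_{B(\cK_i)})$ and the standard kernel/range argument is a perfectly valid way to fill in that one-line step.
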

\begin{proof}
(1) Let $\eta_1\in \cK_1$ and $y_1\in\cH_1$.
Observe that
$|\la y_1,B_1\eta_1\ra|^2=\la y_1,\psi_1(\eta_1\eta_1^*)y_1\ra$ and $|\la y_1,C_1\ov{\eta_1}\ra|^2=\la y_1,\chi_1(\eta_1\eta_1^*)y_1\ra$.
Hence we conclude from \eqref{epsineq}
$$\la y_1,\psi_1(\eta_1\eta_1^*)y_1\ra + \la y_1,\chi_1(\eta_1\eta_1^*)y_1\ra\leq \la y_1,\phi_1(\eta_1\eta_1^*)y_1\ra.$$
As it holds for any choice of $\eta_1$ and $y_1$, the inequality $\psi_1+\chi_1\leq\phi_1$ follows. Case $i=2$ is shown by similar arguments.

(2) It immediately follows from \eqref{epsineq}
\end{proof}

\begin{remark}
Having the above result in mind we conclude that in order to produce some nontrivial positive map by the merging procedure one should take maps $\phi_1$ and $\phi_2$ with some 'regularity' properties.
For example, no notrivial merging of two extremal nondecomposable maps produces a positive map.
However, we will see that for properly chosen 'regular' maps there is a possibility for nontrivial merging. Surprisingly, merging of 'regular' maps can produce highly 'nonregular' positive maps.
\end{remark}

Now, we are ready to give characterization of positive merging in terms of merging ingredients. To this end, for each $i=1,2$ and every $\eta_i\in\cK_i$, $y_i\in\cH_i$, we define the following parameters
\beq
\mu_i(\eta_i,y_i)=\sqrt{\la y_i,\phi_i(\eta_i^{}\eta_i^*)y_i\ra}
\eeq
\beq
\label{epsi}
\varepsilon_i(\eta_i,y_i)=|\la y_i,B_i\eta_i\ra|+|\la y_i,C_i\ov{\eta_i}\ra|
\eeq
\beq
\label{deltai}
\delta_i(\eta_i,y_i)=\sqrt{\mu_i(\eta_i,y_i)^2-\varepsilon_i(\eta_i,y_i)^2}
\eeq
\beq
\sigma_1(\eta_1,y_2)=\sqrt{\omega_1^{}(\eta_1^{}\eta_1^*)}
\,\Vert P_2y_2\Vert,\quad
\sigma_2(\eta_2,y_1)=\sqrt{\omega_2^{}(\eta_2^{}\eta_2^*)}
\,\Vert P_1y_1\Vert
\eeq
Observe that each of the above functions $\cK_i\times\cH_i\to\bR_+$ has the homogeneity property, i.e. $f(\alpha\eta_i,\beta y_i)=|\alpha||\beta|f(\eta_i,y_i)$ for every $\alpha,\beta\in\bC$, where $f$ stands for any of these functions.
\begin{thm}
\label{t:pos}
The merging $\phi$ of positive maps $\phi_1$, $\phi_2$ by means of $B_i$, $C_i$, $\omega_i$ is a positive map if and only if the following conditions are satisfied
\begin{enumerate}[(i)]
\item $\omega_i$ are positive functionals for $i=1,2$,
\item $\varepsilon(\eta_i,y_i)\leq\mu_i(\eta_i,y_i)$ for $i=1,2$, $\eta_i\in\cK_i$, $y_i\in\cH_i$,
\item for every $\eta_1\in\cK_1$, $\eta_2\in \cK_2$, $y_1\in\cH_1$, $y_2\in\cH_2$,
\beq
\label{e:posineq}
\delta_1(\eta_1,y_1)\delta_2(\eta_2,y_2)+\sigma_1(\eta_1,y_2)\sigma_2(\eta_2,y_1)\geq \varepsilon_1(\eta_1,y_1)\varepsilon_2(\eta_2,y_2)
\eeq
\end{enumerate}
\end{thm}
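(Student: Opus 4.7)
The plan is to reduce positivity of $\phi$ to positivity of $\phi(\eta\eta^*)$ for rank-one $\eta\eta^* \in B(\cK)_+$ (via the spectral theorem and linearity of $\phi$), and then to recast the latter, using the block-positivity criterion of Proposition~\ref{blockpos}, as positivity of a scalar $3\times 3$ matrix. Writing $\eta=\eta_1+\eta_2+\alpha\varepsilon$ with $\eta_i\in\cK_i$, $\alpha\in\bC$, the formula \eqref{merging} yields
\begin{equation*}
\phi(\eta\eta^*)=\left(\begin{array}{ccc}\phi_1(\eta_1\eta_1^*)+\omega_2(\eta_2\eta_2^*)P_1 & 0 & v_1\\ 0 & \phi_2(\eta_2\eta_2^*)+\omega_1(\eta_1\eta_1^*)P_2 & v_2\\ v_1^* & v_2^* & |\alpha|^2\end{array}\right),\qquad v_i=\bar\alpha B_i\eta_i+\alpha C_i\ov{\eta_i}.
\end{equation*}
Testing against $y=y_1+y_2+\beta e$ then reduces positivity of $\phi$ to the requirement that, for every $\alpha, \beta, \eta_i, y_i$, the Hermitian matrix $M$ with diagonal $(\mu_1^2+\sigma_2^2,\;\mu_2^2+\sigma_1^2,\;|\alpha\beta|^2)$, zero $(1,2)$ entry, and $(i,3)$ entry $\beta\la y_i,v_i\ra$ be positive semidefinite.

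\smallskip
Conditions (i), (ii), (iii) correspond exactly to nonnegativity of the principal minors of $M$ of sizes $1$, $2$, $3$, respectively. Nonnegativity of the diagonal entries (upon isolating each $\omega_j(\eta_j\eta_j^*)$ by taking $\eta_i=0$) yields (i), which is also recorded in Proposition~\ref{posnecess}. The $(i,3)$ principal $2\times 2$ minor, combined with the triangle bound $|\la y_i,v_i\ra|\le|\alpha|\varepsilon_i$ (which is tight for a suitable phase of $\alpha$) and the choice $\eta_j=0$, yields (ii). For the necessity of (iii), I first use phase freedom: replacing $\eta_i$ by $e^{i\rho_i}\eta_i$ rotates the cross product $\la y_i,B_i\eta_i\ra\overline{\la y_i,C_i\ov{\eta_i}\ra}$ by $e^{2i\rho_i}$, and choosing $\rho_1,\rho_2$ together with $\alpha>0$ real gives $|\la y_i,v_i\ra|=|\alpha|\varepsilon_i$ simultaneously for $i=1,2$. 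In this aligned configuration, $\det M\ge 0$ simplifies, via $\mu_i^2=\delta_i^2+\varepsilon_i^2$, to $(\delta_1^2+\sigma_2^2)(\delta_2^2+\sigma_1^2)\ge\varepsilon_1^2\varepsilon_2^2$. Substituting $t\eta_1$ for $\eta_1$ (which scales $\delta_1,\sigma_1,\varepsilon_1$ by $t$) and setting $u=t^2$ turns this into the quadratic
\begin{equation*}
(\delta_1\sigma_1)^2\,u^2+\bigl(\delta_1^2\delta_2^2+\sigma_1^2\sigma_2^2-\varepsilon_1^2\varepsilon_2^2\bigr)u+(\delta_2\sigma_2)^2\ge 0\qquad\text{for all }u\ge 0.
\end{equation*}
When the middle coefficient is nonnegative the inequality is automatic and forces (iii) via AM--GM; when it is negative, the standard discriminant condition $|\,\cdot\,|\le 2\sqrt{AC}$ rearranges to $\varepsilon_1^2\varepsilon_2^2\le(\delta_1\delta_2+\sigma_1\sigma_2)^2$, i.e.\ to (iii).

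\smallskip
For sufficiency I assume (i)--(iii) and verify $M\ge 0$ by the same minors: the diagonal from (i); the $2\times 2$ minors from (ii) together with the unconditional bound $|\la y_i,v_i\ra|\le|\alpha|\varepsilon_i$; and $\det M\ge 0$ from this bound combined with the Cauchy--Schwarz inequality $(\delta_1^2+\sigma_2^2)(\delta_2^2+\sigma_1^2)\ge(\delta_1\delta_2+\sigma_1\sigma_2)^2$ and (iii). The main obstacle is the necessity of (iii): the clean triangle-like form of the inequality emerges only after combining the phase alignment with the one-parameter rescaling $\eta_1\mapsto t\eta_1$ and analyzing the resulting quadratic discriminant; the Cauchy--Schwarz link in the sufficiency direction is what makes the two directions match cleanly.
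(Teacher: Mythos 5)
Your proposal is correct and follows essentially the same route as the paper: reduction to rank-one inputs, Proposition~\ref{blockpos} to pass to a $3\times 3$ scalar matrix, phase alignment of the $B_i$/$C_i$ cross terms, and the rescaling $\eta_1\mapsto t\eta_1$ turning the determinant condition into a quadratic in $t^2$ whose nonnegativity is exactly (iii). The only cosmetic difference is that you organize sufficiency through all principal minors plus Cauchy--Schwarz, whereas the paper runs the same determinant computation as a chain of equivalences.
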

\begin{proof}
Necessity of conditions (i) and (ii) was proved already (cf. Proposition \ref{posnecess}).
Positivity of $\phi$ is equivalent to positive definiteness of the scalar matrix
\beq
\label{uyuy}
\left(\ba{ccc}
\veps_1^2+\delta_1^2+\sigma_2^2 & 0 & \ov{\alpha}\beta \la y_1,B_1\eta_1\ra + \alpha\beta \la y_1,C_1\ov{\eta_1}\ra \\
0 & \veps_2^2+\delta_2^2+\sigma_1^2 & \ov{\alpha}\beta \la y_2,B_2\eta_2\ra + \alpha\beta \la y_2,C_2\ov{\eta_2}\ra \\
\alpha\ov{\beta}\la B_1\eta_1,y_1\ra+ \ov{\alpha}\ov{\beta}\la C_1\ov{\eta_1},y_1\ra & \alpha\ov{\beta}\la B_2\eta_2,y_2\ra+ \ov{\alpha}\ov{\beta}\la C_2\ov{\eta_2},y_2\ra & |\alpha|^2|\beta|^2
\ea\right)
\eeq
for every $\eta_i\in\cK_i$, $y_i\in\cH_i$, $i=1,2$, and $\alpha,\beta\in\bC$. (Here, for simplicity, $\varepsilon_i$ stands for $\varepsilon_i(\eta_i,y_i)$. Similar convention for $\delta_i$ and $\sigma_i$.)
Without loss of generality we may assume $\alpha=1$ and $\beta=1$.
The determinant of the matrix \eqref{uyuy} is equal to
\be
\lefteqn{\big(\veps_1(\eta_1,y_1)^2+\delta_1(\eta_1,y_1)^2+\sigma_2(\eta_2,y_1)^2\big) \big(\veps_2(\eta_2,y_2)^2+\delta_2(\eta_2,y_2)^2+\sigma_1(\eta_1,y_2)^2\big)}\\
&&
-\left(\veps_1(\eta_1,y_1)^2+\delta_1(\eta_1,y_1)^2+\sigma_2(\eta_2,y_1)^2\right)|
\la y_2,B_2\eta_2\ra + 
\la y_2,C_2\ov{\eta_2}\ra|^2 \\
&&
-\left(\veps_2(\eta_2,y_2)^2+\delta_2(\eta_2,y_2)^2+\sigma_1(\eta_1,y_2)^2\right)|
\la y_1,B_1\eta_1\ra + 
\la y_1,C_1\ov{\eta_1}\ra|^2 
\ee
Let $\vartheta_i\in\bR$ satisfy equality
$e^{2i\vartheta_i}\la y_i,B_i\eta_i\ra\ov{\la y_i,C_i\ov{\eta_i}\ra}= |\la y_i,B_i\eta_i\ra||\la y_i,C_i\ov{\eta_i}\ra|.$
If we replace $\eta_i$ by $e^{i\vartheta_i}\eta_i$, then the above expression will be equal to
\beg
\lefteqn{\big(\veps_1(\eta_1,y_1)^2+\delta_1(\eta_1,y_1)^2+\sigma_2(\eta_2,y_1)^2\big) \big(\veps_2(\eta_2,y_2)^2+\delta_2(\eta_2,y_2)^2+\sigma_1(\eta_1,y_2)^2\big)} \nonumber\\
&&
-\left(\veps_1(\eta_1,y_1)^2+\delta_1(\eta_1,y_1)^2+\sigma_2(\eta_2,y_1)^2\right) \varepsilon_2(\eta_2,y_2)^2 \nonumber\\
&&
-\left(\veps_2(\eta_2,y_2)^2+\delta_2(\eta_2,y_2)^2+\sigma_1(\eta_1,y_2)^2\right) \varepsilon_1(\eta_1,y_1)^2 =\nonumber\\
&=&
\delta_1(\eta_1,y_1)^2\sigma_1(\eta_1,y_2)^2+ \delta_2(\eta_2,y_2)^2\sigma_2(\eta_2,y_1)^2 \label{popo}\\ &&
+\sigma_1(\eta_1,y_2)^2\sigma_2(\eta_2,y_1)^2+ \delta_1(\eta_1,y_1)^2\delta_2(\eta_2,y_2)^2- \varepsilon_1(\eta_1,y_1)^2\varepsilon_2(\eta_2,y_2)^2. \nonumber
\eeg
Consequently, the map $\phi$ is positive if and only if the expression in the line \eqref{popo} is nonnegative for every $\eta_i\in\cK_i$ and $y_i\in\cH_i$, $i=1,2$. In particular, it should be nonnegative if we fix $\eta_i$'s and $y_i$'s but replace $\eta_1$ by $s\eta_1$ for arbitrary $s\in\bR$. Therefore, we are lead to the condition that
$$
\delta_1^2\sigma_1^2s^4+(\sigma_1^2\sigma_2^2+\delta_1^2\delta_2^2- \varepsilon_1^2\varepsilon_2^2)s^2 +\delta_2^2\sigma_2^2\geq 0
$$
for every $s\in\bR$, where we omit arguments in the formula. Consequently,
$$ \sigma_1^2\sigma_2^2+\delta_1^2\delta_2^2-\varepsilon_1^2\varepsilon_2^2 \geq -2\delta_1\delta_2\sigma_1\sigma_2.$$
The last inequality is equivalent to
$$\left(\sigma_1\sigma_2+\delta_1\delta_2\right)^2\geq \varepsilon_1^2\varepsilon_2^2$$
which yields (iii).
\end{proof}

As an application of the above theorem we immediately get
\begin{cor}
\label{c:examp}
The maps $\phi_{\cK_1,\cK_2}$, $\Lambda_{\cK_1,\cK_2}$ and $\Omega_{\cK_1,\cK_2}$ described in Examples \ref{e:MOgen}, \ref{e:RSCgen} and \ref{e:RSCgen2} are positive.
\end{cor}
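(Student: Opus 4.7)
The plan is simply to verify the three conditions (i), (ii), (iii) of Theorem \ref{t:pos} for each of the three examples. All three mergings share the same ingredients $B_1=\jed$, $C_1=0$, $B_2=0$, $C_2=\jed$ and $\omega_i(X)=\Tr(X)$, and differ only in the choice of $\phi_1,\phi_2$. So I would first dispose of the ingredient-dependent conditions uniformly, and then handle the case distinction on $\phi_i$ only where it matters.

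Condition (i) is immediate since the trace is a positive functional. For the other two, I would compute the basic quantities from Theorem \ref{t:pos}: from the definitions
\[
\varepsilon_1(\eta_1,y_1)=|\la y_1,\eta_1\ra|,\qquad \varepsilon_2(\eta_2,y_2)=|\la y_2,\ov{\eta_2}\ra|.
\]
Note also that in each of the three examples $\phi_i(\jed_{B(\cK_i)})$ is a strictly positive operator, so $P_i=\jed_{B(\cH_i)}$, and consequently
\[
\sigma_1(\eta_1,y_2)=\|\eta_1\|\,\|y_2\|,\qquad \sigma_2(\eta_2,y_1)=\|\eta_2\|\,\|y_1\|.
\]

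Next I would compute $\mu_i$ case by case. If $\phi_i=\id$ then $\mu_i(\eta_i,y_i)=|\la y_i,\eta_i\ra|=\varepsilon_i$; if $\phi_i$ is the transposition then $\mu_i=|\la y_i,\ov{\eta_i}\ra|=\varepsilon_i$; if $\phi_i(X)=\Tr(X)\jed$ then $\mu_i=\|\eta_i\|\,\|y_i\|\geq\varepsilon_i$ by Cauchy--Schwarz. In each case condition (ii) holds, and $\delta_i^2=\mu_i^2-\varepsilon_i^2\geq 0$ is well defined (possibly zero).

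The main point — and the only place where something nontrivial happens — is condition (iii). Dropping the nonnegative term $\delta_1\delta_2$ on the left of \eqref{e:posineq}, it suffices to check
\[
\sigma_1(\eta_1,y_2)\sigma_2(\eta_2,y_1)\;=\;\|\eta_1\|\,\|\eta_2\|\,\|y_1\|\,\|y_2\|\;\geq\;\varepsilon_1(\eta_1,y_1)\,\varepsilon_2(\eta_2,y_2),
\]
which is an immediate consequence of Cauchy--Schwarz applied separately to each $\varepsilon_i$. Since this step is completely uniform in the three examples and does not use the specific form of $\phi_i$ at all, there is really no obstacle; the entire corollary follows from Theorem \ref{t:pos} together with two applications of Cauchy--Schwarz. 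If anything, the only care required is checking that $P_i$ is the identity (to avoid losing a factor from $\|P_i y_i\|$), which in all three examples is immediate from $\phi_i(\jed)>0$.
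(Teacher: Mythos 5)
Your proposal is correct and follows essentially the same route as the paper: both verify conditions (i)--(iii) of Theorem \ref{t:pos} by explicitly computing $\mu_i,\varepsilon_i,\delta_i,\sigma_i$ for each example and invoking Cauchy--Schwarz. Your observation that one may uniformly drop the $\delta_1\delta_2$ term and check $\sigma_1\sigma_2\geq\varepsilon_1\varepsilon_2$ once for all three cases is a slightly cleaner packaging of what the paper does case by case, but it is not a different argument.
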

\begin{proof}
For the map $\phi_{\cK_1,\cK_2}$ given by \eqref{MOhd} one checks that
\begin{align*}
&\mu_1(\eta_1,y_1)=|\la y_1,\eta_1\ra|,& &\mu_2(\eta_2,y_2)=|\la y_2,\ov{\eta_2}\ra |, &\\
&\varepsilon_1(\eta_1,y_1)=|\la y_1,\eta_1\ra|, && \varepsilon_2(\eta_2,y_2)=|\la y_2,\ov{\eta_2}\ra |,& \\
&\delta_1(\eta_1,y_1)=0, &&\delta_2(\eta_2,y_2)=0, &\\
& \sigma_1(\eta_1,y_2)=\Vert\eta_1\Vert\Vert y_2\Vert, &&
\sigma_2(\eta_2,y_1)=\Vert\eta_2\Vert\Vert y_1\Vert.
\end{align*}
Obviously, all conditions listed in Theorem \ref{t:pos} are satisfied.
As regards the map $\Lambda_{\cK_1,\cK_2}$ given by \eqref{RSCgen} one has
\begin{align*}
&\mu_1(\eta_1,y_1)=\Vert\eta_1\Vert\Vert y_1\Vert,& &\mu_2(\eta_2,y_2)=\Vert\eta_2\Vert\Vert y_2\Vert, &\\
&\varepsilon_1(\eta_1,y_1)=|\la y_1,\eta_1\ra|, && \varepsilon_2(\eta_2,y_2)=|\la y_2,\ov{\eta_2}\ra |,& \\
&\delta_1(\eta_1,y_1)=\sqrt{\Vert\eta_1\Vert^2\Vert y_1\Vert^2-|\la y_1,\eta_1\ra|^2}, &&\delta_2(\eta_2,y_2)=\sqrt{\Vert\eta_2\Vert^2\Vert y_2\Vert^2-|\la y_2,\ov{\eta_2}\ra |^2}, &\\
& \sigma_1(\eta_1,y_2)=\Vert\eta_1\Vert\Vert y_2\Vert, &&
\sigma_2(\eta_2,y_1)=\Vert\eta_2\Vert\Vert y_1\Vert.
\end{align*}
Again, all conditions of Theorem \ref{t:pos} are fulfilled. Finally, for the map $\Omega_{\cK_1,\cK_2}$ given by \eqref{e:RSCgen2} one has
\begin{align*}
&\mu_1(\eta_1,y_1)=\Vert\eta_1\Vert\Vert y_1\Vert,& &\mu_2(\eta_2,y_2)=|\la y_2,\ov{\eta_2}\ra|, &\\
&\varepsilon_1(\eta_1,y_1)=|\la y_1,\eta_1\ra|, && \varepsilon_2(\eta_2,y_2)=|\la y_2,\ov{\eta_2}\ra |,& \\
&\delta_1(\eta_1,y_1)=\sqrt{\Vert\eta_1\Vert^2\Vert y_1\Vert^2-|\la y_1,\eta_1\ra|^2}, &&\delta_2(\eta_2,y_2)=0, &\\
& \sigma_1(\eta_1,y_2)=\Vert\eta_1\Vert\Vert y_2\Vert, &&
\sigma_2(\eta_2,y_1)=\Vert\eta_2\Vert\Vert y_1\Vert.
\end{align*}
\end{proof}

Next corollary shows that there is possible some perturbation of merging ingredients which saves positivity.
\begin{cor}
\label{c:nu}
Let $\nu\in\bR$, $\nu>0$. If a merging of $\phi_1$ and $\phi_2$ by means of $B_1,B_2,C_1,C_2,\omega_1,\omega_2$ is a positive map, then also a merging of $\phi_1$ and $\phi_2$ by means of $B_1,B_2,C_1,C_2,\nu\omega_1,\nu^{-1}\omega_2$ is positive.
\end{cor}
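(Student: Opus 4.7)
The plan is to verify the three conditions of Theorem \ref{t:pos} for the perturbed merging, reading off how each of the auxiliary quantities $\mu_i,\varepsilon_i,\delta_i,\sigma_i$ changes when the functionals $\omega_1,\omega_2$ are rescaled.

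First I would note that (i) is immediate: positivity of $\omega_1$ and $\omega_2$ together with $\nu>0$ clearly implies positivity of $\nu\omega_1$ and $\nu^{-1}\omega_2$. Condition (ii) does not depend on the $\omega_i$ at all---only on $\phi_i$, $B_i$, $C_i$---so it continues to hold trivially because we perturb none of those ingredients.

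The only substantive point is (iii). By the definitions
$\sigma_1(\eta_1,y_2)=\sqrt{\omega_1(\eta_1\eta_1^*)}\,\Vert P_2 y_2\Vert$ and $\sigma_2(\eta_2,y_1)=\sqrt{\omega_2(\eta_2\eta_2^*)}\,\Vert P_1 y_1\Vert$, replacing $\omega_1$ by $\nu\omega_1$ and $\omega_2$ by $\nu^{-1}\omega_2$ sends
\[
\sigma_1 \longmapsto \sqrt{\nu}\,\sigma_1,\qquad \sigma_2\longmapsto \nu^{-1/2}\,\sigma_2,
\]
so the product $\sigma_1\sigma_2$ is invariant. Since $\delta_i$ and $\varepsilon_i$ depend only on $\phi_i,B_i,C_i$, they are unchanged. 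Thus the inequality
\[
\delta_1(\eta_1,y_1)\delta_2(\eta_2,y_2)+\sigma_1(\eta_1,y_2)\sigma_2(\eta_2,y_1)\geq \varepsilon_1(\eta_1,y_1)\varepsilon_2(\eta_2,y_2),
\]
which holds by assumption for the original merging, holds verbatim for the perturbed merging.

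There is essentially no obstacle here; the argument is a one-line invariance observation about the quantity $\sigma_1\sigma_2$, which was arranged by Theorem \ref{t:pos} to be the only place in the positivity criterion where $\omega_1$ and $\omega_2$ enter jointly. The conclusion follows by applying Theorem \ref{t:pos} in the reverse direction.
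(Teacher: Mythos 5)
Your proof is correct and is exactly the paper's argument, merely spelled out: the paper's one-line proof is that "for the perturbed merging all products in the inequality \eqref{e:posineq} remain unchanged," which is your observation that $\sigma_1\mapsto\sqrt{\nu}\,\sigma_1$, $\sigma_2\mapsto\nu^{-1/2}\sigma_2$ leaves $\sigma_1\sigma_2$ invariant while $\delta_i,\varepsilon_i$ are untouched. Nothing is missing.
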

\begin{proof}
For the perturbed merging all products in the inequality \eqref{e:posineq} remain unchanged.
\end{proof}

We finish this subsection with a necessary condition for 2-positivity of merging.
\begin{thm}
\label{2pos}
Let $\phi$ be a merging of positive maps $\phi_1$, $\phi_2$ by means of $B_i,C_i,\omega_i$. Assume that $\phi$ is a positive map, so that conditions (i), (ii), (iii) of Theorem \ref{t:pos} are satisfied. 
If $\phi$ is $2$-positive (respectively $2$-copositive), then
$C_i=0$ (respectively $B_i=0$) for $i=1,2$, and 
\beq
\label{zui}
\delta_1(\eta_1,y_1)\delta_2(\eta_2,y_2)\geq \veps_1(\eta_1,y_1)\veps_2(\eta_2,y_2)
\eeq
for every $\eta_i\in\cK_i$, $y_i\in\cH_i$, where $i=1,2$.
\end{thm}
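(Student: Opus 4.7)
Recall that $\phi$ is $2$-positive iff the scalar matrix
\[
\begin{pmatrix} \la y_1, \phi(\xi\xi^*) y_1\ra & \la y_1, \phi(\xi\zeta^*) y_2\ra \\ \la y_2, \phi(\zeta\xi^*) y_1\ra & \la y_2, \phi(\zeta\zeta^*) y_2\ra \end{pmatrix}
\]
is positive semidefinite for every $\xi,\zeta\in\cK$ and $y_1,y_2\in\cH$; $2$-copositivity is characterized by the same matrix with the off-diagonal entries $\phi(\xi\zeta^*)$ and $\phi(\zeta\xi^*)$ interchanged. The plan is in two steps.

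\emph{Step 1: vanishing of $C_i$ (resp.\ $B_i$).} I substitute $\xi=\veps$, $\zeta=\ov{\eta_i}$ into the $2$-positive test and take $y_1=y_i\in\cH_i$, $y_2=e$. The operator $\veps\veps^*$ has only its $(3,3)$ block nonzero, so \eqref{merging} gives $\phi(\veps\veps^*)_{ii}=0$, i.e.\ the $(1,1)$ entry of the above matrix vanishes. Positive semidefiniteness then forces $\la y_i,\phi(\veps\ov{\eta_i}^*)e\ra=0$. A direct computation of the block transpose $(\veps\ov{\eta_i}^*)_{3i}^{\rT}\in\cK_i$ together with \eqref{merging} shows that the $(i,3)$ block of $\phi(\veps\ov{\eta_i}^*)$ equals $C_i\eta_i$, and since $y_i$ and $\eta_i$ are arbitrary, $C_i=0$. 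The $2$-copositive case is parallel with $\phi(\zeta\xi^*)=\phi(\ov{\eta_i}\veps^*)$ playing the off-diagonal role; its $(3,i)$ block evaluates to $(B_i\ov{\eta_i})^*$, and the same reasoning yields $B_i=0$.

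\emph{Step 2: the inequality.} With $C_1=C_2=0$, I substitute $\xi=\eta_1+\alpha\veps$, $\zeta=\eta_2+\beta\veps$ and $y=y_1+se$, $y'=y_2+te$ into the $2$-positive test, where $\alpha,\beta,s,t\in\bC$ are free. Writing $a=\la y_1,B_1\eta_1\ra$, $b=\la y_2,B_2\eta_2\ra$ and introducing $u=\ov{s}\alpha$, $v=t\ov{\beta}$ (which independently range over $\bC$ as $s,t,\alpha,\beta$ vary), a direct application of \eqref{merging} yields
\begin{align*}
\la y,\phi(\xi\xi^*)y\ra&=\mu_1^2+|u|^2+2\mathrm{Re}(\ov{u}a), \\
\la y',\phi(\zeta\zeta^*)y'\ra&=\mu_2^2+|v|^2+2\mathrm{Re}(vb), \\
\la y,\phi(\xi\zeta^*)y'\ra&=va+u\ov{b}+uv.
\end{align*}
Completing the square via $\tilde{u}=u+a$, $\tilde{v}=v+\ov{b}$ simplifies the three expressions to $\delta_1^2+|\tilde{u}|^2$, $\delta_2^2+|\tilde{v}|^2$ and $\tilde{u}\tilde{v}-a\ov{b}$. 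Specialising the $2$-positivity inequality to $\tilde{u}=\tilde{v}=0$ then gives $|a|^2|b|^2\leq\delta_1^2\delta_2^2$, and since $|a|=\veps_1(\eta_1,y_1)$, $|b|=\veps_2(\eta_2,y_2)$ under $C_i=0$, this is exactly \eqref{zui}. The $2$-copositive case runs in parallel with $B_i=0$ and $\phi(\zeta\xi^*)$ in the off-diagonal role; the analogous algebra produces $|c|^2|d|^2\leq\delta_1^2\delta_2^2$ with $c=\la y_1,C_1\ov{\eta_1}\ra$, $d=\la y_2,C_2\ov{\eta_2}\ra$ and $|c|=\veps_1$, $|d|=\veps_2$.

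\emph{Main obstacle.} The bulk of the work lies in the careful bookkeeping of the block transpositions $X_{13}^{\rT}$, $X_{31}^{\rT}$ and their $\cK_2$-analogues, which couple the $\cK_3$-crossed blocks of $X$ to the $C_i$ (and $B_i$) terms appearing in the merging formula. Once these are computed, both steps collapse to elementary $2\times 2$ scalar positivity combined with the completion of a quadratic form.
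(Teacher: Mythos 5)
Your proof is correct and follows essentially the same route as the paper: both arguments reduce to positive semidefiniteness of small scalar matrices extracted from $(\id_{\bM_2(\bC)}\otimes\phi)$ applied to rank-one elements, with the vanishing of $C_i$ forced by a zero diagonal entry and the inequality \eqref{zui} by a determinant condition (the paper computes a $3\times 3$ determinant $\mu_1^2\mu_2^2-\mu_1^2|b|^2-\mu_2^2|a|^2$, which your completion of the square turns into the equivalent $2\times 2$ condition $\delta_1^2\delta_2^2-|a|^2|b|^2\geq 0$). The only blemish is a harmless indexing slip in the copositive half of Step 1: the entry you need is the $(i,3)$ block $B_i\ov{\eta_i}$ of $\phi(\ov{\eta_i}\veps^*)$ rather than a $(3,i)$ block, but since the two off-diagonal scalar entries are complex conjugates the conclusion $B_i=0$ is unaffected.
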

\begin{proof}
Let us fix $i=1,2$ and let $\eta_i\in\cK_i$.
By $f_1,f_2$ denote an orthonormal basis in $\bC_2$ and consider an element $H\in \bM_2(\bC)\otimes B(\cK)$ given by
$$H=f_1^{}f_1^*\otimes \eps\eps^*+f_1^{}f_2^*\otimes \eps\eta_i^*+f_2^{}f_1^*\otimes \eta_i^{}\eps^* +f_2^{}f_2^*\otimes \eta_i^{}\eta_i^*$$
Clearly, it is positive because $H=(f_1^{}\otimes \eps+f_2^{}\otimes \eta_i^{})(f_1^{}\otimes \eps+f_2^{}\otimes \eta_i^{})^*$. 
Observe that
\be
\lefteqn{(\id_{\bM_2(\bC)}\otimes\phi)(H)=}\\
&=&
f_1^{}f_1^*\otimes ee^*+ f_1^{}f_2^*\otimes \big((C_i\ov{\eta_i})e^*+e(B_i\eta_i)^*\big)+ f_2^{}f_1^*\otimes \big((B_i\eta_i)e^*+e(C_i\ov{\eta_i})^*\big) \\
&&{}+ f_2^{}f_2^*\otimes \big(\phi_i(\eta_i^{}\eta_i^*)+\omega_i(\eta_i^{}\eta_i^*)P_j\big).
\ee
where $j=3-i$.
Further, let $z=f_1^{}\otimes C_i\ov{\eta_i}-f_2^{}\otimes e\in \bC^2\otimes \cH$. Then, one can check that
$$
\la z,(\id_{\bM_2(\bC)}\otimes\phi)(H)z\ra= -2\Vert C_i\ov{\eta_i}\Vert^2.
$$
It follows from 2-positivity of $\phi$ that  $C_i\ov{\eta_i}=0$. Since $\eta_i$ is arbitrary, $C_i=0$.

Now, let $\eta_i\in\cK_i$ and $y_i\in\cH_i$, $i=1,2$, be arbitrary. Consider an element $G\in \bM_2(\bC)\otimes B(\cK)=\bM_2(B(\cK))$ given in block-matrix form by
$$
G=\left(\ba{ccc|ccc}
\eta_1^{}\eta_1^* & 0 & 0 & 0 & \eta_1^{}\eta_2^* & \eta_1 \\
0 & 0 & 0 & 0 & 0 & 0 \\
0 & 0 & 0 & 0 & 0 & 0 \\ \hline
0 & 0 & 0 & 0 & 0 & 0 \\
\eta_2^{}\eta_1^* & 0 & 0 & 0 & \eta_2^{}\eta_2^* & \eta_2 \\
\eta_1^* & 0 & 0 & 0 & \eta_2^* & 1
\ea\right)
$$
It is positive, because $G=\rho\rho^*$ where $\rho\in \bC^2\otimes\cK=\cK\oplus \cK$ is equal to $\rho=\left( \eta_1 \; 0 \; 0\,  |\, 0 \; \eta_2 \; 1\right)^\rT.$ Further, note that
$$\id_{\bM_2(\bC)}\otimes \phi(G)=
\left(\ba{ccc|ccc}
\phi_1(\eta_1^{}\eta_1^*) & 0 & 0 & 0 & 0 & B_1\eta_1 \\
0 & \omega_1(\eta_1^{}\eta_1^*)P_1 & 0 & 0 & 0 & 0 \\
0 & 0 & 0 & 0 & 0 & 0 \\ \hline
0 & 0 & 0 & \omega_2(\eta_2^{}\eta_2^*) & 0 & 0 \\
0 & 0 & 0 & 0 & \phi_2(\eta_2^{}\eta_2^*) & B_2\eta_2 \\
(B_1\eta_1)^* & 0 & 0 & 0 & (B_2\eta_2)^* & 1
\ea\right)
$$
Since $\phi$ is $2$-positive, the above matrix is positive definite. Then, applying Proposition \ref{blockpos} to this matrix and coefficients of $\rho$, we conclude that the following scalar matrix is positive definite
$$
\left(\ba{ccc|ccc}
\la y_1,\phi_1(\eta_1^{}\eta_1^*)y_1\ra & 0 & 0 & 0 & 0 & \la y_1,B_1\eta_1\ra \\
0 & 0 & 0 & 0 & 0 & 0 \\
0 & 0 & 0 & 0 & 0 & 0 \\ \hline
0 & 0 & 0 & 0 & 0 & 0 \\
0 & 0 & 0 & 0 & \la y_2,\phi_2(\eta_2^{}\eta_2^*)y_2\ra & \la y_2,B_2\eta_2\ra \\
\la B_1\eta_1,y_1\ra & 0 & 0 & 0 & \la B_2\eta_2,y_2\ra & 1
\ea\right).
$$
Its determinant is equal to
\be
\lefteqn{\mu_1(\eta_1,y_1)^2 \mu_2(\eta_2,y_2)^2 -\mu_1(\eta_1,y_1)^2 \veps_2(\eta_2,y_2)^2 -\mu_2(\eta_2,y_2)^2 \veps_1(\eta_1,y_1)^2=} \\
&=&\big(\veps_1(\eta_1,y_1)^2 +\delta_1(\eta_1,y_1)^2\big)\big(\veps_2(\eta_2,y_2)^2 +\delta_2(\eta_2,y_2)^2\big)\\ && {}- \big(\veps_1(\eta_1,y_1)^2 +\delta_1(\eta_1,y_1)^2\big) \veps_2(\eta_2,y_2)^2 
-\big(\veps_2(\eta_2,y_2)^2 +\delta_2(\eta_2,y_2)^2\big)\veps_1(\eta_1,y_1)^2 \\
&=& \delta_1(\eta_1,y_1)\delta_2(\eta_2,y_2) - \veps_1(\eta_1,y_1)\veps_2(\eta_2,y_2)
\ee
Since it is nonnegative, inequality \eqref{zui} follows.
\end{proof}
\begin{remark}
In Theorem \ref{t:3cp} it will be shown that in the special case $\cK_i=\cH_i=\bC$, $i=1,2$, the converse implication is also true. Moreover, 2-positivity is equivalent to complete positivity.
\end{remark}

\subsection{Canonical merging of a pair of positive maps}
The aim of this subsection is to show that for two maps $\phi_1$ and $\phi_2$ which allow condition (2) of Proposition \ref{mergprop} for nonzero $\psi_i$ or $\chi_i$, it is possible nontrivial positive merging. Moreover, it results in interesting examples of nondecomposable positive maps.

Before we formulate the main result of this subsection let us recall some properties of $2$-positive and $2$-copositive maps. Let $\phi:B(\mathfrak{k})\to B(\mathfrak{h})$ be a nonzero positive map, where $\mathfrak{k}$, $\mathfrak{h}$ are some Hilbert spaces.
Since $\phi$ is nonzero, there are normalized, vectors $\xi\in\mathfrak{k}$ and $x\in\mathfrak{h}$ such that for some positive number $\lambda$,
\beq
\phi(\xi\xi^*)x=\lambda x.
\eeq
Define two operators $B,C:\mathfrak{k}\to\mathfrak{h}$ by
\beq
\label{B0}
B\eta=\lambda^{-1/2}\phi(\eta\xi^*)x,\qquad\eta\in\cK,
\eeq
\beq
\label{C0}
C\eta=\lambda^{-1/2}\phi(\xi\ov{\eta}^*)x,\qquad\eta\in\cK,
\eeq
and maps $\psi,\chi:B(\mathfrak{k})\to B(\mathfrak{h})$ by
\beq
\psi(X)=BXB^*,\qquad
\chi(X)=CX^\rT C^*,\qquad\qquad X\in B(\mathfrak{k}).
\eeq
It was shown by one of the authors in \cite{MM10} (see also \cite{Sto_pos}) that if $\phi$ is 2-positive (respectively 2-copositive), then $\psi\leq \phi$ (respectively $\chi\leq \phi$).

Now, let us come back to the pair of maps $\phi_i:B(\cK_i)\to B(\cH_i)$, $i=1,2$, where $\cK_i$, $\cH_i$ are finite dimensional Hilbert spaces. We assume that both are nonzero, hence for $i=1,2$, there are normalized vectors $\xi_i\in\cK_i$ and $x_i\in\cH_i$ such that for positive constants $\lambda_i$,
\beq
\label{lambdai}
\phi_i^{}(\xi_i^{}\xi_i^*)x_i^{}=\lambda_i^{} x_i^{}.
\eeq
Let $B_1:\cK_1\to\cH_1$ and $C_2:\cK_2\to\cH_2$ be given as in \eqref{B0} and \eqref{C0} by
\beq
\label{B1}
B_1\eta_1=\lambda_1^{-1/2}\phi_1(\eta_1^{}\xi_1^*)x_1,\qquad\eta_1\in\cK_1,
\eeq
\beq
\label{C2}
C_2\eta_2=\lambda_2^{-1/2}\phi_2(\xi_2^{}\ov{\eta_2^{}}^*)x_2,\qquad\eta\in\cK_2.
\eeq
Further, let $C_1:\cK_1\to\cH_1$ and $B_2:\cK_2\to\cH_2$ be zero operators. Finally, let functionals $\omega_i$ on $B(\cK_i)$, $i=1,2$, be defined by
\beq
\label{om1}
\omega_1(X)=\Tr (B_1XB_1^*),\qquad X\in B(\cK_1),
\eeq
\beq
\label{om2}
\omega_2(X)=\Tr (C_2X^\rT C_2^*),\qquad X\in B(\cK_2),
\eeq
\begin{defi}
\label{canon}
A \textit{canonical merging} of positive maps $\phi_1$ and $\phi_2$ is a merging by means of $B_i,C_i,\omega_i$ described above.
\end{defi}
\begin{thm}
\label{2pos2copos}
If $\phi_1$ is a $2$-positive map and $\phi_2$ is a $2$-copositive one, then the canonical merging $\phi$ of $\phi_1$ and $\phi_2$ is a positive nondecomposable map.
\end{thm}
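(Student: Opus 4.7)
The proof splits into two independent parts: positivity, verified via the criterion of Theorem \ref{t:pos}, and nondecomposability, witnessed by an explicit entangled PPT operator $Z\in\mathbb{T}$ with $\la\phi,Z\ra_\rd<0$.

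For positivity, I would verify the three conditions of Theorem \ref{t:pos}. Condition (i) is immediate because $\omega_1(X)=\Tr(B_1XB_1^*)$ and $\omega_2(X)=\Tr(C_2X^\rT C_2^*)$ are manifestly positive functionals. Condition (ii) is the essential input and follows from the $2$-positivity/$2$-copositivity hypothesis via the result of \cite{MM10} recalled just before Definition \ref{canon}: $2$-positivity of $\phi_1$ gives $\psi_1\leq\phi_1$, where $\psi_1(X)=B_1XB_1^*$, and $2$-copositivity of $\phi_2$ gives $\chi_2\leq\phi_2$, where $\chi_2(X)=C_2X^\rT C_2^*$. Together with $C_1=B_2=0$, these yield $|\la y_1,B_1\eta_1\ra|^2\leq\la y_1,\phi_1(\eta_1\eta_1^*)y_1\ra$ and $|\la y_2,C_2\ov{\eta_2}\ra|^2\leq\la y_2,\phi_2(\eta_2\eta_2^*)y_2\ra$, i.e.\ $\varepsilon_i\leq\mu_i$. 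For condition (iii), compute $\omega_1(\eta_1\eta_1^*)=\|B_1\eta_1\|^2$ and $\omega_2(\eta_2\eta_2^*)=\|C_2\ov{\eta_2}\|^2$, and observe that $\ran B_1\subset\ran P_1$ and $\ran C_2\subset\ran P_2$ (a standard consequence of $2\times 2$-block positivity applied to $\phi_1$ and to $\phi_2$ composed with transposition). Cauchy--Schwarz then gives
$$\varepsilon_1(\eta_1,y_1)\varepsilon_2(\eta_2,y_2)=|\la P_1y_1,B_1\eta_1\ra|\,|\la P_2y_2,C_2\ov{\eta_2}\ra|\leq\sigma_1(\eta_1,y_2)\,\sigma_2(\eta_2,y_1),$$
so \eqref{e:posineq} holds even without invoking the $\delta_i$-terms.

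For nondecomposability, I would construct a PPT witness modelled on the one used in \cite{MO} for the Miller--Olkiewicz map recovered in Example \ref{e:MO}. The natural ansatz begins with the rank-one operator $\omega\omega^*$ where
$$\omega=\alpha\,\xi_1\otimes x_1+\beta\,\xi_2\otimes\ov{x_2}+\gamma\,\varepsilon\otimes e\in\cK\otimes\cH;$$
the conjugation $\ov{x_2}$ reflects the transposition in $\phi_2$ and is what makes $(\omega\omega^*)^\Gamma$ tractable. I would then set $Z=\omega\omega^*+Z_0$, with $Z_0\in\bS$ a separable correction chosen so that $Z\geq 0$, $Z^\Gamma\geq 0$, and $Z\notin\bS$. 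Evaluating $\la\phi,Z\ra_\rd$ via the block structure of the merging formula \eqref{merging}, the diagonal blocks contribute nonnegatively (controlled by $\mu_i^2$, $\omega_i$ and $|\gamma|^2$), while the off-diagonal blocks of $\phi$ produce cross-terms involving $B_1\xi_1$ and $C_2\ov{\xi_2}$. By \eqref{lambdai}, \eqref{B1} and \eqref{C2} these reduce to $B_1\xi_1=\lambda_1^{1/2}x_1$ and $C_2\ov{\xi_2}=\lambda_2^{1/2}x_2$, and with the right phase choice for $\alpha,\beta,\gamma$ they produce a strictly negative term proportional to $\lambda_1^{1/2}\lambda_2^{1/2}$.

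The main obstacle is the simultaneous fulfilment of $Z\in\mathbb{T}\setminus\bS$ and $\la\phi,Z\ra_\rd<0$: the positive corrections $Z_0$ forced on us by the PPT requirement must remain small enough that they do not swamp the negative cross-terms, and $Z$ must be genuinely entangled so that the witness really detects nondecomposability rather than only failure of complete positivity. I expect this delicate balancing in the parameters $\alpha,\beta,\gamma$ and $\lambda_1,\lambda_2$ to reduce, as in \cite{MO}, to an elementary scalar inequality that can be solved by a continuity/open-condition argument, since the target inequality is strict and therefore stable under the small perturbations imposed by $Z^\Gamma\geq 0$.
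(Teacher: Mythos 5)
Your positivity argument is correct and is essentially the paper's own: condition (i) of Theorem \ref{t:pos} is immediate from \eqref{om1}--\eqref{om2}, condition (ii) follows from $\psi_1\leq\phi_1$ and $\chi_2\leq\phi_2$ (the result of \cite{MM10}), and condition (iii) follows from $B_1=P_1B_1$, $C_2=P_2C_2$ and Cauchy--Schwarz, exactly as you write, with the $\delta_i$-terms discarded.

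The nondecomposability half, however, contains a genuine flaw and not merely an omitted computation. In your ansatz $\omega=\alpha\,\xi_1\otimes x_1+\beta\,\xi_2\otimes\ov{x_2}+\gamma\,\varepsilon\otimes e$, the cross-terms between the second and third summands pair to \emph{zero} against the merging. Indeed, since $B_2=0$, the operator $\phi(\xi_2\varepsilon^*)$ is supported entirely in the $(3,2)$ block of $B(\cH)$ (it equals $e(C_2\ov{\xi_2})^*$), while $(\ov{x_2}\,e^*)^\rT=ex_2^*$ also lies in the $(3,2)$ block; the product of two operators both mapping $\cH_2$ into $\cH_3$ vanishes, so $\Tr\bigl(\phi(\xi_2\varepsilon^*)(\ov{x_2}\,e^*)^\rT\bigr)=0$, and symmetrically for the adjoint cross-term. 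The $\alpha$--$\beta$ cross-terms also vanish because $\phi(\xi_1\xi_2^*)=0$ by \eqref{merging}. Consequently, for any separable correction $Z_0$ one gets
$\Tr\bigl(\cC_\phi^\rT(\omega\omega^*+Z_0)\bigr)\geq \bigl(|\alpha|\lambda_1^{1/2}-|\gamma|\bigr)^2+|\beta|^2\la x_2,\phi_2(\xi_2\xi_2^*)x_2\ra\geq 0$,
so this $\omega$ can never witness nondecomposability; no balancing of parameters will help. The transposition in $\phi_2$ forces the entangled piece for the copositive half to be built on the \emph{swapped} product vectors $\xi_2\otimes e$ and $\varepsilon\otimes x_2$, not on $\xi_2\otimes\ov{x_2}$ and $\varepsilon\otimes e$. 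This is exactly what the paper does: it takes
\begin{align*}
Z&=\lambda_1^{-1}(\xi_1\otimes e)(\xi_1\otimes e)^*+\lambda_2^{-1}(\xi_2\otimes x_2)(\xi_2\otimes x_2)^*+(\varepsilon\otimes x_1)(\varepsilon\otimes x_1)^*\\
&\quad{}+\bigl(\lambda_1^{-1/2}\xi_1\otimes x_1-\varepsilon\otimes e\bigr)\bigl(\lambda_1^{-1/2}\xi_1\otimes x_1-\varepsilon\otimes e\bigr)^*\\
&\quad{}+\bigl(\lambda_2^{-1/2}\xi_2\otimes e-\varepsilon\otimes x_2\bigr)\bigl(\lambda_2^{-1/2}\xi_2\otimes e-\varepsilon\otimes x_2\bigr)^*,
\end{align*}
for which $Z^\Gamma$ admits an analogous explicit rank-one decomposition (so no perturbative or continuity argument is needed), and a direct evaluation gives $\Tr(\cC_\phi^\rT Z)=1+1-2-2+1=-1$. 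A minor additional remark: you need not verify $Z\notin\bS$ separately, since $\Tr(\cC_\phi^\rT Z)<0$ together with positivity of $\phi$ already implies $Z$ is entangled.
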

\begin{proof}
Firstly, we will show positivity of $\phi$.
We will use the criterion of Theorem \ref{t:pos}. Obviously, condition (i) is satisfied.
Let $\eta_i\in\cK_i$, $y_i\in\cH_i$, where $i=1,2$, be arbitrary vectors. Then
$$
\varepsilon_1(\eta_1,y_1)=|\la y_1,B_1\eta_1\ra| ,\qquad\qquad \varepsilon_2(\eta_2,y_2)=|\la y_2,C_2\ov{\eta_2}\ra |.
$$
Observe that
\be
\veps_1(\eta_1,y_1)&=&|\la y_1,B_1\eta_1\ra|=\la y_1,(B_1\eta_1)(B_1\eta_1)^*y_1\ra^{1/2}=
\la y_1^{},B_1^{}\eta_1^{}\eta_1^*B_1^*y_1^{}\ra^{1/2}= \\
&=& \la y_1^{},\psi_1^{}(\eta_1^{}\eta_1^*)y_1^{}\ra^{1/2}\leq
\la y_1^{},\phi_1^{}(\eta_1^{}\eta_1^*)y_1^{}\ra^{1/2}=\mu_1(\eta_1,y_1)
\ee
where the inequality follows from the result of \cite{MM10}. Similarly one can show that $\veps_2(\eta_2,y_2)\leq \mu_2(\eta_2,y_2)$. Thus, condition (ii) of Theorem \ref{t:pos} is fulfilled. Further, we have
$$
\sigma_1(\eta_1,y_2)=\Vert B_1\eta_1\Vert\Vert P_2y_2\Vert ,\qquad\qquad
\sigma_2(\eta_2,y_1)=\Vert C_2\ov{\eta_2}\Vert\Vert P_1 y_1\Vert
$$
Recall that $P_i$ is a projection onto the range of $\phi_i(\jed_{B(\cK_i)})$. Since $B_1^{}B_1^*\leq \phi_1(\jed_{B(\cK_1)})$ and $C_2^{}C_2^*\leq \phi_2(\jed_{B(\cK_2)})$, we have $B_1=P_1B_1$ and $C_2=P_2C_2$. Thus,
\be
\veps_1(\eta_1,y_1)\,\veps_2(\eta_2,y_2)&=&|\la y_1,B_1\eta_1\ra||\la y_2,C_2\ov{\eta_2}\ra |=|\la P_1y_1,B_1\eta_1\ra||\la P_2y_2,C_2\ov{\eta_2}\ra | \\
&\leq &
\Vert P_1y_1\Vert \Vert B_1\eta_1\Vert \Vert P_2y_2\Vert \Vert C_2\ov{\eta_2}\Vert
=
\sigma_1(\eta_1,y_2)\,\sigma_2(\eta_2,y_1).
\ee
Hence, the condition (iii) of Theorem \ref{t:pos} holds and positivity of $\phi$ follows.

In order to prove nondecomposability we will show that the map $\phi$ is an entanglement witness for some PPT operator, i.e. there is $Z\in \mathbb{T}$ such that $\Tr(\cC_\phi^\rT Z)<0$. To this end let us fix some orthonormal bases $\cE_\cK$ and $\cE_\cH$ of spaces $\cK$ and $\cH$ respectively, in such a way that
$$\cE_\cK=(\eps_{1,1},\ldots,\eps_{1,k_1},\eps_{2,1},\ldots,\eps_{2,k_2},\eps),\qquad
\cE_\cH=(e_{1,1},\ldots,e_{1,l_1},e_{2,1},\ldots,e_{2,l_2},e),
$$
where $(\eps_{i,1},\ldots,\eps_{i,k_i})$ and $(e_{i,1},\ldots,e_{i,l_i})$ are bases of $\cK_i$ and $\cH_i$ respectively, $i=1,2$, and $\cK_3=\bC \eps$, $\cH_3=\bC e$. Moreover, we assume that $\eps_{i,1}=\xi_i$ and $e_{i,1}=x_i$ for $i=1,2$ and elements of $\cE_\cK$ and $\cE_\cH$ are real vectors, i.e. they are invariant with respect to antilinear involutions.

Define $Z\in B(\cK)\otimes B(\cH)$ by
\be
Z&=&\lambda_1^{-1}\, \eps_{1,1}^{}\eps_{1,1}^*\otimes (e_{1,1}^{}e_{1,1}^*+ee^*) + \lambda_2^{-1}\, \eps_{2,1}^{}\eps_{2,1}^*\otimes (e_{2,1}^{}e_{2,1}^*+ee^*) \\
&&{} -\lambda_1^{-1/2}(\eps_{1,1}\eps^*\otimes e_{1,1}e^*+\eps\eps_{1,1}^*\otimes ee_{1,1}^*) -\lambda_2^{-1/2}(\eps_{2,1}^{}\eps^*\otimes ee_{2,1}^*+\eps\eps_{2,1}^*\otimes e_{2,1}e^*) \\
&&{}+\eps\eps^*\otimes (e_{1,1}^{}e_{1,1}^*+e_{2,1}^{}e_{2,1}^*+ee^*)
\ee
It is positive because it can be decomposed in the form
\be
Z&=&\lambda_1^{-1}\, (\eps_{1,1}\otimes e)(\eps_{1,1}\otimes e)^* + \lambda_2^{-1}(\eps_{2,1}\otimes e_{2,1})(\eps_{2,1}\otimes e_{2,1})^* + (\eps\otimes e_{1,1})(\eps\otimes e_{1,1})^* \\
&&{}+(\lambda_1^{-1/2}\eps_{1,1}\otimes e_{1,1}-\eps\otimes e)(\lambda_1^{-1/2}\eps_{1,1}\otimes e_{1,1}-\eps\otimes e)^* \\
&&{}+(\lambda_2^{-1/2}\eps_{2,1}\otimes e-\eps\otimes e_{2,1})(\lambda_2^{-1/2}\eps_{2,1}\otimes e-\eps\otimes e_{2,1})^*
\ee
In a simillar way we get
\be
Z^\Gamma&=&\lambda_1^{-1}\, (\eps_{1,1}\otimes e_{1,1})(\eps_{1,1}\otimes e_{1,1})^* + \lambda_2^{-1}(\eps_{2,1}\otimes e)(\eps_{2,1}\otimes e)^* + (\eps\otimes e_{2,1})(\eps\otimes e_{2,1})^* \\
&&{}+(\lambda_1^{-1/2}\eps_{1,1}\otimes e-\eps\otimes e_{1,1})(\lambda_1^{-1/2}\eps_{1,1}\otimes e-\eps\otimes e_{1,1})^* \\
&&{}+(\lambda_2^{-1/2}\eps_{2,1}\otimes e_{2,1}-\eps\otimes e)(\lambda_2^{-1/2}\eps_{2,1}\otimes e_{2,1}-\eps\otimes e)^*
\ee
Therefore, $Z$ is a PPT operator.
We will show that $\Tr(\cC_\phi^\rT Z)<0$. Firstly, let us note that $Z^\rT=Z$, hence
\beg
\Tr(\cC_\phi^\rT Z)&=&\Tr(\cC_\phi Z)=\label{TrCZ}\\
&=&
\lambda_1^{-1}(\la e_{1,1},\phi(\eps_{1,1}^{}\eps_{1,1}^*)e_{1,1}^{}\ra
+ \la e,\phi(\eps_{1,1}^{}\eps_{1,1}^*)e\ra) \nonumber \\
&&{}+ \lambda_2^{-1}(\la e_{2,1}^{},\phi(\eps_{2,1}^{}\eps_{2,1}^*)e_{2,1}^{}\ra+ \la e,\phi(\eps_{2,1}^{}\eps_{2,1}^*)e\ra) \nonumber\\
&& {}-\lambda_1^{-1/2}(\la e_{1,1}^{},\phi(\eps_{1,1}^{}\eps^*)e\ra +\la e,\phi(\eps\eps_{1,1}^*)e_{1,1}^{}\ra) \nonumber\\
&&{}- \lambda_2^{-1/2}(\la e^{},\phi(\eps_{2,1}^{}\eps^*)f_{2,1}\ra +\la e_{2,1}^{},\phi(\eps\eps_{2,1}^*)e\ra) \nonumber\\
&&{}+
\la e_{1,1}^{},\phi(\eps\eps^*)e_{1,1}^{}\ra+\la e_{2,1}^{},\phi(\eps\eps^*)e_{2,1}^{}\ra + \la e,\phi(\eps\eps^*)e\ra \nonumber
\eeg
Secondly, observe that
\beg
\phi(\eps_{1,1}^{}\eps_{1,1}^*)&=&\left(\ba{ccc}
\phi_1(\eps_{1,1}^{}\eps_{1,1}^*) & 0 & 0 \\
0 & \Vert B_1\eps_{1,1}\Vert^2 P_2 & 0 \\
0 & 0 & 0
\ea\right),\\
\phi(\eps_{2,1}^{}\eps_{2,1}^*)&=&\left(\ba{ccc}
\Vert C_2\eps_{2,1}\Vert^2 P_2 & 0 & 0 \\
0 & \phi_2(\eps_{2,1}^{}\eps_{2,1}^*) & 0 \\
0 & 0 & 0
\ea\right),
\eeg
\beq
\phi(\eps_{1,1}^{}\eps^*)=\left(\ba{ccc}
0 & 0 & B_1\eps_{1,1} \\
0 & 0 & 0 \\
0 & 0 & 0
\ea\right),\quad
\phi(\eps_{2,1}^{}\eps^*)=\left(\ba{ccc}
0 & 0 & 0 \\
0 & 0 & C_2\ov{\eps_{2,1}} \\
0 & 0 & 0
\ea\right),\quad
\phi(\eps\eps^*)=\left(\ba{ccc}
0 & 0 & 0 \\
0 & 0 & 0 \\
0 & 0 & 1
\ea\right)
\eeq
Therefore, having in mind \eqref{lambdai}, we are lead to
\beq
\la e_{i,1}^{},\phi(\eps_{i,1}^{}\eps_{i,1}^*)e_{i,1}^{}\ra=\la x_i,\phi_i(\xi_i^{}\xi_i^*)x_i\ra=\lambda_i,\qquad i=1,2,
\eeq
\beq
\la e,\phi(\eps_{i,1}^{}\eps_{i,1}^*)e\ra =0, \qquad i=1,2.
\eeq
Taking into account also \eqref{B1} and \eqref{C2} we get
\beq
\la e_{1,1}^{},\phi(\eps_{1,1}^{}\eps^*)e\ra=\la e_{1,1}^{},B_1^{}\eps_{1,1}^{}\ra = \la x_1^{},B_1^{}\xi_1^{}\ra=\lambda_1^{-1/2}\la x_1^{},\phi_1^{}(\xi_1^{}\xi_1^*)x_1^{}\ra=\lambda_1^{1/2}
\eeq
\beq
\la e_{2,1}^{},\phi(\eps^{}\eps_{2,1}^*)e\ra=\la e_{2,1}^{},C_2^{}\ov{\eps_{2,1}^{}}\ra = \la x_2^{},C_2^{}\ov{\xi_2^{}}\ra=\lambda_2^{-1/2}\la x_2^{},\phi_2^{}(\xi_2^{}\xi_2^*)x_2^{}\ra=\lambda_2^{1/2}
\eeq
We have also
$$
\la e,\phi(\eps\eps^*)e\ra =1\quad\qquad\textrm{and}\quad\qquad
\left\la e_{i,1}^{},\phi(\eps\eps^*)e_{i,1}^{}\right\ra=0,\quad i=1,2.
$$
Finally, it follows from \eqref{TrCZ} that $\Tr(\cC_\phi^\rT Z)=-1$.
\end{proof}

The above theorem provides a useful tool for constructing examples of nondecomposable maps. Let us illustrate it by the following example.
\begin{example}
For $\gamma\geq 0$, define $\psi_\gamma:B(\bC^k)\to B(\bC^k)$ by
\beq
\label{psig}
\psi_\gamma(X)=(\gamma+1)\Tr(X)\jed -X
\eeq
It is known (\cite{Sto_ks}) that $\psi_\gamma$ is $2$-positive if and only if $\gamma\geq 1$. We will consider merging of maps $\phi_1:B(\bC^{k_1})\to B(\bC^{k_1})$ and $\phi_2:B(\bC^{k_2})\to B(\bC^{k_2})$ given by
$$ 
\phi_1(X)=(\gamma_1+1)\Tr(X)\jed-X,\quad X\in B(\bC^{k_1}),
$$ 
$$ 
\phi_2(X)=(\gamma_2+1)\Tr(X)\jed-X^\rT,\quad X\in B(\bC^{k_2}).
$$ 
Clearly, if $\gamma_1\geq 1$ and $\gamma_2\geq 1$, then $\phi_1$ is 2-positive and $\phi_2$ is 2-copositive. Therefore, we can apply Theorem \ref{2pos2copos}.
To this end we should specify vectors $\xi_1,x_1\in\bC^{k_1}$ and $\xi_2,x_2\in\bC^{k_2}$ satisfying condition \eqref{lambdai}. For $i=1,2$, let $e_{i,1},\ldots,e_{i,k_i}$ be the standard orthonormal basis of $\bC^{k_i}$ and let $e$ be a unit vector spanning $\bC$. Hence $e_{1,1},\ldots,e_{1,k_1},e_{2,1},\ldots,e_{2,k_2},e$ form an orthonormal basis for $\bC^{k_1}\oplus\bC^{k_2}\oplus\bC$. We assume that the vectors are invariant with respect to the antilinear involution. Let $\xi_i=x_i=e_{i,1}$ for $i=1,2$. Since $\phi_i(e_{i,1}^{}e_{i,1}^*)=(\gamma_i+1)\jed_{k_i} - e_{i,1}^{}e_{i,1}^*$, we have $\phi_i(e_{i,1}^{}e_{i,1}^*)e_{i,1}^{}=\gamma_i^{}e_{i,1}^{}$, hence $\lambda_i^{}=\gamma_i^{}$. One checks that
$$B_1=\gamma_1^{1/2}e_{1,1}^{}e_{1,1}^*-\gamma_1^{-1/2}(\jed_{k_1}^{}-e_{1,1}^{}e_{1,1}^*),\qquad
C_2=\gamma_2^{1/2}e_{2,1}^{}e_{2,1}^*-\gamma_2^{-1/2}(\jed_{k_2}^{}-e_{2,1}^{}e_{2,1}^*).
$$
Hence
$$
B_1^{}B_1^*=\gamma_1^{-1}\jed_{k_1}+(\gamma_1^{}-\gamma_1^{-1})e_{1,1}^{}e_{1,1}^*,
\qquad
C_1^{}C_1^*=\gamma_2^{-1}\jed_{k_2}+(\gamma_2^{}-\gamma_2^{-1})e_{2,1}^{}e_{2,1}^*
$$
and functionals $\omega_1$, $\omega_2$ defined in \eqref{om1}, \eqref{om2} are given by
$$
\omega_1(X)=\gamma_1^{-1}\Tr(X)+(\gamma_1^{}-\gamma_1^{-1})\la e_{1,1},Xe_{1,1}\ra,\qquad X\in B(\bC^{k_1}),
$$
$$
\omega_2(X)=\gamma_2^{-1}\Tr(X)+(\gamma_2^{}-\gamma_2^{-1})\la e_{2,1},Xe_{2,1}\ra,\qquad X\in B(\bC^{k_2}).
$$
Therefore, if $\phi:B(\bC^{k_1}\oplus\bC^{k_2}\oplus\bC)\to B(\bC^{k_1}\oplus\bC^{k_2}\oplus\bC)$ is the map constructed in Theorem \ref{2pos2copos}, then for $X\in B(\bC^{k_1}\oplus\bC^{k_2}\oplus\bC)$ given in the block form $X=(X_{ij})$,
\beg
\label{gammas}
\lefteqn{\phi(X)=}\\
&=&
\left(\ba{ccc}
r_1\jed_{k_1}-X_{11} & 0 & \iota(\gamma_1)(X_{13})_1^{} e_{1,1}-\gamma_1^{-1/2}X_{13} \\
0 & r_2\jed_{k_2}-X_{22}^\rT & \iota(\gamma_2)(X_{32}^\rT)_1^{} e_{2,1}-\gamma_2^{-1/2}X_{32}^\rT \\
\iota(\gamma_1)(X_{31}^\rT)_1^{} e_{1,1}^*-\gamma_1^{-1/2}X_{31} & \iota(\gamma_2)(X_{23})_1^{} e_{2,1}^*-\gamma_2^{-1/2}X_{23}^\rT & X_{33}
\ea\right)\nonumber
\eeg
where $\iota(x)=x^{1/2}+x^{-1/2}$ for $x>0$, $(\eta)_1$ denotes the first coordinate of a vector $\eta\in\bC^{k_i}$ and
$$
r_1=(\gamma_1+1)\Tr(X_{11})+\gamma_2^{-1}\Tr(X_{22})+ (\gamma_2-\gamma_2^{-1})\la e_{2,1},X_{22}e_{2,1}\ra
$$
$$
r_2=(\gamma_2+1)\Tr(X_{22})+\gamma_1^{-1}\Tr(X_{11})+ (\gamma_1-\gamma_1^{-1})\la e_{1,1},X_{11}e_{1,1}\ra
$$
\end{example}
\begin{cor}
\label{wn22}
If $\gamma_1\geq 1$ and $\gamma_2\geq 1$, then the map $\phi : B(\bC^{k_1+k_2+1})\to B(\bC^{k_1+k_2+1})$ given by \eqref{gammas} is positive and nondecomposable.
\end{cor}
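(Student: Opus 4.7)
The plan is to invoke Theorem \ref{2pos2copos} directly on the pair $(\phi_1,\phi_2)$ built in the preceding example. Two verifications are needed, neither of which presents serious difficulty, since the bulk of the work has already been done.

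First, I would confirm the hypotheses on positivity orders. By the result cited from \cite{Sto_ks}, the map $\psi_\gamma$ defined in \eqref{psig} is $2$-positive precisely when $\gamma\geq 1$, so $\phi_1 = \psi_{\gamma_1}$ is $2$-positive under $\gamma_1 \geq 1$. For $\phi_2$, observe that $\phi_2(X) = \psi_{\gamma_2}(X^\rT)$ because $\Tr(X) = \Tr(X^\rT)$; equivalently $\phi_2 = \psi_{\gamma_2} \circ \mathrm{tran}$. Composing on the right with the transposition interchanges $k$-positivity and $k$-copositivity for every $k$, so $\phi_2$ is $2$-copositive when $\gamma_2 \geq 1$.

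Second, I would verify that the map $\phi$ displayed in \eqref{gammas} is indeed the canonical merging of this pair in the sense of Definition \ref{canon}. The calculations in the example above already identify the normalized eigenvectors $\xi_i = x_i = e_{i,1}$ with eigenvalues $\lambda_i = \gamma_i$, derive $B_1$ and $C_2$ from \eqref{B1}, \eqref{C2}, set $C_1 = B_2 = 0$, and record the functionals $\omega_1, \omega_2$ via \eqref{om1}, \eqref{om2}. Plugging these ingredients into the general merging formula \eqref{merging} and collecting terms reproduces \eqref{gammas}; the only point to be careful about is the column-vector/row-functional identification in the off-diagonal blocks, where, for instance, $B_1 X_{13} + C_1 X_{31}^\rT$ simplifies to $\iota(\gamma_1)(X_{13})_1\, e_{1,1} - \gamma_1^{-1/2} X_{13}$ after using $C_1 = 0$ and the explicit form $B_1 = \gamma_1^{1/2}\, e_{1,1}^{} e_{1,1}^* - \gamma_1^{-1/2}(\jed_{k_1} - e_{1,1}^{} e_{1,1}^*)$.

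Once these two points are in place, Theorem \ref{2pos2copos} applies verbatim and yields both positivity and nondecomposability of $\phi$. I do not expect any real obstacle: the corollary simply repackages the worked example as an instance of the general theorem, and the delicate step — exhibiting an explicit PPT operator $Z$ with $\Tr(\cC_\phi^\rT Z) < 0$ witnessing nondecomposability — has already been carried out abstractly in the proof of Theorem \ref{2pos2copos}.
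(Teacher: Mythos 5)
Your proposal is correct and follows exactly the route the paper takes: the corollary is an immediate consequence of the preceding example, which identifies $\phi_1=\psi_{\gamma_1}$ as $2$-positive and $\phi_2=\psi_{\gamma_2}\circ\mathrm{tran}$ as $2$-copositive for $\gamma_i\geq 1$, verifies that \eqref{gammas} is their canonical merging, and invokes Theorem \ref{2pos2copos}. Your extra remark that precomposition with the transposition swaps $k$-positivity and $k$-copositivity is a correct justification of the step the paper labels ``clearly.''
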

\begin{example}
For illustration, let us describe precisely the previous example for $k_1=k_2=3$ and $\gamma_1=\gamma_2=\gamma\geq 1$. In this case we obtain a map $\phi:\bM_7(\bC)\to\bM_7(\bC)$ given by
$$
\phi(X)=\left(\ba{ccccccc}
R-x_{11} & -x_{12} & -x_{13} & 0 & 0 & 0 & \gamma^{1/2} x_{17} \\
-x_{21} & R-x_{22} & -x_{23} & 0 & 0 & 0 & -\gamma^{-1/2} x_{27} \\
-x_{31} & -x_{32} & R-x_{33} & 0 & 0 & 0 & -\gamma^{-1/2} x_{37} \\
0 & 0 & 0 & Q-x_{44} & -x_{54} & -x_{64} & \gamma^{1/2} x_{74} \\
0 & 0 & 0 & -x_{45} & Q-x_{55} & -x_{65} & -\gamma^{-1/2} x_{75} \\
0 & 0 & 0 & -x_{46} & -x_{56} & Q-x_{66} & -\gamma^{-1/2} x_{76} \\
\gamma^{1/2}x_{71} & -\gamma^{-1/2}x_{72} & -\gamma^{-1/2}x_{73} & \gamma^{1/2}x_{47} & -\gamma^{-1/2}x_{57} & -\gamma^{-1/2}x_{67} & x_{77}
\ea\right)
$$
where
$$R=(\gamma+1)(x_{11}+x_{22}+x_{33})+\gamma x_{44}+\gamma^{-1}(x_{55}+x_{66})$$
$$Q=\gamma x_{11}+\gamma^{-1}(x_{22}+x_{33})+(\gamma+1)(x_{44}+x_{55}+x_{66})$$
By Corollary \ref{wn22} the above map is positive and nondecomposable.
\end{example}

\section{Special classes of merging}
The purpose of this section is to analyze maps $\phi_{\cK_1,\cK_2}$, $\Lambda_{\cK_1,\cK_2}$, $\Omega_{\cK_1,\cK_2}$ (and their variations) described in Examples \ref{e:MOgen}, \ref{e:RSCgen} and \ref{e:RSCgen2} which are obtained as merging of some specific positive maps.
\subsection{Examples of exposed positive maps}
Suppose that linear operators $A_1:\cK_1\to\cH_1$ and $A_2:\cK_2\to\cH_2$ are given.
Let $\phi_1:B(\cK_1)\to B(\cH_1)$ and $\phi_2:B(\cK_2)\to B(\cH_2)$ be maps given by
\beq
\label{e:phi12}
\phi_1(X)=A_1XA_1^*,\quad X\in B(\cK_1),
\qquad\qquad
\phi_2(X)=A_2X^\rT A_2^*,\quad X\in B(\cK_2).
\eeq
It was shown in \cite{MM} (see also \cite{YH}) that $\phi_1$ and $\phi_2$ are exposed elements in the cones $\fP(\cK_1,\cH_1)$ and $\fP(\cK_2,\cH_2)$ respectively.
Our aim is to show the following
\begin{thm}\label{main}
For any finite dimensional Hilbert spaces $\cK_1,\cK_2,\cH_1,\cH_2$ and any pair of nonzero operators $A_1\in B(\cK_1,\cH_1)$ and $A_2\in B(\cK_2,\cH_2)$ the canonical merging $\phi$ of maps $\phi_1$ and $\phi_2$ given by \eqref{e:phi12} (cf. Definition \ref{canon}) is an exposed positive map.
\end{thm}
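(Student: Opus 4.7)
The plan is to invoke the exposedness criterion \eqref{expo}: fix an arbitrary $\psi \in \bP(\cK,\cH)$ with the property that $\la y,\phi(\eta\eta^*)y\ra = 0$ implies $\la y,\psi(\eta\eta^*)y\ra = 0$ for every $(\eta,y) \in \cK \times \cH$, and show $\psi \in \bR_+\phi$. First I would unwind the canonical merging of $\phi_1(X)=A_1XA_1^*$ and $\phi_2(X)=A_2X^\rT A_2^*$: Definition \ref{canon} combined with \eqref{B1}--\eqref{om2} simplifies to $B_1 = A_1$, $C_2 = A_2$, $B_2 = C_1 = 0$, $\omega_i = \Tr\circ\phi_i$, and $P_i$ the orthogonal projection onto $\ran A_i$. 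Consequently, $\phi(\eta\eta^*)$ has an explicit block form and its zero pairs can be read off directly.

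The crux of the argument is to use single-subspace inputs to reduce matters to the already-known exposedness of $\phi_1$ and $\phi_2$. For $\eta = \eta_1 \in \cK_1$, all cross-blocks of $\phi(\eta_1\eta_1^*)$ vanish and its $(1,1)$-block equals $\phi_1(\eta_1\eta_1^*) = (A_1\eta_1)(A_1\eta_1)^*$; hence $y = y_1 \in \cH_1$ with $y_1 \perp A_1\eta_1$ is a zero of $\phi$. This transfers to the compression $\tilde\psi_1 : B(\cK_1) \to B(\cH_1)$ obtained by embedding $X \in B(\cK_1)$ into the first summand of $B(\cK)$ and reading off the $(1,1)$-block of $\psi(X)$: it is positive and satisfies the exposedness hypothesis for $\phi_1$, so by the result of \cite{MM} we have $\tilde\psi_1 = c_1\phi_1$ for some $c_1 \geq 0$; symmetrically $\tilde\psi_2 = c_2\phi_2$. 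Enlarging the zero family to $y = y_1 + y_2 + \beta e$ with $y_1 \perp A_1\eta_1$, $y_2 \perp \ran A_2$ and $\beta \in \bC$ arbitrary, positivity of $\psi(\eta_1\eta_1^*)$ then forces its range into $\bC\,A_1\eta_1 \oplus \ran A_2 \oplus 0$, which fixes most of its block structure. Augmenting inputs with an $\alpha\varepsilon$-component and carrying out the same zero-pair analysis determines the $(1,3)$, $(3,1)$, $(2,3)$, $(3,2)$ and $(3,3)$ blocks of $\psi$ to be $c_1 A_1$, $c_1 A_1^*$, $c_2 A_2$, $c_2 A_2^*$ and a scalar multiple of $\id_\bC$, respectively.

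It remains to match the two scalars and to control the $\omega$-perturbations on the diagonal blocks; this is the step I expect to be the main obstacle. For this I would exploit the mixed inputs $\eta = \eta_1 + \eta_2$, for which
\begin{equation*}
\la y, \phi(\eta\eta^*) y\ra = |\la y_1, A_1\eta_1\ra|^2 + \|A_2\ov{\eta_2}\|^2\|P_1 y_1\|^2 + |\la y_2, A_2\ov{\eta_2}\ra|^2 + \|A_1\eta_1\|^2\|P_2 y_2\|^2.
\end{equation*}
Each summand is nonnegative, so the zero pairs are precisely those $y = y_1+y_2$ that annihilate every term. Scanning over choices of $\eta_i$ and $y_i$ (including the degenerate cases where $A_1\eta_1 = 0$ or $A_2\ov{\eta_2} = 0$) and transferring the vanishing to $\psi$, one extracts, via positivity of $\psi$ together with the already-determined blocks, equations that identify the $P_1$-perturbation of $\psi^{11}$ and the $P_2$-perturbation of $\psi^{22}$ with their $\phi$-counterparts and force $c_1 = c_2 =: c$; simultaneously, a Schur-complement positivity argument rules out any $(1,2)$- or $(2,1)$-block in $\psi$ beyond the zero block of $\phi$. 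Combining these facts gives $\psi = c\phi$, establishing that $\phi$ is exposed.
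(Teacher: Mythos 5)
Your overall strategy --- verifying the exposedness criterion \eqref{expo} directly for the general map --- is genuinely different from the paper's, which first proves the special case $A_i=\id_{\cK_i}$ (Theorem \ref{special}) by a long block-by-block analysis and only then transports exposedness to arbitrary $A_1,A_2$ via the lifting Lemma \ref{lift} applied to injective operators $E,F$ built from the $A_i$. Your reduction of the two diagonal compressions to the already-known exposedness of $\phi_1$ and $\phi_2$ from \cite{MM} is a genuine shortcut that the paper does not exploit (it re-derives $\Psi_{11}(\eta_1,\eta_1)=\lambda\eta_1\eta_1^*$ from scratch). However, the proposal as written has a real gap: essentially everything after ``augmenting inputs with an $\alpha\varepsilon$-component'' is asserted rather than proved, and those assertions are exactly where the difficulty of the theorem lives.

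Concretely: (i) when you declare the $(1,3)$-block of $\psi$ to be $c_1A_1$ you silently discard a possible antilinear component. For a positive map preserving this block structure one only knows a priori that $\psi(\eta\eps^*)=(R\eta)e^*+e(Q\ov{\eta})^*$ for some linear $R$ and $Q$ (the paper's Corollary \ref{corRQ}); killing the $Q$-parts and pinning $R$ down requires the phase-rotation and determinant arguments of Observations \ref{diag}, \ref{diag12} and \ref{RiQ}, none of which appear in your sketch. (ii) The identification of the $P_1$- and $P_2$-perturbations of the diagonal blocks, the matching of the two scalars, and the vanishing of the $(1,2)$-block are the content of Observations \ref{diagtrzy}, \ref{diagdwa} and \ref{offdiag} together with the polynomial-coefficient argument producing \eqref{suma}--\eqref{R21xi2}; Observation \ref{offdiag} in particular needs test vectors built out of $\Psi_{12}$ itself and a nontrivial determinant computation, so calling this step ``a Schur-complement positivity argument'' does not discharge it. (iii) By attacking general $A_i$ directly you also take on the degeneracies $\ker A_i\neq 0$ and $\ran A_i\neq\cH_i$, which enlarge every zero set and would have to be threaded through each of the steps above; the paper's lifting lemma exists precisely to avoid redoing the analysis in that generality. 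The plan is sound in outline and correctly locates the main obstacle, but the proof of that obstacle is not supplied.
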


It was shown in \cite{MM10} that for maps \eqref{e:phi12} the operators $B_1$ and $C_2$ given by \eqref{B1} and \eqref{C2} are equal to $A_1$ and $A_2$ respectively except for a phases, i.e. there are real numbers $\theta_1$, $\theta_2$ such that $B_1=\e{1}A_1$ and $C_2=\e{2}A_2$. Therefore, the map $\phi$ is of the form
\beq
\label{block}
\phi(X)=
\left(\ba{ccc}
A_1X_{11}A_1^* + \Tr (A_2X_{22}^\rT A_2^*)P_1 & 0 & \e{1}A_1X_{13} \\[2mm]
0 & A_2X_{22}^\rT A_2^* + \Tr (A_1X_{11}A_1^*)P_2 & \e{2}A_2X_{32}^\rT \\
\me{1}X_{31}A_1^* & \me{2}X_{23}^\rT A_2^* & X_{33}
\ea\right)
\eeq
where $X=(X_{ij})\in B(\cK)$ and $P_i$ is the projection on $\ran A_i$ for $i=1,2$.

Having in mind Corollary \ref{c:nu} we can generalize this map a bit more. Thus, we came to a formulation of the main result of this section
\begin{thm}
\label{mainm}
Let $\cK_1,\cK_2,\cH_1,\cH_2$ be finite dimensional Hilbert spaces, $\cK=\cK_1\oplus\cK_2\oplus\bC$, $\cH=\cH_1\oplus\cH_2\oplus\bC$ and let $A_1\in B(\cK_1,\cH_1)$ and $A_2\in B(\cK_2,\cH_2)$ be nonzero operators. Then for any $\theta_1,\theta_2\in\bR$ and $\nu\in\bR$, $\nu>0$, the map $\phi:B(\cK)\to B(\cH)$ given by
\beq
\label{block}
\phi(X)=
\left(\ba{ccc}
A_1X_{11}A_1^* + \nu\, \Tr (A_2X_{22}^\rT A_2^*)P_1 & 0 & \e{1}A_1X_{13} \\[2mm]
0 & A_2X_{22}^\rT A_2^* + \nu^{-1}\Tr (A_1X_{11}A_1^*)P_2 & \e{2}A_2X_{32}^\rT \\
\me{1}X_{31}A_1^* & \me{2}X_{23}^\rT A_2^* & X_{33}
\ea\right)
\eeq
for $X=(X_{ij})\in B(\cK)$, is an exposed positive map.
\end{thm}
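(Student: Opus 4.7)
Positivity of $\phi$ is essentially in hand. Since $\phi_1(X)=A_1XA_1^*$ is completely positive (in particular $2$-positive) and $\phi_2(X)=A_2X^\rT A_2^*$ is completely copositive (in particular $2$-copositive), Theorem~\ref{2pos2copos} gives positivity of the canonical merging (with $\nu=1$, trivial phases); the $\nu$-scaling of $\omega_1,\omega_2$ is absorbed by Corollary~\ref{c:nu}, and the phases $e^{i\theta_i}$ come from Definition~\ref{canon} via the freedom in the choice of $\xi_i,x_i$. For exposedness I would invoke the characterisation \eqref{expo}: suppose $\psi\in\bP(\cK,\cH)$ satisfies
$$
\forall(\eta,y)\in\cK\times\cH:\ \la y,\phi(\eta\eta^*)y\ra=0\ \Longrightarrow\ \la y,\psi(\eta\eta^*)y\ra=0,
$$
and aim to conclude $\psi\in\bR_+\phi$.

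The first step is to make $Z(\phi):=\{(\eta,y):\la y,\phi(\eta\eta^*)y\ra=0\}$ explicit. Writing $\eta=\eta_1+\eta_2+\alpha\varepsilon$, $y=y_1+y_2+\beta e$ and expanding \eqref{block} block by block, one obtains
\begin{align*}
\la y,\phi(\eta\eta^*)y\ra
&=|\la y_1,A_1\eta_1\ra|^2+|\la y_2,A_2\overline{\eta_2}\ra|^2+\nu\|A_2\overline{\eta_2}\|^2\|P_1y_1\|^2\\
&\quad+\nu^{-1}\|A_1\eta_1\|^2\|P_2y_2\|^2+|\alpha\beta|^2\\
&\quad+2\Re\bigl[\bar\alpha\beta e^{i\theta_1}\la y_1,A_1\eta_1\ra\bigr]+2\Re\bigl[\alpha\beta e^{i\theta_2}\la y_2,A_2\overline{\eta_2}\ra\bigr],
\end{align*}
a manifestly non-negative expression exhibiting many explicit zero-pairs.

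I would then recover $\psi$ block by block. Restricting to block-$1$ data, pairs $(\eta_1,y_1)$ with $\la y_1,A_1\eta_1\ra=0$ lie in $Z(\phi)$, so the compression $X\mapsto W_1^*\psi(W_1XW_1^*)W_1$ vanishes on the zero-set of $\phi_1=A_1\cdot A_1^*$; since $\phi_1$ is exposed (\cite{MM}), this compression equals $c_1\phi_1$ for some $c_1\geq 0$. Symmetrically, the $(2,2)$-compression equals $c_2\phi_2$. The pair $(\varepsilon,y_1+y_2)\in Z(\phi)$ (any $y_1,y_2$) forces $\psi(\varepsilon\varepsilon^*)=c_3\,ee^*$. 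Pairs $(\eta_1,y_2)$ with $A_1\eta_1=0$ or $P_2y_2=0$ force the cross-block $\psi_{11}^{22}$ to have range inside $\ran P_2$ and to vanish on $B(\ker A_1)$ (and symmetrically for $\psi_{22}^{11}$). Finally, the coupling family $(\eta_1+\varepsilon,y_1+\beta e)$ with $\la y_1,A_1\eta_1\ra+e^{-i\theta_1}\bar\beta=0$ (and its block-$2$ analogue) encodes the off-diagonal blocks $\psi_{13}^{13},\psi_{31}^{31}$ (and $\psi_{23}^{32},\psi_{32}^{23}$), and identifies $c_1=c_3$ and $c_2=c_3$ via consistency.

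\textbf{Main obstacle.} The principal difficulty is reconciling \emph{all} the scaling constants to a single value $c$, in particular matching the cross-block constants in $\psi_{11}^{22},\psi_{22}^{11}$ with the diagonal-corner constants. This requires the fully mixed zero-pairs of $Q$ where all six scalar/vector data $\eta_1,\eta_2,\alpha,y_1,y_2,\beta$ are simultaneously non-trivial. The $\nu$-weighted cross-terms $\nu\|A_2\overline{\eta_2}\|^2\|P_1y_1\|^2$ and $\nu^{-1}\|A_1\eta_1\|^2\|P_2y_2\|^2$ in $Q$ are the unique pieces coupling the ``$\phi_1$-side'' and the ``$\phi_2$-side'' of the merging; they are what forces the two halves of $\psi$ to scale by a common factor. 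Organising the resulting polynomial identity so that the constants collapse cleanly into a single $c$ — and writing out $\la y,(\psi-c\phi)(\eta\eta^*)y\ra\equiv 0$ as a direct consequence — is the crux of the argument.
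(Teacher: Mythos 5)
Your positivity argument is fine, and your computation of $\la y,\phi(\eta\eta^*)y\ra$ and of the zero set $Z(\phi)$ agrees with the paper's (equation \eqref{kji} in the special case). But the exposedness part is a plan rather than a proof, and the part you defer is exactly where all of the work lies. You correctly note that the compressions of $\psi$ to the $(1,1)$ and $(2,2)$ corners must be multiples $c_1\phi_1$, $c_2\phi_2$ by exposedness of $A_i\cdot A_i^*$ and $A_i\cdot^{\rT}A_i^*$, and that $\psi(\varepsilon\varepsilon^*)=c_3\,ee^*$; but a positive map $\psi$ vanishing on $Z(\phi)$ is a priori described by a large collection of unknowns --- in the paper's notation the sesquilinear vector-valued forms $\Psi_{kl}(\eta_0,\eta_0')$ for $k,l=1,2$ and the operators $R_k,Q_k$ of Observation \ref{form} --- and one must show that \emph{every} off-diagonal piece ($\Psi_{12}$ on all four types of arguments, the cross-terms $\Psi_{11}(\eta_1,\eta_2)$, $\Psi_{22}(\eta_1,\eta_2)$, and the ``wrong'' components $Q_{11},R_{22},R_{12},Q_{21},R_{21},Q_{12}$) vanishes, not merely that the constants match. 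In the paper this occupies Observations \ref{diag}--\ref{offdiag}: it requires carefully chosen test pairs from the fully mixed branch \eqref{nnn}, a substitution/polynomial-coefficient argument in auxiliary scalars $\gamma_1,\gamma_2$, polarization (Proposition \ref{sesq}), and several determinant computations such as \eqref{macierzduza}. None of this is supplied or even sketched in a checkable form; your closing paragraph concedes that ``organising the resulting polynomial identity so that the constants collapse'' is the unresolved crux, so the proof is not complete.

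A second, structural point: the paper does not run the zero-set analysis against the general map \eqref{block} at all. It first proves the normalized case $A_i=\id$, $\theta_i=0$, $\nu=1$ (Theorem \ref{special}), and then obtains the general statement by exhibiting $\phi=\cL_{E,F}\,\phi_{\ti{\cK}_1,\ti{\cK}_2}$ for explicit injective $E,F$ and invoking Lemma \ref{lift}, which says that conjugation by injective operators preserves exposedness. Your direct approach must instead carry $\ker A_i$, $(\ran A_i)^\perp$, the phases $e^{\i\theta_i}$ and the weight $\nu$ through every one of the steps above (you already see this in the extra case distinctions for $A_1\eta_1=0$ and $P_2y_2=0$). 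That is not wrong in principle, but it makes the missing core of the argument strictly harder than the version the paper actually carries out; if you want to complete your proof, the efficient route is to prove the $A_i=\id$ case first and add a reduction lemma of the type of Lemma \ref{lift}.
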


Before the proof we will list some preliminary results. The first lemma seems to be known. We attach its proof for reader's convenience.
\begin{lem}
\label{lift}
Suppose $\cK$, $\cH$, $\ti{\cK}$, $\ti{\cH}$ are finite dimensional Hilbert spaces and $E:\ti{\cK}\to \cK$, $F:\ti{\cH}\to\cH$ are injective linear operators. For a linear map $\tilde{\phi}:B(\ti{\cK})\to B(\ti{\cH})$ we consider a map $\cL_{E,F}\ti{\phi}:B(\cK)\to B(\cH)$, defined by $\cL_{E,F}\ti{\phi}(X)=F\ti{\phi}(E^*XE)F^*$ for $X\in B(\cK)$. If $\ti{\phi}$ is an exposed element of the cone $\bP(\ti{\cK},\ti{\cH})$ then $\cL_{E,F}\ti{\phi}$ is an exposed element of $\bP(\cK,\cH)$.
\end{lem}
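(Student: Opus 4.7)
The plan is to verify exposedness via the criterion \eqref{expo}: I assume $\psi\in\bP(\cK,\cH)$ satisfies the kernel implication
\[
\langle y,\cL_{E,F}\tilde\phi(\eta\eta^*)y\rangle=0\ \Longrightarrow\ \langle y,\psi(\eta\eta^*)y\rangle=0, \qquad (\eta,y)\in\cK\times\cH,
\]
and must deduce $\psi\in\bR_+\cL_{E,F}\tilde\phi$. It will be convenient to first reduce to the case when $E$ and $F$ are isometries. Using the polar decompositions $E=U_E|E|$, $F=U_F|F|$ (with $|E|, |F|$ positive and invertible by the injectivity of $E, F$, and $U_E, U_F$ isometries), I rewrite
\[
\cL_{E,F}\tilde\phi=\cL_{U_E,U_F}\hat\phi,\qquad \hat\phi(Y):=|F|\,\tilde\phi(|E|Y|E|)\,|F|.
\]
That $\hat\phi$ is exposed whenever $\tilde\phi$ is follows from \eqref{expo} via the bijective change of variables $\tilde y=|F|\tilde y_0$, $\tilde\eta=|E|\tilde\eta_0$ on $\tilde\cK\times\tilde\cH$, which identifies the zero loci of $\tilde\phi$ and $\hat\phi$ and allows transfer of the kernel implication between the two maps.

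Take now $E, F$ isometric and put $P=EE^*$, $Q=FF^*$. The key construction is the compressed map
\[
\tilde\psi:B(\tilde\cK)\to B(\tilde\cH),\qquad \tilde\psi(\tilde X):=F^*\psi(E\tilde XE^*)F,
\]
which is positive. For $(\tilde\eta,\tilde y)\in\tilde\cK\times\tilde\cH$, setting $\eta:=E\tilde\eta$, $y:=F\tilde y$ and invoking $E^*E=\id_{\tilde\cK}$, $F^*F=\id_{\tilde\cH}$ gives
\[
\langle y,\cL_{E,F}\tilde\phi(\eta\eta^*)y\rangle=\langle\tilde y,\tilde\phi(\tilde\eta\tilde\eta^*)\tilde y\rangle,\qquad \langle y,\psi(\eta\eta^*)y\rangle=\langle\tilde y,\tilde\psi(\tilde\eta\tilde\eta^*)\tilde y\rangle,
\]
so the kernel implication transfers to $(\tilde\phi,\tilde\psi)$. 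Exposedness of $\tilde\phi$ then yields $\tilde\psi=\lambda\tilde\phi$ for some $\lambda\geq 0$.

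It remains to promote $\tilde\psi=\lambda\tilde\phi$ to $\psi=\lambda\cL_{E,F}\tilde\phi$ on all of $B(\cK)$, which requires two support statements for $\psi$. First, for $y\in\ker Q$ one has $F^*y=0$, so $(\eta,y)$ lies in the zero locus of $\cL_{E,F}\tilde\phi$ for every $\eta\in\cK$; positivity of $\psi(\eta\eta^*)$ forces $\psi(\eta\eta^*)y=0$, and polarization in $\eta$ combined with the Hermitian-preserving property $\psi(X^*)=\psi(X)^*$ gives $\psi(X)=Q\psi(X)Q$ for every $X\in B(\cK)$. Second, for $\eta\in\ker E^*$, $\cL_{E,F}\tilde\phi(\eta\eta^*)=0$, so $\psi(\eta\eta^*)=0$; for each $\zeta\in\cK$ and $t\in\bR$ the operator
\[
\psi\bigl((\eta+t\zeta)(\eta+t\zeta)^*\bigr)=t\,\psi(\eta\zeta^*+\zeta\eta^*)+t^2\psi(\zeta\zeta^*)
\]
is positive, and dividing by $|t|$ and letting $t\to 0^\pm$ forces $\psi(\eta\zeta^*+\zeta\eta^*)=0$; perturbing instead with $it$ gives $\psi(\eta\zeta^*)=\psi(\zeta\eta^*)=0$. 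Decomposing $X=PXP+PX(1-P)+(1-P)XP+(1-P)X(1-P)$, the last three summands vanish under $\psi$, hence $\psi(X)=\psi(PXP)$. Combining these, $\psi(X)=Q\psi(PXP)Q=F\tilde\psi(E^*XE)F^*=\lambda\,\cL_{E,F}\tilde\phi(X)$, finishing the proof.

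The main obstacle will be the operator-valued polarization step that deduces $\psi(\eta\zeta^*)=0$ for arbitrary $\zeta\in\cK$ from $\psi(\eta\eta^*)=0$ for $\eta\in\ker E^*$: since $\zeta$ need not lie in $\ker E^*$, this is not a formal polarization of a scalar quadratic form but requires using that $\psi((\eta+t\zeta)(\eta+t\zeta)^*)$ is a positive \emph{operator} for every real $t$, forcing its $t$-linear coefficient to vanish as an operator rather than merely as a scalar.
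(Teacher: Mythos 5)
Your proof is correct and follows essentially the same route as the paper's: compress $\psi$ to $\tilde\psi(\tilde X)=F^*\psi(E\tilde XE^*)F$ (the paper uses Penrose inverses $E^+,F^+$ where you first normalize to isometries via polar decomposition, which is equivalent), transfer the zero-locus implication to conclude $\tilde\psi=\lambda\tilde\phi$ from exposedness of $\tilde\phi$, and then use the support identities $\psi(X)=Q\psi(PXP)Q$ to recover $\psi=\lambda\cL_{E,F}\tilde\phi$. Your explicit $t\to 0^{\pm}$ perturbation argument establishing $\psi(\eta\zeta^*)=0$ for $\eta\in\ker E^*$ carefully justifies a step the paper states only in passing ("$\psi(\eta\eta^*)$ depends only on $\eta_E$"), which is a welcome addition but not a different method.
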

\begin{proof}
Let $\ti{\cZ}$ (respectively $\cZ$) denote the set of all pairs $(\ti{\eta},\ti{y})\in\ti{\cK}\times\ti{\cH}$ (respectively $(\eta,y)\in\cK\times\cH$) such that $\la \ti{y},\ti{\phi}(\ti{\eta}\ti{\eta}^*)\ti{y}\ra=0$ (respectively $\la y,\cL_{E,F}\ti{\phi}(\eta\eta^*)y\ra=0$). One can easily observe that $(\eta,y)\in\cZ$ if and only if $(E^*\eta,F^*y)\in\ti{\cZ}$. It follows that
\beq
\label{jup}
\left(\ker E^*\times\cH\right)\cup \left(\cK\times \ker F^*\right)\subset \cZ .
\eeq
We will show that $\cL_{E,F}\ti{\phi}$ satisfies condition \eqref{expo}. Assume that $\psi:B(\cK)\to B(\cH)$ is a positive map such that $\la y,\psi(\eta\eta^*)y\ra=0$ for every $(\eta,y)\in\cZ$.
Let $E^+:\cK\to \ti{\cK}$ and $F^+:\cH\to \ti{\cH}$ be Penrose inverses of $E$ and $F$. Since $E$ and $F$ are injective, $E^+$ and $F^+$ are characterized as unique operators which satisfy the following set of relations: $E^+E=\id_{\ti{\cK}}$, $F^+F=\id_{\ti{\cH}}$, $EE^+=P_E$, $FF^+=P_F$, where $P_E\in B(\cK)$ and $P_F\in B(\cH)$ denote orthogonal projections onto ranges of operators $E$ and $F$ respectively.
It follows from \eqref{jup} that for each $\eta\in\cK$, the range of the positive operator $\psi(\eta\eta^*)$ is contained in $\ran F$. Moreover, $\psi(\eta\eta^*)$ depends only on $\eta_E$, where $\eta_E\in \ran E$ is a unique vector such that $\eta=\eta_E+\eta_0$ for some $\eta_0\in \ker E^*$. Therefore, $\psi$ satisfies the following condition
\beq
\label{si}
\psi(X)=P_F\psi(P_EXP_E)P_F
=
FF^+\psi(E^{+*}E^*XEE^+)F^{+*}F^*,\qquad X\in B(\cK).
\eeq
Let $\ti{\psi}:B(\ti{\cK})\to B(\ti{\cH})$ be defined by $\ti{\psi}(\ti{X})=F^+\psi(E^{+*}\ti{X}E^+)F^{+*}$ for $\ti{X}\in B(\ti{\cK})$. Let $(\ti{\eta},\ti{y})\in\ti{\cZ}$. Since $E^*E^{+*}\ti{\eta}=\ti{\eta}$ and $F^*F^{+*}\ti{y}=\ti{y}$, we have $(E^{+*}\ti{\eta},F^{+*}\ti{y})\in \cZ$. Thus, the assumption on $\psi$ implies
$$
\la \ti{y},\ti{\psi}(\ti{\eta}\ti{\eta}^*)\ti{y}\ra=\la \ti{y}, F^+\psi(E^{+*}\ti{\eta}\ti{\eta}^*E^+)F^{+*}\ti{y}\ra=
\la F^{+*}\ti{y},\psi\left((E^{+*}\ti{\eta})(E^{+*}\ti{\eta})^*\right)F^{+*}\ti{y}\ra=0.
$$
Therefore $\ti{\psi}=\lambda\ti{\phi}$ for some $\lambda\in\bR_+$, because $\ti{\phi}$ is an exposed positive map. Observe that the condition \eqref{si} reads as $\psi=\cL_{E,F}\ti{\psi}$. Hence, we arrive at $\psi=\cL_{E,F}\ti{\psi}=\cL_{E,F}(\lambda\ti{\phi})=\lambda\cL_{E,F}\ti{\phi}$.
\end{proof}

\begin{lem}\label{lemoff}
Let $\cK=\bigoplus_{i=1}^n\cK_i$ and $\cH=\bigoplus_{i=1}^n\cH_i$ and let $\phi: B(\cK)\to B(\cH)$ be a positive map such that $\phi(B(\cK_i))\subset B(\cH_i)$ for $i=1,2,\ldots,n$. Then for any $i,j=1,\ldots,n$ such that $i\neq j$, $\phi(B(\cK_j,\cK_i)\subset B(\cH_j,\cH_i)\oplus B(\cH_i,\cH_j)$.
\end{lem}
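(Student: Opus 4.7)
The plan is to reduce to rank-one inputs and exploit positivity of $\phi$ on a one-parameter family of positive operators built from vectors in $\cK_i\oplus\cK_j$. Operators of the form $\xi\eta^*$ with $\xi\in\cK_i$ and $\eta\in\cK_j$ span $B(\cK_j,\cK_i)$, so by linearity it suffices, for each fixed such $\xi,\eta$, to prove that the blocks $Y_{kl}:=W_k^*YW_l$ of $Y:=\phi(\xi\eta^*)$ vanish unless $(k,l)\in\{(i,j),(j,i)\}$. Since $\phi$ is positive it preserves adjoints, so $\phi(\eta\xi^*)=Y^*$.

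The essential input is the positivity of
$$
\phi(P_\alpha)=\phi(\xi\xi^*)+\bar\alpha\, Y+\alpha\, Y^*+|\alpha|^2\phi(\eta\eta^*)\geq 0,\qquad\alpha\in\bC,
$$
where $P_\alpha=(\xi+\alpha\eta)(\xi+\alpha\eta)^*$. By hypothesis $\phi(\xi\xi^*)$ is supported in the $(i,i)$-block and $\phi(\eta\eta^*)$ in the $(j,j)$-block, so the $(k,l)$-block of $\phi(P_\alpha)$ equals $\delta_{ki}\delta_{li}\phi(\xi\xi^*)+\bar\alpha Y_{kl}+\alpha Y_{lk}^*+|\alpha|^2\delta_{kj}\delta_{lj}\phi(\eta\eta^*)$. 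For $k=l\notin\{i,j\}$ the two $\delta$-terms drop out, leaving $\bar\alpha Y_{kk}+\alpha Y_{kk}^*\geq 0$ for every $\alpha\in\bC$; writing $Y_{kk}=A+iB$ with $A,B$ Hermitian, this expression equals $2(\mathrm{Re}\,\alpha)A+2(\mathrm{Im}\,\alpha)B$, forcing $A=B=0$ and hence $Y_{kk}=0$. Next, for an off-diagonal pair $(k,l)$, $k\neq l$, with $(k,l)\notin\{(i,j),(j,i)\}$, at least one of $k,l$ lies outside $\{i,j\}$; say it is $l$, so $\phi(P_\alpha)_{ll}=0$ by what was just proved. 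The principal $2\times 2$ block submatrix of $\phi(P_\alpha)$ on indices $\{k,l\}$ is PSD with a vanishing diagonal entry, and the standard observation that $\bigl(\begin{smallmatrix}P&Q\\Q^*&0\end{smallmatrix}\bigr)\geq 0$ implies $Q=0$ yields $\phi(P_\alpha)_{kl}=0$. Thus $\bar\alpha Y_{kl}+\alpha Y_{lk}^*=0$ for every $\alpha$, and evaluating at $\alpha=1,i$ gives $Y_{kl}=Y_{lk}=0$.

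The remaining cases are the blocks $(i,i)$ and $(j,j)$, where a positive term $\phi(\xi\xi^*)$ (respectively $|\alpha|^2\phi(\eta\eta^*)$) appears in the diagonal positivity inequality and could a priori absorb sign changes of $\bar\alpha Y_{ii}+\alpha Y_{ii}^*$. The workaround is to substitute $\alpha\mapsto t\alpha$ with $t>0$ into the inequality $\phi(\xi\xi^*)+t\bar\alpha Y_{ii}+t\alpha Y_{ii}^*\geq 0$, divide by $t$, and let $t\to\infty$; the term $\phi(\xi\xi^*)/t$ vanishes and one recovers $\bar\alpha Y_{ii}+\alpha Y_{ii}^*\geq 0$ for every $\alpha$, exactly the situation already handled, giving $Y_{ii}=0$. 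Taking $t\to 0^+$ in the analogous inequality for the $(j,j)$-block kills the $t^2|\alpha|^2\phi(\eta\eta^*)$ contribution and yields $Y_{jj}=0$ by the same argument. This limiting manipulation is the only step that goes beyond mechanical principal-submatrix positivity, and it is the main (if minor) obstacle in the proof.
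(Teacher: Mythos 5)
Your proof is correct and follows essentially the same route as the paper: both exploit positivity of $\phi$ on the family $(\xi+\alpha\eta)(\xi+\alpha\eta)^*$, kill the diagonal blocks first, and then use positivity of $2\times 2$ principal block submatrices with a vanishing diagonal entry to kill the off-diagonal blocks. The only cosmetic difference is that for the $(i,i)$ and $(j,j)$ blocks the paper avoids your rescaling limit by using the discriminant inequality $|\la y_k,\phi_{kk}(\eta_i\eta_j^*)y_k\ra|^2\le\la y_k,\phi_{kk}(\eta_i\eta_i^*)y_k\ra\,\la y_k,\phi_{kk}(\eta_j\eta_j^*)y_k\ra$, in which one factor on the right automatically vanishes.
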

\begin{proof}
For $k,l=1,\ldots,n$, let $\phi_{kl}:B(\cK)\to B(\cH_l,\cH_k)$ be a linear map defined by
$\phi_{kl}(X)=W_k^*\phi(X)W_l$, $X\in B(\cK)$. Let $i,j$ be given numbers from $1,\ldots, n$ such that $i\neq j$. Consider nonzero vectors $\eta_i\in\cK_i$ and $\eta_j\in\cK_j$. 
In order to prove the Lemma one need to show that $\phi_{kl}(\eta_i\eta_j^*)=0$ until $\{k,l\}=\{i,j\}$.
Let us observe that $\phi_{kl}(\eta_i\eta_j^*)^*=\phi_{lk}(\eta_j\eta_i^*)$. Moreover, it follows from the assumption that
$\phi_{kl}(\eta_i\eta_i^*)=0$ until $k=l=i$ and $\phi_{kl}(\eta_j\eta_j^*)=0$ until $k=l=j$.
The element $\phi((\eta_i+z\eta_j)(\eta_i+z\eta_j)^*)$ is positive in $B(\cH)$ for every $\gamma\in\bC$, and consequently $\phi_{kk}((\eta_i+\gamma\eta_j)(\eta_i+\gamma\eta_j)^*)$ is positive in $B(\cH_k)$ for every $k=1,\ldots,n$. Hence, for any $y_k\in\cH_k$,
$$\la y_k,\phi_{kk}(\eta_i\eta_i^*)y_k\ra+2\Re\,\ov{\gamma}\la y_k,\phi_{kk}(\eta_i\eta_j^*)y_k\ra+|\gamma|^2\la y_k,\phi_{kk}(\eta_i\eta_i^*)y_k\ra\geq 0.$$
Since it holds for any $\gamma\in\bC$,
$$|\la y_k,\phi_{kk}(\eta_i\eta_j^*)y_k\ra|^2\leq  \la y_k,\phi_{kk}(\eta_i\eta_i^*)y_k\ra\la y_k,\phi_{kk}(\eta_i\eta_i^*)y_k\ra.$$
Due to the assumption, at least one of the factors in right hand side of the above inequality is zero, so $\la y_k,\phi_{kk}(\eta_i\eta_j^*)y_k\ra=0$ for any $y_k\in\cH_k$. By Proposition \ref{sesq} we conclude that $\phi_{kk}(\eta_i\eta_j^*)=0$.
Now, let $k\neq l$. It follows from Lemma \ref{blockpos} that
$$
\left|\ba{cc} \la y_k,\phi_{kk}((\eta_i+\gamma\eta_j)(\eta_i+\gamma\eta_j)^*) y_k\ra & \la y_k,\phi_{kl}((\eta_i+\gamma\eta_j)(\eta_i+\gamma\eta_j)^*) y_l\ra \\ \la y_l,\phi_{lk}((\eta_i+\gamma\eta_j)(\eta_i+\gamma\eta_j)^*) y_k\ra & \la y_l,\phi_{ll}((\eta_i+\gamma\eta_j)(\eta_i+\gamma\eta_j)^*) y_l\ra \ea\right|\geq 0
$$
for every $ y_k\in\cH_k$ and $ y_l\in\cH_l$. Assume now that $\{k,l\}\neq\{i,j\}$. It follows that at least one diagonal term is zero. Hence off-diagonal terms should also vanish, so
$$\ov{\gamma}\la y_k,\phi_{kl}(\eta_i\eta_j^*) y_l\ra+\gamma\la  y_k,\phi_{kl}(\eta_j\eta_i^*) y_l\ra=0.$$
Since it holds for any $\gamma\in\bC$, $\la y_k,\phi_{kl}(\eta_i^{}\eta_j^*) y_l\ra=0$, and consequently $\phi_{kl}(\eta_i^{}\eta_j^*)=0$.
\end{proof}
\begin{cor}\label{corRQ}
Let $\cK$ and $\cH$ be Hilbert spaces. Assume $\cK=\cK_1\oplus\cK_2$ and $\cH=\cH_1\oplus \cH_2$ where $\cK_2=\bC \eps$ and $\cH_2=\bC e$ are one dimensional subspaces generated by unit vectors $\eps\in\cK$ and $e\in\cH$. Let $\phi: B(\cK)\to B(\cH)$ be a positive map such that $\phi(B(\cK_i))\subset B(\cH_i)$ for $i=1,2$. Then there are linear maps $R,Q:\cK_1\to\cH_1$ such that
\beq\label{RQ}
\phi(\eta \eps^*)= (R\eta)e^* + e(Q\ov{\eta})^*,\qquad \eta\in\cK_1.
\eeq
\end{cor}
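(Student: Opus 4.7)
The plan is to deduce the corollary as a direct specialization of Lemma \ref{lemoff} to $n=2$, combined with the fact that both $\cK_2$ and $\cH_2$ are one-dimensional. For any $\eta\in\cK_1$ the rank-one operator $\eta\varepsilon^*$ lies in $B(\cK_2,\cK_1)$, so Lemma \ref{lemoff} with $i=1$, $j=2$ tells us that $\phi(\eta\varepsilon^*)$ has at most two nonzero blocks with respect to the decomposition $\cH=\cH_1\oplus\cH_2$: the $(1,2)$-block lying in $B(\cH_2,\cH_1)$ and the $(2,1)$-block lying in $B(\cH_1,\cH_2)$.

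Next I would use the one-dimensionality of $\cH_2=\bC e$ to put these blocks into closed form. Every operator in $B(\cH_2,\cH_1)$ has the unique form $z e^*$ for some $z\in\cH_1$, and every operator in $B(\cH_1,\cH_2)$ has the unique form $e w^*$ for some $w\in\cH_1$. Consequently there exist uniquely determined vectors $R(\eta),S(\eta)\in\cH_1$ such that
$$\phi(\eta\varepsilon^*) \;=\; R(\eta)\, e^* \;+\; e\, S(\eta)^*.$$

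Linearity of $\phi$ together with linearity of $\eta\mapsto\eta\varepsilon^*$ forces $\eta\mapsto R(\eta)$ to be linear, which yields the required operator $R:\cK_1\to\cH_1$. For the second summand, the map $\eta\mapsto e S(\eta)^*$ must also be linear in $\eta$, but $w\mapsto e w^*$ is antilinear in $w$, so $\eta\mapsto S(\eta)$ has to be antilinear. Composing with the antilinear involution on $\cK_1$ repackages this as a genuine linear map $Q:\cK_1\to\cH_1$ by setting $Q\zeta:=S(\ov{\zeta})$, whence $S(\eta)=Q\ov{\eta}$ and the desired identity $\phi(\eta\varepsilon^*)=(R\eta)e^*+e(Q\ov{\eta})^*$ follows.

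There is no real obstacle: once Lemma \ref{lemoff} is applied, the remaining argument is purely a rewriting using that $\cH_2$ is one-dimensional. The only point that requires a moment's care is the antilinear bookkeeping in the second block, which is precisely why the statement features $Q\ov{\eta}$ rather than $Q\eta$; without passing through the involution one would only obtain an antilinear map $\cK_1\to\cH_1$ instead of a linear one.
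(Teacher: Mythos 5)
Your proposal is correct and follows essentially the same route as the paper's own proof: apply Lemma \ref{lemoff} to isolate the $(1,2)$ and $(2,1)$ blocks, use one-dimensionality of $\cH_2$ to write them as $ze^*$ and $ew^*$, and absorb the antilinearity of the second block into the involution to obtain a linear $Q$. No gaps.
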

\begin{proof}
It follows from Lemma \ref{lemoff} that $\phi(\eta \eps^*)=\phi_{12}(\eta \eps^*)+\phi_{21}(\eta \eps^*)\in B(\cH_2,\cH_1)\oplus B(\cH_1,\cH_2)= B(\bC e,\cH_1)\oplus B(\cH_1,\bC e)$ for any $\eta\in\cK_1$.
Every element of $B(\bC e,\cH_1)$ is of the form $y e^*$ for some $y\in\cH_1$ while elements of $B(\cH_1,\bC e)$ are of the form $ey^*$. It follows that $\phi(\eta \eps^*)=(R\eta)e^*$ for some $R\eta\in\cH_1$ and the mapping $R:\cK_1\to\cH_1$ should be linear. Analogously, $\phi_{21}(\eta \eps^*)=e(Q'\eta)^*$ for some $Q'\eta\in\cH_1$ where $Q':\cK_1\to\cH_1$ is antilinear. In order to obtain the form \eqref{RQ} take $Q:\cK_1\to\cH_1$ defined by $Q\eta=Q'\ov{\eta}$, $\eta\in\cK_1$, which is a linear map.
\end{proof}

Let $V$ and $W$ be complex vector spaces. We say that a map $\Phi:V\times V\to W$ is a  \textit{sesquiline\-ar vector valued form} if
\beq
\Phi(\alpha_1u_1+\alpha_2u_2,\beta_1v_1+\beta_2v_2)= \ov{\alpha_1}\beta_1\Phi(u_1,v_1)+ \ov{\alpha_1}\beta_2\Phi(u_1,v_2)+ \ov{\alpha_2}\beta_1\Phi(u_2,v_1)+ \ov{\alpha_2}\beta_2\Phi(u_2,v_2)
\eeq
for every $u_1,u_2,v_1,v_2\in V$ and $\alpha_1,\alpha_2,\beta_1\beta_2\in\bC$. The following fact will be used several times
\begin{prop}\label{sesq}
If $\Phi(v,v)=0$ for every $v\in V$, then $\Phi(u,v)=0$ for every pair of vectors $u,v\in V$.
\end{prop}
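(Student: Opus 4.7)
The plan is to use the standard polarization-identity trick for sesquilinear forms, exploiting that we work over $\bC$. Since the form is antilinear in the first slot and linear in the second, expanding $\Phi(u+v,u+v)$ for arbitrary $u,v\in V$ and using the hypothesis $\Phi(w,w)=0$ for all $w$ yields
\[
0=\Phi(u+v,u+v)=\Phi(u,u)+\Phi(u,v)+\Phi(v,u)+\Phi(v,v)=\Phi(u,v)+\Phi(v,u),
\]
so $\Phi(v,u)=-\Phi(u,v)$.

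Next I would exploit the complex structure by substituting $iv$ in place of $v$. Expanding $\Phi(u+iv,u+iv)=0$ and using antilinearity in the first slot plus linearity in the second gives
\[
0=\Phi(u,u)+i\,\Phi(u,v)-i\,\Phi(v,u)+\Phi(v,v)=i\bigl(\Phi(u,v)-\Phi(v,u)\bigr),
\]
whence $\Phi(u,v)=\Phi(v,u)$. Combining the two identities we obtain $2\Phi(u,v)=0$, so $\Phi(u,v)=0$ for every pair $u,v\in V$.

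There is no real obstacle here; the only thing to be careful about is keeping track of the conjugation convention (antilinear in the first argument, as specified by the sesquilinearity formula in the statement) so that the $i$-scaling step produces a factor $-i$ on the $\Phi(v,u)$ term rather than $+i$, which is precisely what makes the two polarized identities independent and forces $\Phi(u,v)=0$ rather than merely antisymmetry.
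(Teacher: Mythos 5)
Your proof is correct and is essentially the paper's argument: the paper simply invokes the four-term complex polarization identity $\Phi(u,v)=\tfrac{1}{4}\sum_{k=0}^3(-i)^k\Phi(u+i^kv,u+i^kv)$, while you derive the same conclusion by explicitly expanding $\Phi(u+v,u+v)$ and $\Phi(u+iv,u+iv)$ and combining the two resulting identities. Your sign bookkeeping for the antilinear first slot is right, so no issues.
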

\begin{proof}
It follows from the polarization identity
\beq
\Phi(u,v)=
\frac{1}{4}\sum_{k=0}^3(-i)^k \Phi\left(u+i^kv,u+i^kv\right)
\eeq
which is well known for scalar products (see for example \cite[Theorem 0.19]{Tes}).
\end{proof}

Now, we show that the statement of Theorem \ref{main} is true for special case $A_i=\id_{\cK_i}$, $\theta_i=0$ for $i=1,2$, and $\mu=1$. Namely, we have
\begin{thm}
\label{special}
If $\cK_1$, $\cK_2$ are finite dimensional Hilbert spaces, then the map $\phi_{\cK_1,\cK_2}$ given by \eqref{MOhd} is an exposed positive map.
\end{thm}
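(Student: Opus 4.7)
The plan is to verify the characterization \eqref{expo}. Fix $\psi\in\bP$ satisfying $\la y,\psi(\eta\eta^*)y\ra=0$ whenever $\la y,\phi_{\cK_1,\cK_2}(\eta\eta^*)y\ra=0$; we must show $\psi\in\bR_+\phi_{\cK_1,\cK_2}$.

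First, catalog the zero set $\cZ$ of $\phi_{\cK_1,\cK_2}$. Writing $\eta=\eta_1+\eta_2+\alpha\eps$ and $y=y_1+y_2+\gamma e$, a direct computation from \eqref{MOhd} produces a decomposition of the shape
\begin{equation*}
\la y,\phi_{\cK_1,\cK_2}(\eta\eta^*)y\ra=|\la\eta_1,y_1\ra+\bar\alpha\gamma|^2+|\la\overline{\eta_2},y_2\ra+\alpha\gamma|^2+\|\eta_2\|^2\|y_1\|^2+\|\eta_1\|^2\|y_2\|^2-|\alpha\gamma|^2.
\end{equation*}
Case analysis on whether $\alpha\gamma$ vanishes produces four relevant families of zero-pairs: (a) $(\eps,y_1+y_2)\in\cZ$ for all $y_1,y_2$; (b) $(\eta_i,y)\in\cZ$ whenever $y_j=0$ for $j\ne i$ and the appropriate orthogonality $\la\eta_i,y_i\ra=0$ (respectively $\la\overline{\eta_2},y_2\ra=0$) holds; (c) the constrained family $(\eta_i+\alpha\eps,y_i+\gamma e)\in\cZ$ for $\alpha\gamma\ne 0$ subject to a single scalar equation ($\la\eta_1,y_1\ra=-\bar\alpha\gamma$ or its analogue); and (d) $(\eta_1+\eta_2,\gamma e)\in\cZ$ for $\eta_1,\eta_2\ne 0$.

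Next, extract the block structure of $\psi$. From (a) and positivity of $\psi(\eps\eps^*)$, $\psi(\eps\eps^*)=\mu\,ee^*$ for some $\mu\ge 0$. Applying the sesquilinear-form trick (Proposition \ref{sesq}) to the zero-pairs of types (b) and (d) shows that $\psi$ maps $B(\cK_i)$ into $B(\cH_i)$ and that $\psi(\eta_1\eta_2^*)=0$. Lemma \ref{lemoff} then constrains the remaining off-diagonal slots, and Corollary \ref{corRQ} provides linear operators $R_i,Q_i:\cK_i\to\cH_i$ with $\psi(\eta_i\eps^*)=(R_i\eta_i)e^*+e(Q_i\overline{\eta_i})^*$. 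A deeper use of (b) (varying $y_1\perp\eta_1$ with an added $\gamma e$-component) together with positivity of $\psi(\eta_i\eta_i^*)$ forces the $\cH_i$-component of $\psi(\eta_i\eta_i^*)$ to be a scalar multiple of $\eta_i\eta_i^*$ (respectively $\overline{\eta_2}\overline{\eta_2}^*$ when $i=2$, on account of the transposition in the $(2,2)$-slot of \eqref{MOhd}), while the $\cH_j$-component for $j\ne i$ becomes proportional to $\jed_{\cH_j}$ with coefficient depending only on $\|\eta_i\|^2$. Polarization yields constants $a_i,b_i\ge 0$ with
\begin{equation*}
\psi(X_{11})=(a_1 X_{11})\oplus(b_1\Tr(X_{11})\jed_{\cH_2}),\qquad \psi(X_{22})=(b_2\Tr(X_{22})\jed_{\cH_1})\oplus(a_2 X_{22}^\rT).
\end{equation*}

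Finally, tie all the parameters together. Substituting the partial form of $\psi$ into the hypothesis against the constrained family (c),
\begin{equation*}
\la y_1+\gamma e,\psi((\eta_1+\alpha\eps)(\eta_1+\alpha\eps)^*)(y_1+\gamma e)\ra=0\quad\text{whenever}\quad \la\eta_1,y_1\ra+\bar\alpha\gamma=0,
\end{equation*}
and varying $\eta_1,y_1,\alpha,\gamma$ under the constraint to separate independent terms, yields an overdetermined system of linear equations that forces $Q_1=0$, $R_1=\lambda\id_{\cK_1}$, and $a_1=b_1=\mu=\lambda$ for a single $\lambda\ge 0$. A symmetric argument on the $\eta_2$-branch of (c) gives $R_2=0$, $Q_2=\lambda\id_{\cK_2}$, and $a_2=b_2=\lambda$. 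Comparison with \eqref{MOhd} then yields $\psi=\lambda\,\phi_{\cK_1,\cK_2}$, completing the verification of \eqref{expo}. The main obstacle is precisely this last step: collecting enough independent scalar equations from the constrained family (c) to pin down each a priori free parameter (scalars $a_i,b_i,\mu$ and operators $R_i,Q_i$), while correctly handling the antilinear twist introduced by the transposition in the $(2,2)$-slot. This step is the higher-dimensional generalization of the computation in \cite{MO} that established exposedness of the scalar Miller--Olkiewicz map \eqref{MO}.
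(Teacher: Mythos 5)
Your overall architecture is the same as the paper's (verify \eqref{expo}, enumerate the zero set of $\phi_{\cK_1,\cK_2}$, extract the block structure of $\psi$ via Proposition \ref{sesq}, Lemma \ref{lemoff} and Corollary \ref{corRQ}, then pin down the parameters), and your expansion of $\la y,\phi_{\cK_1,\cK_2}(\eta\eta^*)y\ra$ is correct. But your catalogue of zero pairs is incomplete in a way that breaks the argument: you list only the families corresponding to \eqref{n00}, \eqref{0n0}--\eqref{0nn} and \eqref{nn0}--\eqref{n0n}, and you omit the family \eqref{nnn} in which $\eta_1\neq 0$, $\eta_2\neq 0$ \emph{and} $\alpha\neq 0$ simultaneously, with $y_1,y_2$ both forced to specific multiples of $\eta_1,\ov{\eta_2}$. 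This family is indispensable. Every zero pair in your families (a)--(d) with $\eta$ supported in $\cK_1$ has $y_2=0$ (and vice versa), so those pairs give \emph{no} constraint whatsoever on the $\cH_2$-block of $\psi(\eta_1\eta_1^*)$ — the quantity you call $b_1\Tr(X_{11})\jed_{\cH_2}$ — nor on the off-diagonal blocks $\Psi_{12}$, nor on the components $R_{21},Q_{12}$ of the operators from Corollary \ref{corRQ}. In the paper these are exactly the objects controlled by the identities \eqref{suma}--\eqref{R21xi2}, which are harvested from the pairs \eqref{nnn} by a polynomial-coefficient-matching argument in auxiliary scalars $\gamma_1,\gamma_2$, and then exploited in Observations \ref{RiQ}, \ref{diagtrzy}, \ref{diagdwa} and \ref{offdiag} (the last of these requires a further nontrivial determinant computation). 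Your claim that family (c) alone yields $b_1=\lambda$ therefore cannot be correct as stated: pairs of type (c) have $y_2=0$.

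Two smaller but related problems: (i) the assertion that ``the sesquilinear-form trick applied to (b) and (d) shows \ldots{} $\psi(\eta_1\eta_2^*)=0$'' is unsupported — from (d) one only learns that $\la e,\psi(\eta_0\eta_0'^*)e\ra=0$, i.e.\ that $\psi$ maps $B(\cK_1\oplus\cK_2)$ into $B(\cK_1\oplus\cK_2)$; the vanishing of the four blocks of $\psi(\eta_1\eta_2^*)$ is precisely conditions (C1)--(E2) of the paper and again needs \eqref{nnn}; and (ii) Lemma \ref{lemoff} cannot be applied to the three-fold decomposition $\cK_1\oplus\cK_2\oplus\cK_3$, because $\psi$ (like $\phi_{\cK_1,\cK_2}$ itself) does not satisfy $\psi(B(\cK_1))\subset B(\cH_1)$ — it is only applicable, via Corollary \ref{corRQ}, to the coarser splitting $(\cK_1\oplus\cK_2)\oplus\bC\eps$. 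To repair the proof you must add the seventh family of zero pairs and redo the final two stages along the lines of the paper's Observations \ref{RiQ}--\ref{offdiag}.
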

\begin{proof}
Any $\eta, y\in\cK$ can be uniquely represented as $\eta=\sum_{i=1}^3\eta_i$ and $ y=\sum_{j=1}^3 y_j$ where $\eta_i, y_i\in\cK_i$. We assume that $\eta_3=\alpha e$ and $y_3=\beta e$ and we will identify $\eta_3$ and $ y_3$ with numbers $\alpha$ and $\beta$ respectively.
Then
\beq
\eta\eta^*=\left(\ba{ccc}
\eta_1\eta_1^* & \eta_1\eta_2^* & \ov{\alpha}\eta_1\\
\eta_2\eta_1^* & \eta_2\eta_2^* & \ov{\alpha}\eta_2 \\
\alpha\eta_1^* & \alpha\eta_2^* & |\alpha|^2
\ea\right)
\eeq
and
\beq
\label{phixixi}
\phi_{\cK_1,\cK_2}(\eta\eta^*)=
\left(\ba{ccc}
\eta_1\eta_1^*+\Vert \eta_2\Vert^2\jed_{B(\cK_1)} & 0 &\ov{\alpha}\eta_1 \\
0 & \ov{\eta_2}\ov{\eta_2}^* + \Vert \eta_1\Vert^2\jed_{B(\cK_2)} & \alpha \ov{\eta_2} \\
\alpha \eta_1^* & \ov{\alpha} \ov{\eta_2}^* & |\alpha|^2
\ea
\right)
\eeq
If $\eta,y\in\cK$, then
\beg
\la y,\phi_{\cK_1,\cK_2}(\eta\eta^*) y\ra 
&=&
|\alpha|^2|\beta|^2 + 2\Re\,\ov{\alpha}\beta\la y_1,\eta_1\ra + 2\Re\,\alpha\beta\la y_2,\ov{\eta_2}\ra\label{kji}
\\
&&{}+|\la y_1,\eta_1\ra|^2 + \Vert \ov{\eta_2}\Vert^2\Vert  y_1\Vert^2 + |\la y_2,\ov{\eta_2}\ra|^2 + \Vert \eta_1\Vert^2\Vert  y_2\Vert^2. \nonumber
\eeg
One can directly compute that
\beq
\label{cZ}
\la y,\phi_{\cK_1,\cK_2}(\eta\eta^*) y\ra=
\begin{cases}
\Vert \eta_1\Vert^2\Vert  y_2\Vert^2 +\Vert \eta_2\Vert^2\Vert  y_1\Vert^2 +|\la  y_1,\eta_1\ra|^2+|\la y_2,\ov{\eta_2}\ra|^2 & \mbox{if $
\alpha=0$,}
\\
|\alpha|^{-2}\left(\left||\alpha|^2\ov{\beta}+\ov{\alpha}\la y_1,\eta_1\ra+\alpha\la  y_2,\ov{\eta_2}\ra\right|^2+\Vert \alpha y_1\otimes \ov{\eta_2}-\ov{\alpha}\eta_1\otimes  y_2\Vert^2\right), & \mbox{if
$\alpha\neq 0$.}
\end{cases}
\eeq
We denote by $\cZ_\phi$ \label{stronaZ} the set of all pairs $\eta, y$ such that $\la  y,\phi_{\cK_1,\cK_2}(\eta\eta^*) y\ra=0$. It follows that $(\eta,y)\in \cZ_\phi$ if and only if one of the following conditions holds:
\beg
\alpha=0,\;\eta_1\neq 0,\; {\eta_2}= 0 & \textrm{and} &  y_1\perp \eta_1,\; y_2=0 \label{0n0}\\
\alpha=0,\;\eta_1= 0,\; {\eta_2}\neq 0 & \textrm{and} &  y_1=0,\; y_2\perp \ov{\eta_2} \label{00n}\\
\alpha=0,\;\eta_1\neq 0,\; {\eta_2}\neq 0 & \textrm{and} &  y_1=0,\;  y_2=0 \label{0nn}\\
\alpha\neq 0,\; \eta_1=0,\; {\eta_2}=0 & \textrm{and} & \beta=0 \label{n00}\\
\alpha\neq 0,\; \eta_1\neq 0,\; {\eta_2}=0 & \textrm{and} & \la \eta_1, y_1\ra=-\ov{\alpha}\beta,\;  y_2=0 \label{nn0}\\
\alpha\neq 0,\; \eta_1=0,\; {\eta_2}\neq 0 & \textrm{and} &  y_1=0,\; \la \ov{\eta_2}, y_2\ra=-\alpha\beta \label{n0n}\\
\alpha\neq 0,\; \eta_1\neq 0,\; {\eta_2}\neq 0 & \textrm{and} &
y_1=-\dfrac{\ov{\alpha}\beta}{\Vert \eta_1\Vert^2+\Vert {\eta_2}\Vert^2} \eta_1, \;
y_2=-\dfrac{\alpha\beta}{\Vert \eta_1\Vert^2+\Vert {\eta_2}\Vert^2}\ov{\eta_2}
\label{nnn}
\eeg

We will show that the map \eqref{e:MOgen} satisfies condition \eqref{expo}. Assume $\psi: B(\cK)\to B(\cK)$ is a positive map such that
\beq\label{basic}
\la y,\psi(\eta\eta^*)y\ra=0
\eeq
for every pair $(\eta,y)\in\cZ_\phi$. We will show that $\psi\in\bR_+\phi_{\cK_1,\cK_2}$. The rest of the proof is divided onto several observations.

\begin{step}\label{st_inclusions}
$\psi(B(\cK_1\oplus\cK_2))\subset B(\cK_1\oplus\cK_2)$ and $\psi(B(\cK_3))\subset B(\cK_3)$.
\end{step}
\begin{proof}
In order to show the first inclusion it is enough to prove that for any $\eta\in \cK_1\oplus \cK_2$, $\psi(\eta\eta^*)\subset B(\cK_1\oplus \cK_2)$. It follows from \eqref{0n0} -- \eqref{0nn} that $(\eta_1+\eta_2,\beta e)\in\cZ_\phi$ for any $\eta_1\in\cK_1$, $\eta_2\in\cK_2$ and $\beta\in\bC$. Thus $\la e,\psi(\eta\eta^*),e\ra=0$ for $\eta\in\cK_1\oplus\cK_2$. Since $\psi(\eta\eta^*)$ is a positive operator, $e\in\ker\psi(\eta\eta^*)$ and the range of $\psi(\eta\eta^*)$ is contained in $e^\perp=\cK_1\oplus\cK_2$.

The second inclusion follows simillarly form the fact that $(\alpha \eps, \eta)\in\cZ_\phi$ for any $\alpha\in\bC$ and $\eta\in\cK_1\oplus\cK_2$ (cf. \eqref{n00}).
\end{proof}
It follows from the above observation that $\psi(ee^*)=\lambda ee^*$ for some $\lambda\geq 0$.
\begin{step}\label{form}
There are sesqulinear vector valued forms $\Psi_{kl}:(\cK_1\oplus\cK_2)\times(\cK_1\oplus\cK_2)\to B(\cK_l,\cK_k)$ for $k,l=1,2$ and linear maps $R_k,Q_k:\cK_1\oplus\cK_2\to\cK_k$ for $k=1,2$ such that
\beq
\psi(\eta\eta^*)=
\left(\ba{ccc}
\Psi_{11}(\eta_0,\eta_0) & \Psi_{12}(\eta_0,\eta_0) & \ov{\alpha}R_1\eta_0+\alpha Q_1\ov{\eta_0} \\
\Psi_{21}(\eta_0,\eta_0) & \Psi_{22}(\eta_0,\eta_0) & \ov{\alpha}R_2\eta_0 +\alpha Q_2\ov{\eta_0} \\
\alpha (R_1\eta_0)^*+\ov{\alpha}(Q_1\ov{\eta_0})^* & \alpha (R_2\eta_0)^* +\ov{\alpha}(Q_2\ov{\eta_0})^* & \lambda|\alpha|^2
\ea\right)\label{gen}
\eeq
for any $\eta\in\cK$ where $\eta=\eta_0+\alpha e$ for a unique $\eta_0\in\cK_1\oplus\cK_2$ and $\alpha\in\bC$.

Moreover, for every $\eta_0,\eta_0'\in\cK_1\oplus \cK_2$,
\beq
\Psi_{kl}(\eta_0,\eta_0')^*=\Psi_{lk}(\eta_0',\eta_0),\qquad k,l=1,2.
\eeq
\end{step}
\begin{proof}
It follows from previous observation that $\psi(\eta_0{\eta_0'}^*)\in B(\cK_1\oplus\cK_2)=\bigoplus_{k,l=1}^2B(\cK_l,\cK_k)$ for any $\eta_0,\eta_0'\in\cK_1\oplus\cK_2$. We define
\beq\label{Psikl}
\Psi_{kl}(\eta_0',\eta_0)=W_k^*\psi(\eta_0{\eta_0'}^*)W_l
\eeq
for $k,l=1,2$, where $W_1,W_2$ are embedding of $\cK_1,\cK_2$ into $\cK$. Clearly $\Psi_{kl}$ are sesquilinear vector valued forms and $\psi(\eta_0{\eta_0'}^*)=\sum_{k,l=1}^2\Psi_{kl}(\eta_0',\eta_0)$, i.e.
\beq
\psi(\eta_0{\eta_0'}^*)=\left(
\ba{ccc}
\Psi_{11}(\eta_0',\eta_0) & \Psi_{12}(\eta_0',\eta_0) & 0 \\
\Psi_{21}(\eta_0',\eta_0) & \Psi_{22}(\eta_0',\eta_0) & 0 \\
0 & 0 & 0 \ea\right)\label{xi0xi0}
\eeq
Further, it follows from Observation \ref{st_inclusions} and Corollary \ref{corRQ} that there are linear maps $R,Q:\cK_1\oplus\cK_2\to\cK_1\oplus\cK_2$ such that $\phi(\eta_0e^*)=(R\eta_0)e^*+e(Q\ov{\eta_0})^*$ for any $\eta_0\in\cK_1\oplus\cK_2$. Define $R_k=F_k^*P$ and $Q_k=F_k^*Q$ for $k=1,2$. Hence
\beq
\psi(\eta_0e^*)=\left(
\ba{ccc}
0 & 0 & R_1\eta_0 \\
0 & 0 & R_2\eta_0 \\
Q_1\ov{\eta_0} & Q_2\ov{\eta_0} & 0
\ea\right)
\label{xie}
\eeq
Having $\psi(\eta\eta)=\psi(\eta_0\eta_0^*)+\ov{\alpha}\psi(\eta_0e^*)+\alpha\psi(e\eta_0^*)+|\alpha|^2\psi(ee^*)$ and taking into account \eqref{xi0xi0}, \eqref{xie}, and  $\psi(ee^*)=\lambda ee^*$ one arrives at \eqref{gen}. 

The second part of the observation follows directly from the definition \eqref{Psikl}.
\end{proof}
In the sequel we will use operators $R_{ik}=R_i\big|_{\cK_k}$ and $Q_{ik}=Q_i\big|_{\cK_k}$ for $i,k=1,2$.
In order to complete the proof of the theorem we need to verify the following conditions:
\label{tabela}
\begin{align*}
\text{(A1)} &\quad
\Ps{1}{1}{1}{1}=\lambda \eta_1\eta_1^*, & 
\text{(A2)} &\quad
\Ps{2}{2}{2}{2}=\lambda \ov{\eta_2}\ov{\eta_2}^*, \\ 
\text{(B1)} &\quad
\Ps{1}{1}{2}{2}=\lambda \Vert \ov{\eta_2}\Vert^2 \jed_{B(\cK_1)}, & 
\text{(B2)} &\quad
\Ps{2}{2}{1}{1}=\lambda \Vert \eta_1\Vert^2 \jed_{B(\cK_2)}, \\
\text{(C1)} &\quad
\Ps{1}{1}{1}{2}=\Ps{1}{1}{2}{1}=0, & 
\text{(C2)} &\quad
\Ps{2}{2}{1}{2}=\Ps{2}{2}{2}{1}=0, \\ 
\text{(D1)} &\quad
\Ps{1}{2}{1}{1}=0, &
\text{(D2)} &\quad
\Ps{1}{2}{2}{2}=0, \\
\text{(E1)} &\quad
\Ps{1}{2}{1}{2}=0, &
\text{(E2)} &\quad
\Ps{1}{2}{2}{1}=0, \\
\text{(F1)} &\quad
R_{11}=\lambda \jed_{B(\cK_1)}, &
\text{(F2)} &\quad
Q_{22}=\lambda \jed_{B(\cK_2)}, \\
\text{(G1)} &\quad
R_{22}=0, &
\text{(G2)} &\quad
Q_{11}=0, \\
\text{(H1)} &\quad
R_{12}=0, &
\text{(H2)} &\quad
Q_{21}=0, \\
\text{(J1)} &\quad
R_{21}=0, &
\text{(J2)} &\quad
Q_{12}=0, \\
\end{align*}
where $\eta_1\in\cK_1$ and $\eta_2\in\cK_2$ are arbitrary.

%
\begin{step}\label{lambdazero}
If $\lambda=0$, then $R_{ik}=0$ and $Q_{ik}=0$ for every $i,k=1,2$.
\end{step}
\begin{proof}
Let $\eta=\eta_0+\alpha e$ for some $\eta_0\in\cK_1\oplus\cK_2$, $\alpha\in\bC$. Positivity of the matrix
\beq
\psi(\eta\eta^*)=
\left(\ba{ccc}
\Psi_{11}(\eta_0,\eta_0) & \Psi_{12}(\eta_0,\eta_0) & \ov{\alpha}R_1\eta_0+\alpha Q_1\ov{\eta_0} \\
\Psi_{21}(\eta_0,\eta_0) & \Psi_{22}(\eta_0,\eta_0) & \ov{\alpha}R_2\eta_0 +\alpha Q_2\ov{\eta_0} \\
\alpha (R_1\eta_0)^*+\ov{\alpha}(Q_1\ov{\eta_0})^* & \alpha (R_2\eta_0)^* +\ov{\alpha}(Q_2\ov{\eta_0})^* & 0
\ea\right)
\eeq
(c.f. \eqref{gen}) implies $\ov{\alpha}R_i\eta_0+\alpha Q_i\ov{\eta_0}=0$ for $i=1,2$. Since it holds for any $\alpha$, we conclude that $R_i\eta_0=0$ and $Q_i\ov{\eta_0}=0$. As $\eta_0$ is arbitrary, the statement follows.
\end{proof}

%
\begin{step}\label{diag}
For $\eta_1\in \cK_1$ and $\eta_2\in\cK_2$,
\beq\label{Psi1111i2222}
\Psi_{11}(\eta_1,\eta_1)=\lambda \eta_1\eta_1^*,\quad\qquad
\Psi_{22}(\eta_2,\eta_2)=\lambda \ov{\eta_2}\ov{\eta_2}^*.
\eeq
Moreover,
\beq
\label{R11Q22}
R_{11}=\lambda \jed_{B(\cK_1)}, \qquad\qquad\qquad\qquad
Q_{22}=\lambda \jed_{B(\cK_2)}.
\eeq
and
\beq
\label{Q11R22}
Q_{11}=0, \qquad\qquad\qquad\qquad\qquad
R_{22}=0.
\eeq
\end{step}
\begin{proof}
Let $\eta_1\in\cK_1$ and $\eta_1\neq 0$. It follows from \eqref{0n0} that $(\eta_1, y_1)\in\cZ_\phi$ for any $y_1\in\cK_1\cap\eta_1^\perp$.
Thus \eqref{basic} leads to $\la y_1,\Psi_{11}(\eta_1,\eta_1) y_1\ra=0$.
Since $\Psi_{11}(\eta_1,\eta_1)$ is a positive operator, its restriction to $\cK_1\cap\eta_1^\perp$ is zero. Therefore it is a nonnegative multiple of $\eta_1\eta_1^*$, say $\Psi_{11}(\eta_1,\eta_1)=\mu\xi_1\xi_1^*$, $\mu\geq 0$.
Now, for $\alpha\in\bC$, let $\eta=\eta_1+\alpha e$ and $ y=-\ov{\alpha}\eta_1+\rho+\Vert \xi_1\Vert^2e$, where $\rho\in\cK_1\cap\eta_1^\perp$. Then $(\eta, y)\in\cZ_\phi$ (c.f. \eqref{nn0}). Observe that,
\beq\label{genmu}
\psi(\eta\eta^*)=
\left(\ba{ccc}
\mu\eta_1\eta_1^* & \Psi_{12}(\eta_1,\eta_1) & \ov{\alpha}R_{11}\eta_1+\alpha Q_{11}\ov{\eta_1} \\
\Psi_{21}(\eta_1,\eta_1) & \Psi_{22}(\eta_1,\eta_1) & \ov{\alpha}R_{21}\eta_1+\alpha Q_{21}\ov{\eta_1} \\
\alpha(R_{11}\eta_1)^*+\ov{\alpha}(Q_{11}\ov{\eta_1})^* & \alpha(R_{21}\eta_1)^*+\ov{\alpha}(Q_{21}\ov{\eta_1})^* & \lambda|\alpha|^2
\ea\right)
\eeq
and
\beg
\lefteqn{\la y,\psi(\eta\eta^*)y \ra=}\nonumber\\
&=&
(\lambda+\mu)|\alpha|^2\Vert \eta_1\Vert^4 -2|\alpha|^2\Vert \eta_1\Vert^2\Re\la \eta_1,R_{11}\eta_1\ra \label{ui}\\
&&{}-2\Vert \eta_1\Vert^2\Re\,\alpha^2\la \eta_1,Q_{11}\ov{\eta_1}\ra\label{uj}\\
&&{}+2\Vert \eta_1\Vert^2\Re\,\alpha(\la\rho,Q_{11}\ov{\eta_1}\ra+\la R_{11}\eta_1,\rho\ra).\label{uk}
\eeg
The above is zero for any $\alpha\in\bC$. The expression in line \eqref{ui} is independent on the phase of $\alpha$, so the sum of lines \eqref{uj} and \eqref{uk} must be independent on the phase of $\alpha$ too. It is possible only if the following conditions simultaneously hold
\beq\label{hg}
\Re\la \eta_1,R_{11}\eta_1\ra=\frac{1}{2}(\lambda+\mu)\Vert \eta_1\Vert^2,
\eeq
\beq\label{yu}
\la \eta_1,Q_{11}\ov{\eta_1}\ra=0,
\eeq
\beq\label{df}
\la\rho,Q_{11}\ov{\eta_1}\ra+\la R_{11}\eta_1,\rho\ra=0.
\eeq
Since the last equality holds for any $\rho\in\cK_1\cap\eta_1^\perp$, one can replace $\rho$ by $i\rho$. So,
\beq
\label{dg}
-i\la\rho,Q_{11}\ov{\eta_1}\ra+i\la R_{11}\eta_1,\rho\ra=0.
\eeq
Combining \eqref{df} and \eqref{dg} yields $\la\rho,Q_{11}\ov{\eta_1}\ra=0$ and $\la\rho,R_{11}\eta_1\ra=0$ for any $\rho \in\cK_1\cap\eta_1^\perp$. Thus both $R_{11}\eta_1$ and $Q_{11}\ov{\eta_1}$ are multiples of $\eta_1$. Then, it follows from \eqref{yu} that $Q_{11}\ov{\eta_1}=0$ and the first condition in \eqref{Q11R22} is proved.

Now, apply Proposition \ref{blockpos} for the matrix \eqref{genmu} and vectors $y_1=\eta_1$, $y_2=0$ and $y_3=e$. It follows that the scalar matrix
\beq
\left(\ba{ccc}
\mu\Vert \eta_1\Vert^4 & 0 & \ov{\alpha}\la \eta_1,R_{11}\eta_1\ra \\
0 & 0 & 0 \\
\alpha\la R_{11}\eta_1,A_1\eta_1\ra & 0 & \lambda|\alpha|^2
\ea\right)\eeq
is positive. Thus
\beq\label{wyzn}
|\la \eta_1,R_{11}\eta_1\ra|^2\leq \lambda\mu\Vert \eta_1\Vert^4,
\eeq
and from \eqref{hg} we conclude
$\frac{1}{4}(\lambda+\mu)^2\Vert \eta_1\Vert^4=(\Re\la \eta_1,R_{11}\eta_1\ra)^2\leq |\la \eta_1,R_{11}\eta_1\ra|^2\leq\lambda\mu\Vert \eta_1\Vert^4$.
The inequality $\frac{1}{4}(\lambda+\mu)^2\leq\lambda\mu$ implies $\mu=\lambda$, so \eqref{Psi1111i2222} is proved.

It remains to show that $R_{11}\eta_1=\lambda \eta_1$. We showed already that $R_{11}\eta_1=\kappa \eta_1$ for some $\kappa\in\bC$. Now, \eqref{hg} implies $\Re\,\kappa=\lambda$ while \eqref{wyzn} yields $|\kappa|\leq \lambda$. Hence $\kappa=\lambda$ and the proof of the first equality in \eqref{R11Q22} is finished.

The proof of the second parts in \eqref{Psi1111i2222}, \eqref{R11Q22}, \eqref{Q11R22} is similar.
\end{proof}

\begin{step}\label{diag12}
It follows that
\beq\label{Q21R12}
Q_{21}=0
\quad\qquad\qquad\quad
R_{12}=0.
\eeq
Moreover,
\beq\label{Psi1211}
\Psi_{12}(\eta_1,\eta_1)=\eta_1(R_{21}\eta_1)^*
\quad\quad\quad
\Psi_{12}(\eta_2,\eta_2)=(Q_{12}\ov{\eta_2})\ov{\eta_2}^*,
\eeq
for any $\eta_1\in\cK_1$ and $\eta_2\in\cK_2$.
\end{step}
\begin{proof}
Consider $\eta=\eta_1+\alpha e$, $\eta_1\in\cK_1$. According to previous observations, the matrix \eqref{genmu} is of the form
\beq\label{gby}
\psi(\eta\eta^*)=
\left(\ba{ccc}
\lambda\eta_1\eta_1^* & \Psi_{12}(\eta_1,\eta_1) & \ov{\alpha}\lambda \eta_1 \\
\Psi_{12}(\eta_1,\eta_1)^* & \Psi_{22}(\eta_1,\eta_1) & \ov{\alpha}R_{21}\eta_1+\alpha Q_{21}\ov{\eta_1} \\
\alpha\lambda \eta_1^* & \alpha(R_{21}\eta_1)^*+\ov{\alpha} (Q_{21}\ov{\eta_1})^* & \lambda|\alpha|^2
\ea\right)
\eeq
For $\lambda=0$, it reduces to
\beq
\left(\ba{ccc}
0 & \Psi_{12}(\eta_1,\eta_1) & 0 \\
\Psi_{12}(\eta_1,\eta_1)^* & \Psi_{22}(\eta_1,\eta_1) & \ov{\alpha}R_{21}\eta_1+\alpha Q_{21}\ov{\eta_1} \\
0 & \alpha(R_{21}\eta_1)^*+\ov{\alpha} (Q_{21}\ov{\eta_1})^* & 0
\ea\right).
\eeq
It is a positive block-matrix, so its upper-left $2\times 2$ principal minor is also positive. Since one of its diagonal terms is zero, its off-diagonal ones should vanish. Hence $\Ps{1}{2}{1}{1}=0$. Let us remind that $R_{21}=0$ due to Observation \ref{lambdazero}, so the first part of \eqref{Psi1211} is satisfied.

If $\lambda>0$, then we apply Lemma \ref{blockpos} to derive that for any $y_1\in\cK_1$ and $y_2\in\cK_2$, the following scalar matrix is positive definite.
$$
\left(\ba{ccc}
\lambda|\la y_1,\eta_1\ra|^2 & \la y_1,\Psi_{12}(\eta_1,\eta_1)y_2\ra & \ov{\alpha}\lambda\la y_1,\eta_1\ra \\
\la\Psi_{12}(\eta_1,\eta_1)y_2,y_1\ra & \la y_2,\Psi_{22}(\eta_1,\eta_1)y_2\ra & \ov{\alpha}\la y_2,R_{21}\eta_1\ra+\alpha\la y_2,Q_{21}\ov{\eta_1}\ra \\
\alpha\lambda\la \eta_1,y_1\ra & \alpha\la R_{21}\eta_1,y_2\ra+\ov{\alpha}\la Q_{21}\ov{\eta_1},y_2\ra & \lambda|\alpha|^2
\ea\right)
$$
Straight calculation shows that its determinant is equal to
$$
-\lambda \left|\ov{\alpha}\la y_1,\eta_1\ra\la Q_{21}\ov{\eta_1},y_2\ra +\alpha\left(\la y_1,\eta_1\ra\la R_{21}\eta_1, y_2\ra-\la y_1,\Psi_{12}(\eta_1,\eta_1)y_2\ra\right)\right|^2
$$
Since it is nonnegative for any $\alpha\in\bC$ and every vectors $y_i\in\cK_i$, $i=1,2$, $\Psi_{12}(\eta_1,\eta_1)=\eta_1(R_{21}\eta_1)^*$ and $\eta_1(Q_{21}\ov{\eta_1})^*=0$. The latter holds for any $\eta_1\in\cK_1$, hence $Q_{21}=0$. Thus we proved first parties of \eqref{Psi1211} and \eqref{Q21R12}.

The remaining parts are proved similarly by considering the matrix $\psi(\eta\eta^*)$ for $\eta=\eta_2+\alpha e$ where $\eta_2\in\cK_2$.
\end{proof}

Before next observations, let us study some further consequences of the condition \eqref{basic}.
Let $\eta=\eta_1+\eta_2+\alpha e$, $ y= y_1+ y_2+\beta e$, $\eta_i, y_i\in\cK_i$, $i=1,2$, be such that $\eta_1\neq 0$, $\eta_2\neq 0$, $\alpha\neq 0$, and $ y_1=-\ov{\alpha}\eta_1$, $ y_2=-\alpha \ov{\eta_2}$, $\beta=\Vert \eta_1\Vert^2+\Vert {\eta_2}\Vert^2$. It follows from \eqref{nnn}  that $(\eta, y)\in\cZ_\phi$. By a sequence of elementary calculations one can check that
\beq\label{jpl}
\la y,\psi(\eta\eta^*) y\ra = |\alpha|^2 c_1 + 2\Re\,\alpha^2c_2
,
\eeq
where
\beg
c_1 &=&  \la \eta_1,\Psi_{11}(\eta_2,\eta_2)\eta_1\ra+ \la \ov{\eta_2},\Psi_{22}(\eta_1,\eta_1)\ov{\eta_2}\ra - 2\lambda\Vert \eta_1\Vert^2 \Vert {\eta_2} \Vert^2   \\
&&\nonumber
{}+ \la \eta_1,\Psi_{11}(\eta_1,\eta_2)\eta_1\ra + \la \eta_1,\Psi_{11}(\eta_2,\eta_1)\eta_1\ra  
{}+  \la \ov{\eta_2},\Psi_{22}(\eta_1,\eta_2)\ov{\eta_2}\ra + \la \ov{\eta_2},\Psi_{22}(\eta_2,\eta_1)\ov{\eta_2}\ra \\[3pt]
c_2&=&
\la \eta_1,\Psi_{12}(\eta_1,\eta_2)\ov{\eta_2}\ra + \la \eta_1,\Psi_{12}(\eta_2,\eta_1)\ov{\eta_2}\ra 
{}- \Vert \eta_1\Vert^2\la \eta_1, Q_{12}\ov{\eta_2}\ra - \Vert {\eta_2}\Vert^2\la R_{21}\eta_1, \ov{\eta_2}\ra
\eeg
Since \label{strona} the expression \eqref{jpl} is zero for every $\alpha$, $c_p=0$ for $p=1,2$. Note, that if we replace $\eta_1,\eta_2$ by $\gamma_1\eta_1,\gamma_2\eta_2$, where $\gamma_1,\gamma_2\in\bC$ are arbitrary, then the whole expressions are still equal to zero. For instance, for 'modified' $c_1$ we obtained the following equality
\be
0&=&
|\gamma_1|^2|\gamma_2|^2\Big(\la \eta_1,\Psi_{11}(\eta_2,\eta_2)\eta_1\ra+ \la \ov{\eta_2},\Psi_{22}(\eta_1,\eta_1)\ov{\eta_2}\ra - 2\lambda\Vert \eta_1\Vert^2 \Vert \ov{\eta_2} \Vert^2\Big)   \\
&&
{}+ \ov{\gamma_1}|\gamma_1|^2\gamma_2\la \eta_1,\Psi_{11}(\eta_1,\eta_2)\eta_1\ra + \gamma_1|\gamma_1|^2\ov{\gamma_2}\la \eta_1,\Psi_{11}(\eta_2,\eta_1)\eta_1\ra  \\
&&
{}+ \ov{\gamma_1}\gamma_2|\gamma_2|^2\la \ov{\eta_2},\Psi_{22}(\eta_1,\eta_2)\ov{\eta_2}\ra + \gamma_1\ov{\gamma_2}|\gamma_2|^2\la \ov{\eta_2},\Psi_{22}(\eta_2,\eta_1)\ov{\eta_2}\ra
\ee
As the above equality holds for any $\gamma_1,\gamma_2$, all coefficients of this complex polynomial in the variables $\gamma_1,\gamma_2$ should vanish, i.e.
\beq
\label{suma}
\la \eta_1,\Psi_{11}(\eta_2,\eta_2)\eta_1\ra+ \la \ov{\eta_2},\Psi_{22}(\eta_1,\eta_1)\ov{\eta_2}\ra = 2\lambda\Vert \eta_1\Vert^2 \Vert {\eta_2} \Vert^2
\eeq
\beq
\label{111121}
\la \eta_1,\Psi_{11}(\eta_1,\eta_2)\eta_1\ra = \la \eta_1,\Psi_{11}(\eta_2,\eta_1)\eta_1\ra=0
\eeq
\beq
\label{222122}
\la \ov{\eta_2},\Psi_{22}(\eta_1,\eta_2)\ov{\eta_2}\ra = \la \ov{\eta_2},\Psi_{22}(\eta_2,\eta_1)\ov{\eta_2}\ra =0
\eeq
By the similar arguments for $c_2$, we obtain
\beq
\label{112122}
\la \eta_1,\Psi_{12}(\eta_1,\eta_2)\ov{\eta_2}\ra =0
\eeq
\beq
\label{112212}
\la \eta_1,\Psi_{12}(\eta_2,\eta_1)\ov{\eta_2}\ra =0
\eeq
\beq
\label{Q12xi1}
\la \eta_1, Q_{12}\ov{\eta_2}\ra =0
\eeq
\beq
\label{R21xi2}
\la R_{21}\eta_1, \ov{\eta_2}\ra = 0
\eeq
%
%
\begin{step}\label{RiQ}
It follows that
\beq\label{R21Q12}
R_{21}=0,
\qquad\qquad\qquad\qquad\qquad\qquad
Q_{12}=0
\eeq
and consequently
\beq\label{Ps1211}
\Ps{1}{2}{1}{1}=0,\qquad\qquad\qquad\qquad\Ps{1}{2}{2}{2}=0.
\eeq
\end{step}
\begin{proof}
Let us consider equation \eqref{R21xi2}.
Since it holds for any $\eta_1\in\cK_1$, $\eta_2\in\cK_2$, 
$R_{21}=0$. Similarly, \eqref{Q12xi1} 
implies $Q_{12}=0$. Thus \eqref{R21Q12} is proved. Now, take into account \eqref{Psi1211}, and \eqref{Ps1211} follows.
\end{proof}
%
\begin{step}\label{diagtrzy}
For any $\eta_1\in\cK_1$ and $\eta_2\in\cK_2$
\beq\label{P11122212}
\Ps{1}{1}{1}{2}=0=\Ps{1}{1}{2}{1},\qquad\qquad\Ps{2}{2}{1}{2}=0=\Ps{2}{2}{2}{1}.
\eeq
\end{step}
\begin{proof}
Let $\eta=\eta_1+\eta_2+e$. Then due to Observations \ref{diag}, \ref{diag12}, \ref{RiQ}
\beq
\psi(\eta\eta^*)=
\left(\ba{c:c:c}
\begin{aligned}&\lambda\eta_1\eta_1^*+\Ps{1}{1}{2}{2}\\&{}+\Ps{1}{1}{1}{2}+\Ps{1}{1}{2}{1}\end{aligned}
& \Ps{1}{2}{1}{2}+\Ps{1}{2}{2}{1} & \lambda \eta_1 \\[3mm]\hdashline\\[-3mm]
\Ps{2}{1}{1}{2}+\Ps{2}{1}{2}{1} & \begin{aligned}&\Ps{2}{2}{1}{1} +\lambda\ov{\eta_2}\ov{\eta_2}^*\\&{} +\Ps{2}{2}{1}{2} +\Ps{2}{2}{2}{1}\end{aligned} & \lambda \ov{\eta_2} \\[3mm]\hdashline\\[-3mm]
\lambda \eta_1^* & \lambda \ov{\eta_2}^* & \lambda
\ea\right)
\eeq
Apply Proposition \ref{blockpos} for $y_1\in\cK_1$ arbitrary, $y_2=0$ and $y_3=e$. It follows that the following scalar matrix is positive
\beq\label{mac}
\left(\ba{ccc}
\lambda|\la y_1,\Aj\ra|^2 + 2\Re\la y_1,\Ps{1}{1}{1}{2}y_1\ra + \la y_1,\Ps{1}{1}{2}{2}y_1\ra & 0 & \lambda\la y_1,\Aj\ra \\
0 & 0 & 0 \\
\lambda\la\Aj,y_1\ra & 0 & \lambda
\ea\right)
\eeq
If $\lambda=0$, then positivity of the matrix is equivalent to the inequality
\beq\label{lazero}
2\Re\la y_1,\Ps{1}{1}{1}{2} y_1\ra + \la y_1,\Ps{1}{1}{2}{2} y_1\ra\geq 0.
\eeq
If we replace $\eta_1$ by $te^{i\theta}\eta_1$, where $t\in\bR$, and $\theta$ is such that $e^{i\theta}\la y_1,\Ps{1}{1}{1}{2}y_1\ra=|\la y_1,\Ps{1}{1}{1}{2}y_1\ra|$, the inequality still holds. Thus
$$
2t|\la y_1,\Ps{1}{1}{1}{2}y_1\ra|+\la y_1,\Ps{1}{1}{2}{2}y_1\ra\geq 0
$$
for any $t\in\bR$ and, consequently $\la y_1,\Ps{1}{1}{1}{2}y_1\ra=0$. Since $y_1$ is arbitrary, the first part of \eqref{P11122212} follows.

Now, let $\lambda>0$. Since the matrix \eqref{mac} is positive, its $2\times 2$ minors are nonnegative. Hence, in particular,
$$
2\lambda\Re\la y_1,\Ps{1}{1}{1}{2} y_1\ra + \lambda\la y_1,\Ps{1}{1}{2}{2}y_1\ra\geq 0.
$$
But this condition is equivalent to \eqref{lazero}, so again it leads to the first part of \eqref{P11122212}.

The second part of \eqref{P11122212} is proved analogously by considering $y_1=0$ and $y_2$ arbitrary.
\end{proof}

\begin{step}\label{diagdwa}
For any $\eta_1\in\cK_1$ and $\eta_2\in\cK_2$,
\beq\label{P11222211}
\Psi_{11}(\eta_2,\eta_2)=\lambda\Vert {\eta_2}\Vert^2 \jed_{B(\cK_1)},\qquad\quad\qquad
\Psi_{22}(\eta_1,\eta_1)=\lambda\Vert \eta_1\Vert^2 \jed_{B(\cK_2)}.
\eeq
\end{step}
\begin{proof}
Let $\eta_1\in\cK_1$, $\eta_2\in\cK_2$ be arbitrary, $\alpha=1$,
and $\eta=\eta_1+\eta_2+e$. Due to previous observations 
\beq
\psi(\eta\eta^*)=
\left(\ba{ccc}
\lambda\eta_1\eta_1^*+\Ps{1}{1}{2}{2} & \Ps{1}{2}{1}{2}+\Ps{1}{2}{2}{1} & \lambda \eta_1 \\
\Ps{2}{1}{1}{2}+\Ps{2}{1}{2}{1} & \Ps{2}{2}{1}{1} +\lambda\ov{\eta_2}\ov{\eta_2}^* & \lambda \ov{\eta_2} \\
\lambda \eta_1^* & \lambda \ov{\eta_2}^* & \lambda
\ea\right)
\eeq
We apply Proposition \ref{blockpos} for $y_1=\eta_1$, $y_2=\ov{\eta_2}$ and $\beta=1$. If one take into account equalities 
\eqref{112122}, \eqref{112212} then it is clear that the scalar matrix $(\la y_i,\psi_{ij}(\eta\eta^*)y_j\ra)_{i,j}$ reduces to
\beq\label{redu}
\left(\ba{ccc}
\lambda\Vert \eta_1\Vert^4+\la \eta_1,\Ps{1}{1}{2}{2} \eta_1\ra & 0 & \lambda\Vert \eta_1\Vert^2 \\
0 & \la \ov{\eta_2},\Ps{2}{2}{1}{1}\ov{\eta_2}\ra+\lambda\Vert {\eta_2}\Vert^4 & \lambda \Vert {\eta_2}\Vert^2 \\
\lambda\Vert \eta_1\Vert^2 & \lambda \Vert {\eta_2}\Vert^2 & \lambda
\ea\right)
\eeq
If $\lambda=0$, then positivity of the above matrix implies that both
$\la \eta_1,\Ps{1}{1}{2}{2} \eta_1\ra$ and 
$\la \ov{\eta_2},\Ps{2}{2}{1}{1}\ov{\eta_2}\ra$ 
should be nonnegative.
Therefore the condition \eqref{suma} implies
$$
\la \eta_1,\Ps{1}{1}{2}{2} \eta_1\ra= \la \ov{\eta_2},\Ps{2}{2}{1}{1}\ov{\eta_2}\ra=0
,$$ and \eqref{P11222211} follows.

For $\lambda>0$ we use the fact that the determinant of the matrix \eqref{redu} is nonnegative. By strightforward calculations we obtain the inequality
$
\la \eta_1,\Ps{1}{1}{2}{2} \eta_1\ra\la \ov{\eta_2},\Ps{2}{2}{1}{1}\ov{\eta_2}\ra\geq \lambda^2\Vert \eta_1\Vert^4\Vert {\eta_2}\Vert^4.
$ 
Combination with \eqref{suma} leads to
$$
\la \eta_1,\Ps{1}{1}{2}{2} \eta_1\ra= \la \ov{\eta_2},\Ps{2}{2}{1}{1}\ov{\eta_2}\ra=\lambda\Vert \eta_1\Vert^2\Vert{\eta_2}\Vert^2.
$$
Since these equalities hold for any $\eta_1$ and $\eta_2$, we arrive at \eqref{P11222211}.
\end{proof}

\begin{step}\label{offdiag}
For $\eta_1\in\cK_1$ and $\eta_2\in\cK_2$,
\beq\label{1212}
\Ps{1}{2}{1}{2}=0,\qquad\qquad\Ps{1}{2}{2}{1}=0.
\eeq
\end{step}
\begin{proof}
If $\lambda=0$, then
$$
\psi(\eta\eta^*)=\left(\ba{ccc} 0 & \Ps{1}{2}{1}{2}+\Ps{1}{2}{2}{1} & 0 \\
\Ps{2}{1}{1}{2}+\Ps{2}{1}{2}{1} & 0 & 0 \\
0 & 0 & 0
\ea\right)
$$
for any $\eta\in\cK$. Hence, positivity of the matrix implies $\Ps{1}{2}{1}{2}+\Ps{1}{2}{2}{1}=0$. Using again the aforementioned substitution argument (cf. paragraph preceeding \eqref{suma} on page \pageref{strona}), we get \eqref{1212}.
Thus, we can assume $\lambda>0$.
Firstly, we will show that
\beq\label{12121}
\Ps{1}{2}{1}{2}^*\eta_1=0,\qquad\qquad\Ps{1}{2}{2}{1}\Ad=0.
\eeq
Let us prove the first equality. Observe that for a given $\eta_1\in\cK_1$, a map
$\cK_2\times\cK_2\ni(\eta_2,\eta_2')\mapsto \la \ov{\eta_2'},\Ps{1}{2}{1}{2}^*\Aj\ra$ is a sesqulinear form. Then, \eqref{112122} and polarization identity (cf. Proposition \ref{sesq}) imply $\la \ov{\eta_2'},\Ps{1}{2}{1}{2}^*\Aj\ra=0$ for every $\eta_2,\eta_2'\in\cK_2$.
Thus, $\Ps{1}{2}{1}{2}^*\Aj=0$ for every $\eta_1\in\cK_1$ and $\eta_2\in\cK_2$.
In order to prove the second equality in \eqref{12121} one need to consider a sesquilinear form $\cK_1\times\cK_1\ni(\eta_1',\eta_1)\mapsto \la \eta_1',\Ps{1}{2}{2}{1}\Ad\ra$.

Secondly, we will prove that
\beq\label{12122}
\Ps{1}{2}{1}{2}\Ad=0,\qquad\qquad\Ps{1}{2}{2}{1}^*\Aj=0.
\eeq
Let $\eta=\eta_1+\eta_2-ie$. Then, according to previous observations,
\beq
\psi(\eta\eta^*)=
\left(\ba{ccc}
\lambda\eta_1\eta_1^*+\lambda \Vert\eta_2\Vert^2\jed_{B(\cK_1)} & \Ps{1}{2}{1}{2}+\Ps{1}{2}{2}{1} & -i\lambda \eta_1 \\[2mm]
\Ps{2}{1}{1}{2}+\Ps{2}{1}{2}{1} & \lambda\ov{\eta_2}\ov{\eta_2}^* +\lambda\Vert\Aj\Vert^2\jed_{B(\cK_2)} & i\lambda \ov{\eta_2} \\[2mm]
i\lambda \eta_1^* & -i\lambda \ov{\eta_2}^* & \lambda
\ea\right).
\eeq
Further, apply Proposition \ref{blockpos} for $y_1=\Vert\eta_2\Vert^2\Aj+\lambda^{-1}\Ps{1}{2}{1}{2}\Ad$, $y_2=\Vert\Aj\Vert^2\Ad$ and $y_3=e$. We infer that the scalar matrix
\beq\label{macierzduza}
\left(\ba{c:c:c}
\begin{aligned}
\lambda \Vert\Aj\Vert^4\Vert\eta_2\Vert^4+\lambda\Vert\Aj\Vert^2\Vert\eta_2\Vert^6 \\
+\lambda^{-1}\Vert\eta_2\Vert^2\Vert\Ps{1}{2}{1}{2}\Ad\Vert^2
\end{aligned}
&
\lambda^{-1}\Vert\Aj\Vert^2\Vert\Ps{1}{2}{1}{2}\Ad\Vert^2 &
-i\lambda\Vert\Aj\Vert^2\Vert\eta_2\Vert^2 \\[2mm]\hdashline\\[-2mm]
\lambda^{-1}\Vert\Aj\Vert^2\Vert\Ps{1}{2}{1}{2}\Ad\Vert^2 &
\lambda \Vert\Aj\Vert^4\Vert\eta_2\Vert^4 
+ \lambda \Vert\Aj\Vert^6\Vert\eta_2\Vert^2
&
i\lambda\Vert\Aj\Vert^2\Vert\eta_2\Vert^2 \\[2mm]\hdashline\\[-2mm]
i\lambda\Vert\Aj\Vert^2\Vert\eta_2\Vert^2 &
-i\lambda\Vert\Aj\Vert^2\Vert\eta_2\Vert^2 & \lambda
\ea\right)
\eeq
is positive.
Let us verify formulas for coefficients of the above matrix.
Due to \eqref{112122}, $\Vert y_1\Vert^2=\Vert\Aj\Vert^2\Vert\eta_2\Vert^4+\lambda^{-2}\Vert\Ps{1}{2}{1}{2}\Ad\Vert^2$ and $\la\Aj,y_1\ra=\Vert\Aj\Vert^2\Vert\eta_2\Vert^2$. Thus the $(1,1)$-coefficient is equal to
\be
\lefteqn{\la y_1,(\lambda\eta_1\eta_1^*+\lambda \Vert\eta_2\Vert^2\jed_{B(\cK_1)})y_1\ra=}\\
&=&\lambda|\la\Aj,y_1\ra|^2+\lambda\Vert\eta_2\Vert^2\Vert y_1\Vert^2\\
&=&
\lambda\Vert\Aj\Vert^4\Vert\eta_2\Vert^4+\lambda\Vert\eta_2\Vert^2(\Vert\Aj\Vert^2 \Vert\eta_2\Vert^4+\lambda^{-2}\Vert\Ps{1}{2}{1}{2}\Ad\Vert^2)\\
&=&
\lambda\Vert\Aj\Vert^4\Vert\eta_2\Vert^4+\lambda\Vert\Aj\Vert^2\Vert\eta_2\Vert^6 +\lambda^{-1}\Vert\eta_2\Vert^2\Vert\Ps{1}{2}{1}{2}\Ad\Vert^2.
\ee
$(1,2)$-coefficient is calculated as follows
\beg
\lefteqn{\la y_1,\Ps{1}{2}{1}{2}y_2\ra+\la y_1,\Ps{1}{2}{2}{1})y_2\ra=}\nonumber\\
&=&
\Vert\Aj\Vert^2\Vert\eta_2\Vert^2\la\Aj,\Ps{1}{2}{1}{2}\Ad\ra + \lambda^{-1}\Vert\Aj\Vert^2\Vert\Ps{1}{2}{1}{2}\Ad\Vert^2 \label{fiu1} \\
&&{}
+ \Vert\Aj\Vert^2\la y_1,\Ps{1}{2}{2}{1}\Ad\ra \label{fiu2}
\\
&=&\lambda^{-1}\Vert\Aj\Vert^2\Vert\Ps{1}{2}{1}{2}\Ad\Vert^2\nonumber
\eeg
The first term in the line \eqref{fiu1} is zero due to \eqref{112122}, while the term in the line \eqref{fiu2} is zero because of \eqref{12121}. The form of remaining coefficients can be verified directly. Determinant of the matrix \eqref{macierzduza} is equal to
$$-\lambda\Vert\Aj\Vert^6\Vert\eta_2\Vert^4\Vert\Ps{1}{2}{1}{2}\Ad\Vert^2-\lambda^{-1}\Vert\Aj\Vert^4\Vert\Ps{1}{2}{1}{2}\Ad\Vert^4.$$
Since it must be nonnegative, $\Ps{1}{2}{1}{2}\Ad=0$. Thus the first part of \eqref{12122} is verified. The second part can be proved similarly. One should consider now
$y_1=\Vert\eta_2\Vert^2\Aj$, $y_2=\Vert\Aj\Vert^2\Ad+\lambda^{-1}\Ps{1}{2}{2}{1}^*\Aj$, $y_3=e$.

After all, we are ready to prove \eqref{1212}. For the first part, fix $\eta_1\in\cK_1$ and consider a sesqulinear vector valued form
$\cK_2\times\cK_2\ni(\eta_2',\eta_2)\mapsto\Ps{1}{2}{1}{2}\ov{\eta_2'}\in\cK_1$. According to Proposition \ref{sesq}, \eqref{12122} implies $\Ps{1}{2}{1}{2}\ov{\eta_2'}=0$
for every $\eta_2,\eta_2'\in\cK_2$. Hence, the first part of \eqref{1212} follows.
To show the second part one need to consider a sesquilinear vector valued form
$\cK_1\times\cK_1\ni(\eta_1,\eta_1')\mapsto\Ps{1}{2}{2}{1}^*\eta_1'\in\cK_1$ and use \eqref{12121} to argue that it is zero for every $\eta_1,\eta_1'$ and $\eta_2$.
\end{proof}

Now, let us come back to the conditions listed on page \pageref{tabela}. Below is the table which shows where each condition is proved.
\begin{center}
\begin{tabular}{lclcl}
(A1) & \& & (A2) & -- & eq. \eqref{Psi1111i2222} in Observation \ref{diag} \\
(B1) & \& & (B2) & -- & eq. \eqref{P11222211} in Observation \ref{diagdwa} \\
(C1) & \& & (C2) & -- & eq. \eqref{P11122212} in Observation \ref{diagtrzy} \\
(D1) & \& & (D2) & -- & eq. \eqref{Ps1211} in Observation \ref{RiQ} \\
(E1) & \& & (E2) & -- & eq. \eqref{1212} in Observation \ref{offdiag} \\
(F1) & \& & (F2) & -- & eq. \eqref{R11Q22} in Observation \ref{diag} \\
(G1) & \& & (G2) & -- & eq. \eqref{Q11R22} in Observation \ref{diag} \\
(H1) & \& & (H2) & -- & eq. \eqref{Q21R12} in Observation \ref{diag12} \\
(J1) & \& & (J2) & -- & eq. \eqref{R21Q12} in Observation \ref{RiQ}
\end{tabular}
\end{center}
Therefore $\psi=\lambda\phi_{\cK_1,\cK_2}$, and
the proof of Theorem \eqref{special} is finished.
\end{proof}

\begin{proof}[Proof of Theorem \ref{mainm}]
For a map $\phi$ given by \eqref{block} define $\ti{\cK}_1=\ran A_1\subset \cH_1$ and $\ti{\cK}_2=\ran \ov{A_2}\subset\cH_2$, where $\ov{A}=(A^*)^\rT$ for a linear operator $A$. 
Let $F_i:\ti{\cK}_i\to\cH_i$ be the isometric embedding of $\ti{\cK}_i$ into $\cH_i$ for $i=1,2$. Further, define $E_i:\ti{\cK}_i\to\cK_i$, $i=1,2$, by
$E_1=A_1^*F_1$ and $E_2=A_2^\rT{F_2}=(\ov{A_2})^*{F_2}$. Eventually, let $E:\ti{\cK}\to\cK$ and $F:\ti{\cK}\to\cH$ be linear operators defined by the following block-matrices
\beq
\label{EF}
E=\left(\ba{ccc}
\nu^{-1/4}E_1 & 0 & 0 \\ 0 & \nu^{1/4}E_2 & 0 \\ 0 & 0 & \id_\bC
\ea\right),
\qquad
F=\left(\ba{ccc}
\nu^{1/4}\e{1}F_1 & 0 & 0 \\ 0 & \nu^{-1/4}\e{2}\ov{F_2} & 0 \\ 0 & 0 & \id_\bC
\ea\right).
\eeq
Since both $F_1$ and $F_2$ are injective by the definition, $F$ is injective too. Moreover, both $F_1^*A_1^{}$ and $F_2^*\ov{A_2^{}}$ are surjective operators, so $E_1$ and $E_2$ being their hermitian conjugations, are injective operators. Hence, $E$ is also injective.
One can check that
\be
\lefteqn{F\phi_{\ti{\cK}_1,\ti{\cK}_2}(E^*XE)F^*=}\\
&=&
\left(\ba{c:c:c}
{
\begin{aligned}
&F_1^{}F_1^*A_1^{}X_{11}^{}A_1^*F_1^{}F_1^*\\
&{}+\nu\Tr(F_2^\rT A_2^{}X_{22}^\rT A_2^*\ov{F_2^{}})F_1^{}F_1^*
\end{aligned}
} & 0 & \e{1}F_1^{}F_1^*A_1X_{31} \\[4mm]\hdashline \\[-3mm]
0 &
\begin{aligned}
&\ov{F_2}F_1^\rT A_2^{}X_{22}^{\rT}A_2^*\ov{F_2}F_2^\rT \\
&{}+\nu^{-1}\Tr(F_2^\rT A_2^{}X_{22}^\rT A_2^*\ov{F_2})\ov{F_2^{}}F_2^\rT \\
\end{aligned}
& \e{2}\ov{F_2}F_2^\rT A_2X_{32}^\rT \\[4mm] \hdashline \\[-3mm]
\me{1}X_{31}A_1^*F_1^{}F_1^* &  \me{2} X_{23}^\rT A_2 \ov{F_2}F_2^\rT & X_{33}
\ea\right)
\ee
for $X\in B(\cK)$, where $\phi_{\ti{\cK}_1,\ti{\cK}_2}$ is given by \eqref{MOhd}. It follows from the definition of $F_1$ and $F_2$ that $F_1^{}F_1^*A_1^{}=A_1^{}$, and $\ov{F_2^{}}F_2^\rT A_2=\ov{F_2^{}F_2^*\ov{A_2^{}}}=\ov{\ov{A_2}}=A_2$. Consequently, $F_1F_1^*=P_1$ and
$\ov{F_2}F_2^\rT=\ov{F_2F_2^*}=\ov{P_{\ran\ov{A_2}}}=P_{\ran A_2}^{}=P_2^{}$.
Thus, we arrive at the equality $F\phi_{\ti{\cK}_1,\ti{\cK}_2}(E^*XE)F^*=\phi(X)$. Applying Lemma \ref{lift} and Theorem \ref{special}, we infer that $\phi$ is exposed.
\end{proof}
\subsection{Examples of optimal positive maps}
Let $\cK_1$, $\cK_2$ be finite dimensional Hilbert spaces and $\cK=\cK_1\oplus\cK_2\oplus\bC$.
In \cite{RSC} Sarbicki, Chru{\'s}ci{\'n}ski and one of the authors described another generalization of the map of Miller and Olkiewicz. We proposed further generalization to the map $\Omega_{\cK_1,\cK_2}$ given by \eqref{RSCgen2} (cf. Example \ref{e:RSCgen2}). It was shown in Corollary \ref{c:examp} that the map $\Lambda_{\cK_1,\cK_2}$ is positive. Here we describe its further properties.

Firstly, observe that
\beq
\label{Omegadec}
\Omega_{\cK_1,\cK_2}(X)=\phi_{\cK_1,\cK_2}(X)+
\left(\ba{ccc}
\Tr(X_{11})\jed_{B(\cK_1)}-X_{11} & 0 & 0 \\
0 & 0 & 0 \\
0 & 0 & 0
\ea\right)
\eeq
and
\beq
\label{Lambdadec}
\Lambda_{\cK_1,\cK_2}(X)=\Omega_{\cK_1,\cK_2}(X)+
\left(\ba{ccc}
0 & 0 & 0 \\
0 & \Tr(X_{22})\jed_{B(\cK_1)}-X_{22}^\rT & 0 \\
0 & 0 & 0
\ea\right)
\eeq
for $X=(X_{ij})\in B(\cK)$. Since $\psi_1(X)=\Tr(X)\jed -X$ and $\psi_2(X)=\Tr(X)\jed - X^\rT$ are positive maps (cf. \eqref{psig}), the maps $\Omega_{\cK_1,\cK_2}$ and $\Lambda_{\cK_1,\cK_2}$ are no longer extremal. However, we will show that $\Omega_{\cK_1,\cK_2}$ is still an example of an optimal map.
Let us 
Let us start with the following
\begin{prop}
\label{RSCgenstrongop}
The map $\Omega_{\cK_1,\cK_2}$ satisfies the spanning property.
\end{prop}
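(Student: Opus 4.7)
My plan is to invoke the equivalent characterization \eqref{spanpro}: it suffices to show that every completely positive map $\psi:B(\cK)\to B(\cH)$ satisfying $\la y,\psi(\xi\xi^*)y\ra=0$ whenever $\la y,\Omega_{\cK_1,\cK_2}(\xi\xi^*)y\ra=0$ is zero. Kraus-decomposing $\psi(X)=\sum_k A_kXA_k^*$, the hypothesis reads $\sum_k|\la A_k\xi,y\ra|^2=0$ and so decouples across $k$; therefore it is enough to show that any single $A\in B(\cK,\cH)$ with $\la A\xi,y\ra=0$ for every $(\xi,y)$ in the zero-set $\cZ_\Omega=\{(\xi,y):\Omega_{\cK_1,\cK_2}(\xi\xi^*)y=0\}$ must vanish.

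The first step is to enumerate $\cZ_\Omega$ by explicitly computing $\Omega_{\cK_1,\cK_2}(\xi\xi^*)$ for $\xi=\eta_1+\eta_2+\alpha\varepsilon$, giving the $3\times 3$ block operator
\[
\left(\begin{array}{ccc} (\|\eta_1\|^2+\|\eta_2\|^2)\jed_{\cK_1} & 0 & \bar\alpha\eta_1 \\ 0 & \ov{\eta_2}\,\ov{\eta_2}^*+\|\eta_1\|^2\jed_{\cK_2} & \alpha\ov{\eta_2} \\ \alpha\eta_1^* & \bar\alpha\ov{\eta_2}^* & |\alpha|^2 \end{array}\right),
\]
together with a case analysis on the vanishing of $\eta_1,\eta_2,\alpha$: this is analogous to \eqref{0n0}--\eqref{nnn}, but now the top-left block is full-rank rather than rank one, so its contribution to the kernel is severely restricted.

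With $A=(A^{ij})_{i,j=1,2,3}$ decomposed in blocks, I then eliminate blocks one at a time. Pairs with $\xi\in\cK_1$ (kernel $\bC e$), $\xi\in\cK_2$ (kernel $\{\ov{\eta_2}\}^\perp\oplus\bC e$), and $\xi=\varepsilon$ (kernel $\cH_1\oplus\cH_2$) immediately kill $A^{31},A^{32},A^{13},A^{23}$, force $A\varepsilon=\lambda e$ for a scalar $\lambda$, and restrict $A^{22}\eta_2\in\bC\ov{\eta_2}$ for every $\eta_2$. Pairs with $\xi=\eta_1+\alpha\varepsilon$ (one-dimensional kernel spanned by $-\bar\alpha\eta_1/\|\eta_1\|^2+e$) give $\la\eta_1,A^{11}\eta_1\ra=\ov\lambda\|\eta_1\|^2$ for every $\eta_1$, and polarization of this sesquilinear identity yields $A^{11}=\ov\lambda\jed_{\cK_1}$.

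The decisive step then comes from pairs with $\xi=\eta_2+\alpha\varepsilon$ and $y=y_2+\beta e$, where $\la\ov{\eta_2},y_2\ra=-\alpha\beta$: substitution reduces $\la A\xi,y\ra=0$ to the identity $A^{22}\eta_2=(\alpha/\bar\alpha)\lambda\ov{\eta_2}$. Since the left-hand side is independent of $\alpha$ while $(\alpha/\bar\alpha)$ genuinely rotates with the phase of $\alpha$, we conclude $\lambda=0$, whence $A^{11}=A^{22}=A^{33}=0$. Finally, with only $A^{12}$ and $A^{21}$ possibly nonzero, pairs $\xi=\eta_1+\eta_2+\alpha\varepsilon$ with all three summands nonzero reduce the condition to $\alpha\la\eta_1,A^{12}\eta_2\ra+\bar\alpha\la\ov{\eta_2},A^{21}\eta_1\ra=0$; separating the $\alpha,\bar\alpha$ terms by the usual phase argument gives $A^{12}=A^{21}=0$, so $A=0$ and the proof is complete. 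The main technical obstacle is bookkeeping with antilinear involutions in the decisive step, where the $\alpha/\bar\alpha$ phase rotation is precisely what rules out all scalar completely positive components compatible with the other block constraints.
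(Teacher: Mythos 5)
Your proposal is correct and follows essentially the same route as the paper: both invoke the criterion \eqref{spanpro}, Kraus-decompose the completely positive map so that the hypothesis decouples into $\la y,S\eta\ra=0$ for a single Kraus operator over the zero set of $\Omega_{\cK_1,\cK_2}$, enumerate that zero set from the explicit block form of $\Omega_{\cK_1,\cK_2}(\eta\eta^*)$, and then kill the blocks of $S$ one family of pairs at a time. The only differences are tactical (you extract $\lambda=0$ from the $\eta_2+\alpha\varepsilon$ pairs via the $\alpha/\ov{\alpha}$ phase rotation, whereas the paper scales $\eta_2\mapsto z\eta_2$ in the three-component pairs), and both variants are sound.
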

\begin{proof}
We will show that $\Omega_{\cK_1,\cK_2}$ satisfies condition \eqref{spanpro}.
Let $\cZ_{\Omega}$ denote the set of all pairs such that $\la y,\Omega_{\cK_1,\cK_2}(\eta\eta^*)y\ra=0$. As previously, we assume that $\eta=\eta_1+\eta_2+\alpha \eps$, $y=y_1+y_2+\beta\eps$, where $\eta_i,y_i\in\cK_i$, $i=1,2$, and $\alpha,\beta\in\bC$. It follows from \eqref{Omegadec} that $(\eta,y)\in\cZ_\Omega$ if and only if $(\eta,y)\in\cZ_\phi$ (cf. page \pageref{stronaZ}), and $y_1,\eta_1$ are linearly dependent. Taking into account the description of $\cZ_\phi$ given by \eqref{0n0}--\eqref{nnn} we infer that $(\eta,y)\in\cZ_\Omega$ if and only if at least one of the following conditions holds
\beg
\alpha=0, & \textrm{and} &  y_1\perp \eta_1,\; y_2=0 \label{0n}\\
\eta_1=0,\; {\eta_2}=0 & \textrm{and} & \beta=0 \label{n0}\\
\alpha\neq 0,\; \eta_1\neq 0,\; {\eta_2}\neq 0 & \textrm{and} &
y_1=-\dfrac{\ov{\alpha}\beta}{\Vert \eta_1\Vert^2+\Vert {\eta_2}\Vert^2} \eta_1, \;
y_2=-\dfrac{\alpha\beta}{\Vert \eta_1\Vert^2+\Vert {\eta_2}\Vert^2}\ov{\eta_2}
\label{nn}
\\
\eta_1=0,\; \eta_2\neq 0 & \textrm{and} &
y_1=0,\; y_2=-\frac{\alpha\beta}{\Vert\eta_2\Vert}\ov{\eta_2}+\rho_2\textrm{ for }\rho_2\in\cK_2\ominus\bC\ov{\eta_2}
\label{extra}
\eeg
Now, assume that a completely positive map $\psi:B(\cK)\to B(\cK)$ satisfies $\la y,\psi(\eta\eta^*)y\ra=0$ for every $(\eta,y)\in\cZ_\Omega$. Since $\cK$ is finite dimensional, $\psi$ can be written in the Kraus form, i.e. $\psi(X)=\sum_{j=1}^mS_jXS_j^*$ for some $S_j\in B(\cK)$. Observe that the condition $\la y,\psi(\eta\eta^*)y\ra=0$ implies $\la y,S_j\eta\ra=0$ for every $j$. If $\eta_1\in\cK_1$, $\eta_2\in\cK_2$, then $(\eta_1+\eta_2,\eps)\in \cZ_\Omega$ (cf. \eqref{0n}), hence $\la \eps,S_j(\eta_1+\eta_2)\ra=0$. Since $\eta_1,\eta_2$ are arbitrary, $S_j(\eta_1+\eta_2)\in\eps^\perp=\cK_1\oplus\cK_2$, i.e. $S_j(\cK_1\oplus\cK_2)\subset\cK_1\oplus\cK_2$. On the other hand, if $y_1\in\cK_1$, $y_2\in\cK_2$, then $(\eps,y_1+y_2)\in\cZ_\Omega$ according to \eqref{n0}. Thus $\la y_1+y_2, S\eps\ra=0$, so $S_j\eps\in (\cK_1\oplus\cK_2)^\perp=\bC\eps$, i.e. $S_j\eps=\mu\eps$ for some $\mu\in\bC$.
Further, assume that $\eta=\eta_1+\eta_2+\alpha\eps$, where $\Vert\eta_1\Vert^2+\Vert\eta_2\Vert^2>0$, $\alpha\neq 0$, and consider $y=\ov{\alpha}\eta_1+\alpha\ov{\eta_2}-(\Vert\eta_1\Vert^2+\Vert\eta_2\Vert^2)\eps$.
Then $(\eta,y)\in\cZ_\Omega$ due to \eqref{nn}. Therefore
$$
\alpha\la\eta_1,S_j\eta_1\ra+ \alpha\la\eta_1,S_j\eta_2\ra+ \ov{\alpha}\la\ov{\eta_2},S_j\eta_1\ra+ \ov{\alpha}\la\ov{\eta_2}S_j\eta_2\ra-\alpha\mu(\Vert\eta_1\Vert^2+\Vert\eta_2\Vert^2)=0.
$$
Since this equality holds for any $\alpha$, we got
\beq
\label{S1}
\la\eta_1,S_j\eta_1\ra+ \la\eta_1,S_j\eta_2\ra-\mu(\Vert\eta_1\Vert^2+\Vert\eta_2\Vert^2)=0,
\eeq
\beq
\label{S2}
\la\ov{\eta_2},S_j\eta_1\ra+ \la\ov{\eta_2},S_j\eta_2\ra=0.
\eeq
The equality \eqref{S1} is satisfied for any $\eta_1,\eta_2$. Assume $\eta_2\neq 0$ and replace $\eta_2$ by $z\eta_2$, $z\in\bC$. Then we arrive at
$$
\la\eta_1,S_j\eta_1\ra-\mu\Vert\eta_1\Vert+ z\la\eta_1,S_j\eta_2\ra-|z|^2\mu\Vert\eta_2\Vert^2=0.
$$
Since it holds for any $z$, $\mu=0$, $\la\eta_1,S_j\eta_1\ra=0$, and $\la\eta_1,S_j\eta_2\ra=0$. Thus, $S_j\eps=0$ and  $S_j(\cK_1\oplus\cK_2)\subset\cK_2$. Similarly, we derive from \eqref{S2} that, in particular, $\la\ov{\eta_2},S_j\eta_1\ra=0$
for any $\eta_1,\eta_2$. It implies $S_j\big|_{\cK_1}=0$.
Now, for $\eta_2\in\cK_2$, $\rho_2\in\cK_2\ominus\bC\ov{\eta_2}$ and $\alpha\in\bC$ consider vectors $\eta=\eta_2+\alpha\eps$, $y=\alpha\ov{\eta_2}+\rho_2-\Vert\eta_2\Vert^2\eps$. It follows from \eqref{extra} that $(\eta,y)\in \cZ_\Omega$. Thus
$
\la\ov{\alpha}\ov{\eta_2}+\rho_2,S_j\eta_2\ra=0.
$
Since it holds for any $\alpha$ and $\rho_2$, we arrive at $S_j\big|_{\cK_2}=0$.

Summing up, $S_j=0$ for every $j$, therefore $\psi=0$.
\end{proof}
\begin{prop}
The map $\Omega_{\cK_1,\cK_2}$ is a nondecomposable and optimal map.
\end{prop}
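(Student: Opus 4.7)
The plan is to handle optimality and nondecomposability separately. Optimality is essentially free: Proposition \ref{RSCgenstrongop} just established that $\Omega_{\cK_1,\cK_2}$ satisfies the spanning property \eqref{span}, and as recalled after \eqref{span}, this is a sufficient condition for optimality (\cite{Lew2000}). Thus the content of the proposition lies entirely in proving nondecomposability.

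For nondecomposability I would exhibit a PPT operator $Z$ with $\la\Omega_{\cK_1,\cK_2},Z\ra_\rd<0$, reusing the construction from the proof of Theorem \ref{2pos2copos}. Observe that the two constituent maps $\phi_1(X)=\Tr(X)\jed$ and $\phi_2(X)=X^\rT$ are completely positive and completely copositive respectively; moreover, identifying $\cH_i=\cK_i$ and fixing unit real vectors $\eps_i\in\cK_i$, we have $\phi_i(\eps_i\eps_i^*)\eps_i=\eps_i$, so in the terminology preceding Definition \ref{canon} we may take $\xi_i=x_i=\eps_i$ and $\lambda_1=\lambda_2=1$. The natural candidate for $Z$ is the $\lambda_i=1$ case of the witness from Theorem \ref{2pos2copos}, namely
\be
Z &=& \eps_1^{}\eps_1^*\otimes(\eps_1^{}\eps_1^*+ee^*) + \eps_2^{}\eps_2^*\otimes(\eps_2^{}\eps_2^*+ee^*) \\
&&{}-(\eps_1^{}\eps^*\otimes\eps_1^{}e^*+\eps\eps_1^*\otimes e\eps_1^*) - (\eps_2^{}\eps^*\otimes e\eps_2^*+\eps\eps_2^*\otimes\eps_2^{}e^*) \\
&&{}+\eps\eps^*\otimes(\eps_1^{}\eps_1^*+\eps_2^{}\eps_2^*+ee^*).
\ee

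Positivity of $Z$ and of $Z^\Gamma$ follows from explicit rank-one decompositions of the same type used in the proof of Theorem \ref{2pos2copos}: $Z$ splits as a sum of the three positive rank-ones $(\eps_1\otimes e)(\cdot)^*$, $(\eps_2\otimes\eps_2)(\cdot)^*$, $(\eps\otimes\eps_1)(\cdot)^*$ together with the two ``difference'' rank-ones $(\eps_1\otimes\eps_1-\eps\otimes e)(\cdot)^*$ and $(\eps_2\otimes e-\eps\otimes\eps_2)(\cdot)^*$, and an analogous five-term decomposition with the roles of the $\eps_i$ and $e$ factors swapped on the second tensor slot yields $Z^\Gamma\geq 0$. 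Hence $Z\in\mathbb{T}$. The key remaining step is to compute $\la\Omega_{\cK_1,\cK_2},Z\ra_\rd=\sum_j\Tr(\Omega_{\cK_1,\cK_2}(X_j)Y_j^\rT)$ term by term using the block form \eqref{RSCgen2}: a direct calculation shows each of the seven tensor summands contributes $\pm 1$, with three positive (the two ``diagonal'' terms $\eps_i^{}\eps_i^*\otimes(\cdots)$ and the final $\eps\eps^*\otimes(\cdots)$) and four negative (the four ``cross'' terms), giving $\la\Omega_{\cK_1,\cK_2},Z\ra_\rd=-1$.

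The main obstacle is not any single hard step but rather careful bookkeeping: one must correctly apply the full-operator transpose $Y^\rT$ appearing in the definition of $\la\cdot,\cdot\ra_\rd$ to rank-one summands such as $\eps_i^{}e^*$, and keep it straight from the block-transpose $X_{ij}^\rT$ that appears inside the merging formula \eqref{RSCgen2}. The reason the same $Z$ used in Theorem \ref{2pos2copos} still works here --- despite $\Omega_{\cK_1,\cK_2}$ not being the canonical merging (the ingredients $B_1=C_2=\jed$ are larger than the rank-one canonical choices) --- is that only the matrix entries $\la e_l,\Omega_{\cK_1,\cK_2}(\eps_j^{}\eps_k^*)e_m\ra$ appearing in the trace evaluation are relevant, and for the specific vectors $\eps_i$, $e$ chosen above these coincide numerically with those of the canonical merging.
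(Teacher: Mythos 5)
Your proposal is correct, and its overall architecture coincides with the paper's: optimality is deduced from the spanning property established in Proposition \ref{RSCgenstrongop}, and nondecomposability is proved by exhibiting a PPT operator $Z$ with $\Tr(\cC_{\Omega_{\cK_1,\cK_2}}^\rT Z)<0$. The only genuine difference is the choice of witness. The paper's operator \eqref{ZZZ} is a ``full-basis'' version: it sums the elementary building blocks over all basis vectors $\eps_{1,1},\ldots,\eps_{1,k_1}$ and $\eps_{2,1},\ldots,\eps_{2,k_2}$, includes the identities $\jed_{B(\cK_i)}$ and a weight $k=\max\{k_1,k_2\}$ on $\eps\eps^*\otimes\eps\eps^*$, and evaluates to $-\min\{k_1,k_2\}$. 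You instead recycle the single-vector witness from the proof of Theorem \ref{2pos2copos} with $\lambda_1=\lambda_2=1$, supported only on $\eps_1$, $\eps_2$, $\eps$, and obtain the value $-1$. I checked that your five-term rank-one decompositions of $Z$ and $Z^\Gamma$ do reproduce your displayed $Z$, so $Z\in\mathbb{T}$, and that the seven contributions are $+1,+1,-1,-1,-1,-1,+1$ as you claim; your observation that the transfer from Theorem \ref{2pos2copos} is legitimate because the relevant matrix elements $\la\cdot,\Omega_{\cK_1,\cK_2}(\eps_j^{}\eps_k^*)\cdot\ra$ agree with those of the canonical merging (even though $B_1=C_2=\jed$ are not the rank-one canonical ingredients) is also sound. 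Your witness is leaner and makes the link to Theorem \ref{2pos2copos} explicit; the paper's symmetrized witness produces a violation that grows with $\min\{k_1,k_2\}$, which is mildly stronger quantitative information but not needed for the statement. Either argument is complete.
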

\begin{proof}
To prove that $\Omega_{\cK_1,\cK_2}$ is a nondecomposable map we show that it is an  entanglement witness for some PPT operator $Z$. Let $\cE_\cK=(\eps_{1,1},\ldots,\eps_{1,k_1},\eps_{2,1},\ldots,\eps_{2,k_2},\eps)$ be an orthonormal basis of $\cK$ composed of real vectors such that $(\eps_{1,1},\ldots,\eps_{1,k_1})$ and $(\eps_{2,1},\ldots,\eps_{2,k_2})$ are bases of $\cK_1$ and $\cK_2$ respectively, and $\cK_3=\bC \eps$.
Define
\begin{eqnarray}
Z & = & \sum_{i=1}^{k_1} \eps_{1,i}^{}\eps_{1,i}^*\otimes (\eps_{1,i}^{}\eps_{1,i}^*+\eps\eps^*)
+ \sum_{i=1}^{k_2} \eps_{2,i}^{}\eps_{2,i}^*\otimes (\eps_{2,i}^{}\eps_{2,i}^*+\eps\eps^*)
\label{ZZZ}
\\ && {}
+ \eps\eps^*\otimes (\jed_{B(\cK_1)}+\jed_{B(\cK_2)}+k\eps\eps^*)
\nonumber
\\
&&
{} - \sum_{i=1}^{k_1}(\eps_{1,i}^{}\eps^*\otimes \eps_{1,i}^{}\eps^*+\eps\eps_{1,i}^*\otimes\eps\eps_{1,i}^*)
- \sum_{i=1}^{k_2}(\eps_{2,i}^{}\eps^*\otimes \eps\eps_{2,i}^*+\eps\eps_{2,i}^*\otimes\eps_{2,i}\eps^*)
\nonumber
\end{eqnarray}
where $k_i=\dim\cK_i$, $i=1,2$, and $k=\max\{ k_1,k_2\}$.
It is a positive operator on $\cK\otimes\cK$, because it can be decomposed in the form
\be
Z&=&
\sum_{i=1}^{k_1}(\eps_{1,i}\otimes\eps_{1,i}-\eps\otimes\eps)(\eps_{1,i}\otimes\eps_{1,i}-\eps\otimes\eps)^*
+\sum_{i=1}^{k_2}(\eps_{2,i}\otimes\eps-\eps\otimes\eps_{2,i})(\eps_{2,i}\otimes\eps-\eps\otimes\eps_{2,i})^*
\\
&&{}+
\jed_{B(\cK_1)}\otimes \eps\eps^*+\sum_{i=1}^{k_2}\eps_{2,i}^{}\eps_{2,i}^*\otimes \eps_{2,i}^{}\eps_{2,i}^* + (k-k_1)\eps\eps^*\otimes\eps\eps^*.
\ee
Similarly, one can check that
\be
Z^\Gamma &=&
\sum_{i=1}^{k_1}(\eps_{1,i}\otimes\eps-\eps\otimes\eps_{1,i})(\eps_{1,i}\otimes\eps-\eps\otimes\eps_{1,i})^*
+\sum_{i=1}^{k_2}(\eps_{2,i}\otimes\eps_{2,i}-\eps\otimes\eps)(\eps_{2,i}\otimes\eps_{2,i}-\eps\otimes\eps)^*
\\
&&{}+
\jed_{B(\cK_2)}\otimes \eps\eps^*+\sum_{i=1}^{k_1}\eps_{1,i}^{}\eps_{1,i}^*\otimes \eps_{1,i}^{}\eps_{1,i}^* + (k-k_2)\eps\eps^*\otimes\eps\eps^*,
\ee
so $Z$ is a PPT operator. Finally,
\be
\Tr(\cC_{\Omega_{\cK_1,\cK_2}}^t Z) &=&
\sum_{i=1}^{k_1}(\la\eps_{1,i},\Omega_{\cK_1,\cK_2}(\eps_{1,i}^{}\eps_{1,i}^*)\eps_{1,i}\ra+ \la\eps,\Omega_{\cK_1,\cK_2}(\eps_{1,i}^{}\eps_{1,i}^*)\eps\ra)
\\ && {}
+
\sum_{i=1}^{k_2}(\la\eps_{2,i},\Omega_{\cK_1,\cK_2}(\eps_{2,i}^{}\eps_{2,i}^*)\eps_{2,i}\ra+ \la\eps,\Omega_{\cK_1,\cK_2}(\eps_{2,i}^{}\eps_{2,i}^*)\eps\ra)
\\ && {}
-
\sum_{i=1}^{k_1}(\la\eps_{1,i},\Omega_{\cK_1,\cK_2}(\eps_{1,i}^{}\eps^*)\eps\ra +
\la\eps,\Omega_{\cK_1,\cK_2}(\eps^{}\eps_{1,i}^*)\eps_{1,i}^{}\ra)
\\&& {}
-
\sum_{i=1}^{k_2}(\la\eps,\Omega_{\cK_1,\cK_2}(\eps_{2,i}^{}\eps^*)\eps_{2,i}\ra +
\la\eps_{2,i}^{},\Omega_{\cK_1,\cK_2}(\eps^{}\eps_{2,i}^*)\eps^{}\ra)
\\&& {}
+k\la\eps,\Omega_{\cK_1,\cK_2}(\eps\eps^*)\eps\ra
\\ &=& k_1+k_2-2k_1-2k_2+k=-\min\{k_1,k_2\} <0.
\ee
Therefore, $\Omega_{\cK_1,\cK_2}$ is an entanglement witness for the PPT operator $Z$, hence it is nondecomposable.

Optimality of $\Omega_{\cK_1,\cK_2}$ follows directly from Proposition \ref{RSCgenstrongop}. 
\end{proof}
\begin{remark}
As regards the map $\Lambda_{\cK_1,\cK_2}$, it is no longer optimal. It follows from \eqref{Lambdadec} and the fact that the map $\psi(X)=\Tr(X)\jed-X^\rT$ is completely positive. However, by considering the PPT operator $Z$ given by \eqref{ZZZ}, one can show that $\Lambda_{\cK_1,\cK_2}$ is still nondecomposable.
\end{remark}

\section{A family of positive maps from $\bM_3(\bC)$ into $\bM_3(\bC)$}
In this section we will discuss the case when all spaces $\cK_i$ and $\cH_i$ are one-dimensional. In this case $\cK=\bigoplus_{i=1}^3\cK_i=\bC^3$ and $\cH=\bigoplus_{i=1}^3\cH_i=\bC^3$, thus $B(\cK)=B(\cH)=\bM_3(\bC)$. The general form of \eqref{merging} in this case is
\beq
\label{merging3}
\phi\left(\ba{ccc}
x_{11} & x_{12} & x_{13} \\ x_{21} & x_{22} & x_{23} \\ x_{31} & x_{32} & x_{33}
\ea\right)
=\left(\ba{ccc}
f_1x_{11}+w_2x_{22} & 0 & b_1x_{13}+c_1x_{31} \\
0 & f_2x_{22}+w_1x_{11} & b_2x_{23}+c_2x_{32} \\
\ov{b_1}x_{31}+\ov{c_1}x_{13} & \ov{b_2}x_{32}+\ov{c_2}x_{23} & x_{33}
\ea\right).
\eeq
\subsection{Positivity}
It follows from Proposition \ref{mergprop} that positivity of $\phi$ implies that $f_i$ and $w_i$ are nonnegative constants for $i=1,2$. Moreover, inequality \eqref{epsineq} yields here $|b_i|+|c_i|\leq f_i^{1/2}$, $i=1,2$. Thus, let us introduce the following parameters
$$
\mu_i^{}=f_i^{1/2}, \quad \sigma_i^{}=w_i^{1/2},\quad \varepsilon_i=|b_i|+|c_i|,\quad
\delta_i=(\mu_i^2-\varepsilon_i^2)^{1/2}.
$$
Then, necessary condition for positivity of a map \eqref{merging3} is that it should be of the form
\beq
\label{merging3pos}
\phi\left(\ba{ccc}
x_{11} & x_{12} & x_{13} \\ x_{21} & x_{22} & x_{23} \\ x_{31} & x_{32} & x_{33}
\ea\right)
=\left(\ba{ccc}
(\varepsilon_1^2+\delta_1^2)x_{11}+\sigma_2^2x_{22} & 0 & b_1x_{13}+c_1x_{31} \\
0 & (\varepsilon_2^2+\delta_2^2)x_{22}+\sigma_1^2x_{11} & b_2x_{23}+c_2x_{32} \\
\ov{b_1}x_{31}+\ov{c_1}x_{13} & \ov{b_2}x_{32}+\ov{c_2}x_{23} & x_{33}
\ea\right).
\eeq
As an immediate consquence of Theorem \ref{t:pos} we get the following criterion for positivity.
\begin{prop}
\label{3pos}
The map $\phi:\bM_3(\bC)\to\bM_3(\bC)$ given by \eqref{merging3pos} is positive if and only if 
\beq
\label{ineqpos3}
\sigma_1\sigma_2+\delta_1\delta_2\geq \varepsilon_1\varepsilon_2.
\eeq
\end{prop}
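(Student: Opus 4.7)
The strategy is to derive Proposition \ref{3pos} as an immediate specialization of Theorem \ref{t:pos} to the case $\cK_1=\cK_2=\cH_1=\cH_2=\bC$. First I would observe that \eqref{merging3pos} is exactly a merging of the positive maps $\phi_i:\bC\to\bC$, $\phi_i(x)=\mu_i^2 x$ (where $\mu_i^2=\varepsilon_i^2+\delta_i^2$), by means of operators $B_i,C_i:\bC\to\bC$ acting as multiplication by $b_i$, $c_i$, and functionals $\omega_i:\bC\to\bC$ acting as multiplication by $w_i=\sigma_i^2$. The projection $P_i$ onto $\ran\phi_i(\jed)$ is the identity on $\bC$ when $\mu_i>0$, and zero otherwise. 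Conditions (i) and (ii) of Theorem \ref{t:pos} are already built into the parametrization of \eqref{merging3pos}: positivity of $\omega_i$ amounts to $\sigma_i^2\geq 0$, and condition (ii) reduces to $|b_i|+|c_i|\leq\mu_i$, i.e.\ $\varepsilon_i\leq\mu_i$, which is implicitly assumed since $\delta_i=\sqrt{\mu_i^2-\varepsilon_i^2}$ is taken as a real number.

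Next I would compute the parametric quantities of Theorem \ref{t:pos} directly. For $\eta_i,y_i\in\bC$, one has $\phi_i(\eta_i\eta_i^*)=\mu_i^2|\eta_i|^2$ and hence
\begin{align*}
\mu_i(\eta_i,y_i) &=\mu_i|\eta_i||y_i|, & \varepsilon_i(\eta_i,y_i) &=\varepsilon_i|\eta_i||y_i|, & \delta_i(\eta_i,y_i) &=\delta_i|\eta_i||y_i|, \\
\sigma_1(\eta_1,y_2) &=\sigma_1|\eta_1||y_2|, & \sigma_2(\eta_2,y_1) &=\sigma_2|\eta_2||y_1|. & &
\end{align*}
Consequently every term in the inequality \eqref{e:posineq} of Theorem \ref{t:pos} carries the common factor $|\eta_1||\eta_2||y_1||y_2|$, so \eqref{e:posineq} is either vacuous (when some factor vanishes) or, after dividing through, equivalent to $\sigma_1\sigma_2+\delta_1\delta_2\geq\varepsilon_1\varepsilon_2$, independently of the choice of $\eta_i,y_i$. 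Thus (iii) holds universally if and only if \eqref{ineqpos3} holds, and together with the automatic (i)--(ii) this yields both implications of the proposition.

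There is essentially no obstacle: the proof is a pure specialization exercise, because one-dimensionality collapses every inner product to a product of moduli, eliminating any genuine vectorial dependence. The only mildly subtle bookkeeping point is to confirm that every positive $\phi$ of the form \eqref{merging3} must actually be of the reduced form \eqref{merging3pos} (so that the parameters $\mu_i,\varepsilon_i,\delta_i,\sigma_i$ are well defined as nonnegative reals). This follows by reading off the necessary conditions in Proposition \ref{mergprop} specialized to $\cK_i=\cH_i=\bC$, which is how the proposition statement already introduces the parametrization.
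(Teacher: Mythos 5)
Your proof is correct and follows exactly the paper's route: the paper derives Proposition \ref{3pos} as an ``immediate consequence'' of Theorem \ref{t:pos}, and your write-up simply makes explicit the specialization to one-dimensional blocks, where every parametric quantity factors as a constant times $|\eta_1||\eta_2||y_1||y_2|$ and condition (iii) collapses to \eqref{ineqpos3}. Nothing is missing; you even flag the minor $P_i$ bookkeeping that the paper itself glosses over.
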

For future considerations let us define the following further characteristics of a map $\phi$ given by \eqref{merging3pos}. Let $\vec{b}=(|b_1|,|b_2|)^\rT$ and $\vec{c}=(|c_1|,|c_2|)^\rT$. They are vectors lying in the first quadrant of $\bR^2$. Moreover, let $s_i=\max\left\{|b_i|,|c_i|\right\}$ for $i=1,2$,
and
$s=\max\{\Vert\vec{b}\Vert,\Vert\vec{c}\Vert\}$. Finally, let $\delta=(\delta_1^2+\delta_2^2)^{1/2}$ and $\veps=(\veps_1^2+\veps_2^2)^{1/2}$.
\subsection{Complete (co)positivity}
Let us observe that the Choi matrix $\cC_\phi\in\bM_3(\bC)\otimes\bM_3(\bC)=\bM_9(\bC)$ of the map $\phi$ given by \eqref{merging3pos} is of the form
\beq
\label{Choiphi}
\cC_\phi=
\left(\ba{ccc|ccc|ccc}
\veps_1^2+\delta_1^2 & \cdot & \cdot & \cdot & \cdot & \cdot & \cdot & \cdot & b_1 \\
\cdot & \sigma_1^2 & \cdot & \cdot & \cdot & \cdot & \cdot & \cdot & \cdot \\
\cdot & \cdot & \cdot & \cdot & \cdot & \cdot & \ov{c_1} & \cdot & \cdot \\ \hline
\cdot & \cdot & \cdot & \sigma_2^2 & \cdot & \cdot & \cdot & \cdot & \cdot \\
\cdot & \cdot & \cdot & \cdot & \veps_2^2+\delta_2^2 & \cdot & \cdot & \cdot & b_2 \\
\cdot & \cdot & \cdot & \cdot & \cdot & \cdot & \cdot & \ov{c_2} & \cdot \\ \hline
\cdot & \cdot & c_1 & \cdot & \cdot & \cdot & \cdot & \cdot & \cdot \\
\cdot & \cdot & \cdot & \cdot & \cdot & c_2 & \cdot & \cdot & \cdot \\
\ov{b_1} & \cdot & \cdot & \cdot & \ov{b_2} & \cdot & \cdot & \cdot & 1
\ea\right),
\eeq
where zeros are replaced by dots. Recall that complete positivity of $\phi$ is equivalent to positive definiteness of $\cC_\phi$. Analogously complete copositivity is equivalent to positive definiteness of $\cC_\phi^\Gamma$. Thus we have
\begin{thm}
\label{t:3cp}
Let $\phi$ be a map given by \eqref{merging3pos}. The following conditions are equivalent:
\begin{enumerate}
\item[(i)]
$\phi$ is completely positive (respectively completely copositive);
\item[(ii)]
$\phi$ is $2$-positive (respectively $2$-copositive);
\item[(iii)]
$c_1=c_2=0$ (respectively $b_1=b_2=0$) and
\beq
\label{cp2}
\delta_1\delta_2\geq \veps_1\veps_2.
\eeq
\end{enumerate}
\end{thm}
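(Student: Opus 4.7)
The plan is to pass everything through the Choi matrix $\cC_\phi$ displayed in \eqref{Choiphi} and establish the chain (i)$\Rightarrow$(ii)$\Rightarrow$(iii)$\Rightarrow$(i); the copositive half of the theorem then follows by the analogous analysis of $\cC_\phi^\Gamma$.

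The implication (i)$\Rightarrow$(ii) is immediate from the definitions. For (ii)$\Rightarrow$(iii) I would invoke Theorem \ref{2pos}. In the present one-dimensional setting $C_i\colon\bC\to\bC$ reduces to multiplication by the scalar $c_i$, so the conclusion $C_i=0$ of that theorem translates to $c_1=c_2=0$. Moreover, the functions of Theorem \ref{t:pos} take the simple form $\veps_i(\eta_i,y_i)=\veps_i|\eta_i||y_i|$ and $\delta_i(\eta_i,y_i)=\delta_i|\eta_i||y_i|$, so the inequality \eqref{zui} is homogeneous of degree two in each of the four scalars $|\eta_i|,|y_i|$ and collapses at once to \eqref{cp2}.

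The substantive step is (iii)$\Rightarrow$(i). Setting $c_1=c_2=0$ in \eqref{Choiphi} kills rows and columns $3,6,7,8$ outright, while rows and columns $2$ and $4$ carry only the nonnegative diagonal entries $\sigma_1^2,\sigma_2^2$. Up to these trivially positive contributions, $\cC_\phi$ is unitarily equivalent (by a permutation) to a block-diagonal matrix whose only nontrivial block is the $3\times 3$ Hermitian principal submatrix
$$
M=\left(\ba{ccc}
\veps_1^2+\delta_1^2 & 0 & b_1 \\
0 & \veps_2^2+\delta_2^2 & b_2 \\
\ov{b_1} & \ov{b_2} & 1
\ea\right),
$$
in which $|b_i|=\veps_i$ because $c_i=0$. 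I would verify positive semidefiniteness of $M$ by running through its principal minors: the three $1\times 1$ minors are nonnegative; the $2\times 2$ minor on the first two rows equals $(\veps_1^2+\delta_1^2)(\veps_2^2+\delta_2^2)\geq 0$; the two $2\times 2$ minors containing the last row evaluate to $\delta_1^2$ and $\delta_2^2$; and a one-line expansion along the first row yields $\det M=\delta_1^2\delta_2^2-\veps_1^2\veps_2^2$, which is nonnegative precisely by hypothesis \eqref{cp2}. Hence $\cC_\phi\ge 0$ and $\phi$ is completely positive.

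For the copositive version the same reasoning is applied to $\cC_\phi^\Gamma$; partial transposition on the second tensor factor interchanges the $b_i$ and $\ov{c_i}$ entries within the displayed sparsity pattern, so the parallel argument yields the conditions $b_1=b_2=0$ together with the same inequality \eqref{cp2}. The only place where a genuine computation is needed is the evaluation of $\det M$ above; everything else is bookkeeping, so I do not foresee a real obstacle.
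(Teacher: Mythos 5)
Your proof is correct and follows essentially the same route as the paper: the paper likewise obtains $c_1=c_2=0$ from Theorem \ref{2pos} and proves (iii)$\Rightarrow$(i) by checking positive definiteness of the Choi matrix \eqref{ciszero}, which reduces to exactly your $3\times 3$ determinant $\delta_1^2\delta_2^2-\veps_1^2\veps_2^2\geq 0$. The only (minor) divergence is in deriving \eqref{cp2}: the paper recomputes it by applying $\id_{\bM_2(\bC)}\otimes\phi$ to an explicit positive test matrix $H$, whereas you specialize the already-established inequality \eqref{zui} of Theorem \ref{2pos} to the scalar case, which is a legitimate and slightly more economical shortcut.
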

\begin{proof}
(ii) $\Rightarrow$ (iii)
Proposition \ref{2pos} asserts that $c_1=c_2=0$ is a necessary condition for $2$-positivity. Then, 
\eqref{merging3pos} reduces to 
\beq
\label{3cp}
\phi\left(\ba{ccc}
x_{11} & x_{12} & x_{13} \\ x_{21} & x_{22} & x_{23} \\ x_{31} & x_{32} & x_{33}
\ea\right)
=\left(\ba{ccc}
(|b_1|^2+\delta_1^2)x_{11}+\sigma_2^2x_{22} & 0 & b_1x_{13} \\
0 & (|b_2|^2+\delta_2^2)x_{22}+\sigma_1^2x_{11} & b_2x_{23} \\
\ov{b_1}x_{31} & \ov{b_2}x_{32} & x_{33}
\ea\right).
\eeq
Let $H$ be a positive element in $\bM_2(\bC)\otimes\bM_3(\bC)$ given by
$$
H=\left(\ba{ccc|ccc}
1 & \cdot & \cdot & \cdot & 1 & 1 \\
\cdot & \cdot & \cdot & \cdot & \cdot & \cdot \\
\cdot & \cdot & \cdot & \cdot & \cdot & \cdot \\ \hline
\cdot & \cdot & \cdot & \cdot & \cdot & \cdot \\
1 & \cdot & \cdot & \cdot & 1 & 1 \\
1 & \cdot & \cdot & \cdot & 1 & 1 \\
\ea\right).
$$
Then
$$
\id_{\bM_2(\bC)}\otimes\phi (H)=
\left(\ba{ccc|ccc}
|b_1|^2+\delta_1^2 & \cdot & \cdot & \cdot & \cdot & b_1 \\
\cdot & \sigma_1^2 & \cdot & \cdot & \cdot & \cdot \\
\cdot & \cdot & \cdot & \cdot & \cdot & \cdot \\ \hline
\cdot & \cdot & \cdot & \sigma_2^2 & \cdot & \cdot \\
\cdot & \cdot & \cdot & \cdot & |b_2|^2+\delta_2^2 & b_2 \\
\ov{b_1} & \cdot & \cdot & \cdot & \ov{b_2} & 1 \\
\ea\right).
$$
Since $\phi$ is 2-positive, the above matrix is positive definite. Therefore, 
\beq
\label{wyzn}
\left|
\ba{ccc}
|b_1^{}|^2+\delta_1^2 & 0 & b_1 \\
0 & |b_2^{}|^2+\delta_2^2 & b_2 \\
\ov{b_1} & \ov{b_2} & 1
\ea
\right|\geq 0
\eeq
and  \eqref{cp2} follows.

(iii) $\Rightarrow$ (i)
The Choi matrix of $\phi$ is of the form 
\beq
\label{ciszero}
\cC_\phi=
\left(\ba{ccc|ccc|ccc}
|b_1|^2+\delta_1^2 & \cdot & \cdot & \cdot & \cdot & \cdot & \cdot & \cdot & b_1 \\
\cdot & \sigma_1^2 & \cdot & \cdot & \cdot & \cdot & \cdot & \cdot & \cdot \\
\cdot & \cdot & \cdot & \cdot & \cdot & \cdot & \cdot & \cdot & \cdot \\ \hline
\cdot & \cdot & \cdot & \sigma_2^2 & \cdot & \cdot & \cdot & \cdot & \cdot \\
\cdot & \cdot & \cdot & \cdot & |b_2|^2+\delta_2^2 & \cdot & \cdot & \cdot & b_2 \\
\cdot & \cdot & \cdot & \cdot & \cdot & \cdot & \cdot & \cdot & \cdot \\ \hline
\cdot & \cdot & \cdot & \cdot & \cdot & \cdot & \cdot & \cdot & \cdot \\
\cdot & \cdot & \cdot & \cdot & \cdot & \cdot & \cdot & \cdot & \cdot \\
\ov{b_1} & \cdot & \cdot & \cdot & \ov{b_2} & \cdot & \cdot & \cdot & 1
\ea\right)
\eeq
Since \eqref{cp2} is satisfied, the inequality \eqref{wyzn} holds, and consequently $\cC_\phi$ is positive definite.
Hence, complete positivity of $\phi$ follows.
\end{proof}

\subsection{Decomposablity vs. nondecomposability}
For a map $\phi$ given by \eqref{merging3pos} let
$\vec{b}=(|b_1|,|b_2|)^\rT$ and $\vec{c}=(|c_1|,|c_2|)^\rT$. They are vectors lying in the first quadrant of $\bR^2$. Moreover, let $s_i=\max\left\{|b_i|,|c_i|\right\}$ for $i=1,2$,
and
$s=\max\{\Vert\vec{b}\Vert,\Vert\vec{c}\Vert\}$. Finally, let $\delta=(\delta_1^2+\delta_2^2)^{1/2}$ and $\veps=(\veps_1^2+\veps_2^2)^{1/2}$.
In this subsection we always assume that $\phi$ is positive, hence it satisfies \eqref{ineqpos3}.

\begin{prop}
\label{dec_dep}
If vectors $\vec{b}$, $\vec{c}$ are linearly dependent, then $\phi$ is decomposable.
\end{prop}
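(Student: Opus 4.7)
The plan is to construct a decomposition of the Choi matrix $\cC_\phi$ from \eqref{Choiphi} as $\cC_\phi=P+N$ with $P\succeq 0$ and $N^\Gamma\succeq 0$, since this is equivalent to writing $\phi=\phi_1+\phi_2$ with $\phi_1$ completely positive and $\phi_2$ completely copositive. After disposing of the degenerate subcases $\vec b=0$ or $\vec c=0$ (which admit an analogous but simpler construction), assume $\vec c=t\vec b$ for some $t>0$, so that $|c_i|=t|b_i|$ and $\varepsilon_i=(1+t)|b_i|$.

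My ansatz places all $b$-entries of $\cC_\phi$ into $P$ and all $c$-entries into $N$, and introduces two auxiliary cross-terms at Choi positions where $\cC_\phi$ itself vanishes: $P_{1,5}=\gamma_b$ (so $N_{1,5}=-\gamma_b$) and $P_{2,4}=-\gamma_c$ (so $N_{2,4}=\gamma_c$). The point of the latter is that under partial transposition $N_{2,4}$ lands at $N^\Gamma_{1,5}$, completing the $\{1,5,9\}$-submatrix of $N^\Gamma$---which already carries the $c_i$'s at positions $(1,9),(5,9)$---into a genuine $3\times 3$ matrix one can try to make positive semidefinite. The crucial consequence of linear dependence is that the ratio $v:=|c_i|/\varepsilon_i=t/(1+t)$ is independent of $i$; setting $N_{ii}=|c_i|^2/v$, $P_{ii}=T_i-|c_i|^2/v$ for $i\in\{1,5\}$ together with $N_{99}=v$, $P_{99}=1-v$ then simultaneously satisfies, for both rows, the sharp identities $N_{11}N_{99}=|c_1|^2$ and $N_{55}N_{99}=|c_2|^2$. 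This is exactly what would fail in the absence of linear dependence.

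With this ansatz (or a slight perturbation introducing parameters $\epsilon_i\in[0,\delta_i^2]$ bumping $N_{ii}$ upward when necessary), the discriminant analysis of the $3\times 3$ determinants of $P|_{\{1,5,9\}}$ and $N^\Gamma|_{\{1,5,9\}}$---each a quadratic opening downward in $|\gamma_b|$ and $|\gamma_c|$ respectively---confines $|\gamma_c|$ and $|\gamma_b|$ to explicit intervals centered at $(1+t)|b_1||b_2|/v$ and $(1+t)|b_1||b_2|$, and the minimal achievable value of $|\gamma_b|+|\gamma_c|$ works out to $\varepsilon_1\varepsilon_2-\delta_1\delta_2$ (the key identity is one Cauchy--Schwarz application $\sqrt{\epsilon_1\epsilon_2}+\sqrt{(\delta_1^2-\epsilon_1)(\delta_2^2-\epsilon_2)}\leq\delta_1\delta_2$). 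The remaining $\{2,4\}$-block constraints $P_{22}P_{44}\geq|\gamma_c|^2$ and $(\sigma_1^2-P_{22})(\sigma_2^2-P_{44})\geq|\gamma_b|^2$ are, by another Cauchy--Schwarz $\sqrt{P_{22}P_{44}}+\sqrt{(\sigma_1^2-P_{22})(\sigma_2^2-P_{44})}\leq\sigma_1\sigma_2$ (with equality attained along $P_{ii}=a\sigma_i^2$), simultaneously feasible iff $|\gamma_c|+|\gamma_b|\leq\sigma_1\sigma_2$; and this collapses precisely to the positivity hypothesis $\sigma_1\sigma_2+\delta_1\delta_2\geq\varepsilon_1\varepsilon_2$ from \eqref{ineqpos3}.

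The principal obstacle is pinning down the correct ansatz: without linear dependence the $i$-dependent optimal value of $v$ differs between $i=1$ and $i=2$, so no single splitting parameter can synchronize the two rows of the $N^\Gamma$-submatrix, and the whole chain of reductions collapses. Linear dependence $|c_i|/\varepsilon_i=t/(1+t)$ removes this obstruction, after which two Cauchy--Schwarz applications and one discriminant identity reduce decomposability to the already-assumed positivity inequality on the nose.
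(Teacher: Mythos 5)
Your proposal is correct and follows essentially the same route as the paper's proof: both split the Choi matrix into a completely positive part carrying the $b$-entries and a completely copositive part carrying the $c$-entries, insert compensating cross-terms at the $(1,5)$ and $(2,4)$ positions, use the constant ratio $|c_i|/\varepsilon_i$ supplied by linear dependence to synchronize the diagonal splitting, and reduce the resulting determinant conditions exactly to the positivity inequality \eqref{ineqpos3} (your optimal value $|\gamma_b|+|\gamma_c|=\varepsilon_1\varepsilon_2-\delta_1\delta_2$ matches the paper's explicit choice $(1-\lambda)(1+\kappa)|b_1||b_2|$ plus its $c$-counterpart). The only difference is presentational: you derive the decomposition via a feasibility/optimization argument, whereas the paper simply exhibits the four explicit summands and checks their minors.
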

\begin{proof}
Firstly, assume that $\vec{b}=0$ or $\vec{c}=0$.
Taking into account Proposition \ref{cp2} it is enough to consider the case $\delta_1\delta_2<\veps_1\veps_2$. Assume that $\vec{c}=0$. Then necessarily $\veps_i=|b_i|>0$ for $i=1,2$ and $\cC_\phi$ is of the form \eqref{ciszero}. Let $k=\dfrac{\delta_1\delta_2}{|b_1||b_2|}$. Consider decomposition $\cC_\phi=\cC_1^{}+\cC_2^\Gamma$, where
\be
\cC_1&=&
\left(\ba{ccc|ccc|ccc}
|b_1|^2+\delta_1^2 & \cdot & \cdot & \cdot & (1-k)b_1\ov{b_2} & \cdot & \cdot & \cdot & b_1 \\
\cdot & \cdot & \cdot & \cdot & \cdot & \cdot & \cdot & \cdot & \cdot \\
\cdot & \cdot & \cdot & \cdot & \cdot & \cdot & \cdot & \cdot & \cdot \\ \hline
\cdot & \cdot & \cdot & \cdot & \cdot & \cdot & \cdot & \cdot & \cdot \\
(1-k)\ov{b_1}b_2 & \cdot & \cdot & \cdot & |b_2|^2+\delta_2^2 & \cdot & \cdot & \cdot & b_2 \\
\cdot & \cdot & \cdot & \cdot & \cdot & \cdot & \cdot & \cdot & \cdot \\ \hline
\cdot & \cdot & \cdot & \cdot & \cdot & \cdot & \cdot & \cdot & \cdot \\
\cdot & \cdot & \cdot & \cdot & \cdot & \cdot & \cdot & \cdot & \cdot \\
\ov{b_1} & \cdot & \cdot & \cdot & \ov{b_2} & \cdot & \cdot & \cdot & 1
\ea\right) \\[2mm]
\cC_2&=&
\left(\ba{ccc|ccc|ccc}
\cdot & \cdot & \cdot & \cdot & \cdot & \cdot & \cdot & \cdot & \cdot \\
\cdot & \sigma_1^2 & \cdot & -(1-k)b_1\ov{b_2} & \cdot & \cdot & \cdot & \cdot & \cdot \\
\cdot & \cdot & \cdot & \cdot & \cdot & \cdot & \cdot & \cdot & \cdot \\ \hline
\cdot & -(1-k)\ov{b_1}b_2 & \cdot & \sigma_2^2 & \cdot & \cdot & \cdot & \cdot & \cdot \\
\cdot & \cdot & \cdot & \cdot & \cdot & \cdot & \cdot & \cdot & \cdot \\
\cdot & \cdot & \cdot & \cdot & \cdot & \cdot & \cdot & \cdot & \cdot \\ \hline
\cdot & \cdot & \cdot & \cdot & \cdot & \cdot & \cdot & \cdot & \cdot \\
\cdot & \cdot & \cdot & \cdot & \cdot & \cdot & \cdot & \cdot & \cdot \\
\cdot & \cdot & \cdot & \cdot & \cdot & \cdot & \cdot & \cdot & \cdot
\ea\right) .
\ee
By direct calculation of minors one can check that $\cC_1$ is positive definite. According to \eqref{ineqpos3},
$$\sigma_1^2\sigma_2^2\geq (|b_1||b_2|-\delta_1\delta_2)^2=(1-k)^2|b_1|^2|b_2|^2.$$
Hence, the matrix $\cC_2$ is also positive definite. Consequently, $\cC_1$ is the Choi matrix of a completely positive map while $\cC_2^\Gamma$ is the Choi matrix of a completely copositive one. Therefore $\phi$ is decomposable.
The case $\vec{b}=0$ can be considered analogously.

Now, assume that $\vec{b}\neq 0$ and $\vec{c}\neq 0$.
Define $\lambda=\dfrac{\delta_1\delta_2}{\veps_1\veps_2}$. Since $\vec{b}$, $\vec{c}$ are dependent, $\vec{c}=\kappa\vec{b}$ for some $\kappa>0$. 
Consider $\lambda\leq 1$. Let
$$
\cC_{1,1}=
\left(\ba{ccc|ccc|ccc}
(1+\kappa)|b_1|^2+\dfrac{1}{1+\kappa}\delta_1^2 & \cdot & \cdot & \cdot & (1-\lambda)(1+\kappa)b_1\ov{b_2} & \cdot & \cdot & \cdot & b_1 \\
\cdot & \cdot 
& \cdot & \cdot 
& \cdot & \cdot & \cdot & \cdot & \cdot \\
\cdot & \cdot & \cdot & \cdot & \cdot & \cdot & \cdot & \cdot & \cdot \\ \hline
\cdot & \cdot  
& \cdot & \cdot 
& \cdot & \cdot & \cdot & \cdot & \cdot \\
(1-\lambda)(1+\kappa)\ov{b_1}b_2 & \cdot & \cdot & \cdot & (1+\kappa)|b_2|^2+\dfrac{1}{1+\kappa}\delta_2^2 & \cdot & \cdot & \cdot & b_2 \\
\cdot & \cdot & \cdot & \cdot & \cdot & \cdot & \cdot & \cdot & \cdot \\ \hline
\cdot & \cdot & \cdot & \cdot & \cdot & \cdot & \cdot & \cdot & \cdot \\
\cdot & \cdot & \cdot & \cdot & \cdot & \cdot & \cdot & \cdot & \cdot \\
\ov{b_1} & \cdot & \cdot & \cdot & \ov{b_2} & \cdot & \cdot & \cdot & \dfrac{1}{1+\kappa}
\ea\right)
$$
$$
\cC_{1,2}=
\left(\ba{ccc|ccc|ccc}
\cdot & \cdot & \cdot & \cdot & \cdot & \cdot & \cdot & \cdot & \cdot \\
\cdot & \dfrac{\kappa}{1+\kappa}\,\sigma_1^2
& \cdot & -\dfrac{(1-\lambda)(1+\kappa)}{\kappa}\,\ov{c_1}c_2
& \cdot & \cdot & \cdot & \cdot & \cdot \\
\cdot & \cdot & \cdot & \cdot & \cdot & \cdot & \cdot & \cdot & \cdot \\ \hline
\cdot & -\dfrac{(1-\lambda)(1+\kappa)}{\kappa}\,c_1\ov{c_2}
& \cdot & \dfrac{\kappa}{1+\kappa}\,\sigma_2^2
& \cdot & \cdot & \cdot & \cdot & \cdot \\
\cdot & \cdot & \cdot & \cdot & \cdot & \cdot & \cdot & \cdot & \cdot \\
\cdot & \cdot & \cdot & \cdot & \cdot & \cdot & \cdot & \cdot & \cdot \\ \hline
\cdot & \cdot & \cdot & \cdot & \cdot & \cdot & \cdot & \cdot & \cdot \\
\cdot & \cdot & \cdot & \cdot & \cdot & \cdot & \cdot & \cdot & \cdot \\
\cdot & \cdot & \cdot & \cdot & \cdot & \cdot & \cdot & \cdot & \cdot
\ea\right)
$$
$$
\cC_{2,1}=
\left(\ba{ccc|ccc|ccc}
\dfrac{1+\kappa}{\kappa}\,|c_1|^2 + \dfrac{\kappa}{1+\kappa}\,\delta_1^2& \cdot & \cdot & \cdot & \dfrac{(1-\lambda)(1+\kappa)}{\kappa}\,\ov{c_1}c_2 & \cdot & \cdot & \cdot & \ov{c_1} \\
\cdot & \cdot 
& \cdot & \cdot 
& \cdot & \cdot & \cdot & \cdot & \cdot \\
\cdot & \cdot & \cdot & \cdot & \cdot & \cdot & \cdot & \cdot & \cdot \\ \hline
\cdot & \cdot 
& \cdot & \cdot 
& \cdot & \cdot & \cdot & \cdot & \cdot \\
\dfrac{(1-\lambda)(1+\kappa)}{\kappa}\,c_1\ov{c_2} & \cdot & \cdot & \cdot & \dfrac{1+\kappa}{\kappa}\,|c_2|^2+\dfrac{\kappa}{1+\kappa}\,\delta_2^2 & \cdot & \cdot & \cdot & \ov{c_2} \\
\cdot & \cdot & \cdot & \cdot & \cdot & \cdot & \cdot & \cdot & \cdot \\ \hline
\cdot & \cdot & \cdot & \cdot & \cdot & \cdot & \cdot & \cdot & \cdot \\
\cdot & \cdot & \cdot & \cdot & \cdot & \cdot & \cdot & \cdot & \cdot \\
c_1 & \cdot & \cdot & \cdot & c_2 & \cdot & \cdot & \cdot & \dfrac{\kappa}{1+\kappa}
\ea\right) .
$$
$$
\cC_{2,2}=
\left(\ba{ccc|ccc|ccc}
\cdot & \cdot & \cdot & \cdot & \cdot & \cdot & \cdot & \cdot & \cdot \\
\cdot & \dfrac{1}{1+\kappa}\,\sigma_1^2 & \cdot & -(1-\lambda)(1+\kappa)b_1\ov{b_2} & \cdot & \cdot & \cdot & \cdot & \cdot \\
\cdot & \cdot & \cdot & \cdot & \cdot & \cdot & \cdot & \cdot & \cdot \\ \hline
\cdot & -(1-\lambda)(1+\kappa)\ov{b_1}b_2 & \cdot & \dfrac{1}{1+\kappa}\,\sigma_2^2 & \cdot & \cdot & \cdot & \cdot & \cdot \\
\cdot & \cdot & \cdot & \cdot & \cdot & \cdot & \cdot & \cdot & \cdot \\
\cdot & \cdot & \cdot & \cdot & \cdot & \cdot & \cdot & \cdot & \cdot \\ \hline
\cdot & \cdot & \cdot & \cdot & \cdot & \cdot & \cdot & \cdot & \cdot \\
\cdot & \cdot & \cdot & \cdot & \cdot & \cdot & \cdot & \cdot & \cdot \\
\cdot & \cdot & \cdot & \cdot & \cdot & \cdot & \cdot & \cdot & \cdot
\ea\right) .
$$
We will show that all these matrices are positive definite. For instance, for $\cC_{1,1}$ we calculate nontrivial principal minors:
\be
\lefteqn{\left|\ba{cc}
(1+\kappa)|b_1|^2+\dfrac{1}{1+\kappa}\delta_1^2 &
(1-\lambda)(1+\kappa)b_1\ov{b_2} \\
(1-\lambda)(1+\kappa)\ov{b_1}b_2 & (1+\kappa)|b_2|^2+\dfrac{1}{1+\kappa}\delta_2^2
\ea\right|=}\\
&=&\left(1-(1-\lambda)^2\right)(1+\kappa)^2|b_1|^2|b_2|^2+\delta_1^2|b_2|^2+\delta_2^2|b_1|^2+\dfrac{\delta_1^2\delta_2^2}{(1+\kappa)^2}\geq 0
\ee
$$
\left|\ba{ccc}
(1+\kappa)|b_1|^2+\dfrac{1}{1+\kappa}\delta_1^2 & (1-\lambda)(1+\kappa)b_1\ov{b_2} & b_1 \\
(1-\lambda)(1+\kappa)\ov{b_1}b_2 & (1+\kappa)|b_2|^2+\dfrac{1}{1+\kappa}\delta_2^2 & b_2 \\
\ov{b_1} & \ov{b_2} & \dfrac{1}{1+\kappa}
\ea\right|=
\dfrac{\delta_1^2\delta_2^2}{(1+\kappa)^3}-\lambda^2(1+\kappa)|b_1|^2|b_2|^2= 0
$$
The last equality follows from the definitions of $\lambda$ and $\kappa$. They imply
$\delta_1^2\delta_2^2=\lambda^2\veps_1^2\veps_2^2=\lambda^2(1+\kappa)^4|b_1|^2|b_2|^2.$
In order to prove positive definiteness of $\cC_{1,2}$ let us observe that
\eqref{ineqpos3} and definition of $\kappa$  imply
$$\sigma_1^2\sigma_2^2\geq(1-\lambda)^2\veps_1^2\veps_2^2= (1-\lambda)^2\dfrac{(1+\kappa)^4}{\kappa^4}|c_1|^2|c_2|^2.$$
Positive definiteness of $\cC_{2,1}$ and $\cC_{2,2}$ can be proved analogously. Now, observe that
$\cC_\phi=\cC_{1,1}^{}+\cC_{1,2}^{}+\cC_{2,1}^\Gamma+\cC_{2,2}^\Gamma$.
Thus, $\phi$ is decomposable.

For $\lambda\geq 1$, we consider decomposition $\cC_\phi=\cC_1+\cC_2^\Gamma$, where
$$
\cC_1=
\left(\ba{ccc|ccc|ccc}
(1+\kappa)|b_1|^2+\dfrac{1}{1+\kappa}\delta_1^2 & \cdot & \cdot & \cdot & \cdot & \cdot & \cdot & \cdot & b_1 \\
\cdot & \sigma_1^2
& \cdot & \cdot 
& \cdot & \cdot & \cdot & \cdot & \cdot \\
\cdot & \cdot & \cdot & \cdot & \cdot & \cdot & \cdot & \cdot & \cdot \\ \hline
\cdot & \cdot  
& \cdot & \sigma_2^2
& \cdot & \cdot & \cdot & \cdot & \cdot \\
\cdot & \cdot & \cdot & \cdot & (1+\kappa)|b_2|^2+\dfrac{1}{1+\kappa}\delta_2^2 & \cdot & \cdot & \cdot & b_2 \\
\cdot & \cdot & \cdot & \cdot & \cdot & \cdot & \cdot & \cdot & \cdot \\ \hline
\cdot & \cdot & \cdot & \cdot & \cdot & \cdot & \cdot & \cdot & \cdot \\
\cdot & \cdot & \cdot & \cdot & \cdot & \cdot & \cdot & \cdot & \cdot \\
\ov{b_1} & \cdot & \cdot & \cdot & \ov{b_2} & \cdot & \cdot & \cdot & \dfrac{1}{1+\kappa}
\ea\right)
$$
$$
\cC_{2}=
\left(\ba{ccc|ccc|ccc}
\dfrac{1+\kappa}{\kappa}\,|c_1|^2 + \dfrac{\kappa}{1+\kappa}\,\delta_1^2& \cdot & \cdot & \cdot & \cdot & \cdot & \cdot & \cdot & \ov{c_1} \\
\cdot & \cdot 
& \cdot & \cdot 
& \cdot & \cdot & \cdot & \cdot & \cdot \\
\cdot & \cdot & \cdot & \cdot & \cdot & \cdot & \cdot & \cdot & \cdot \\ \hline
\cdot & \cdot 
& \cdot & \cdot 
& \cdot & \cdot & \cdot & \cdot & \cdot \\
\cdot & \cdot & \cdot & \cdot & \dfrac{1+\kappa}{\kappa}\,|c_2|^2+\dfrac{\kappa}{1+\kappa}\,\delta_2^2 & \cdot & \cdot & \cdot & \ov{c_2} \\
\cdot & \cdot & \cdot & \cdot & \cdot & \cdot & \cdot & \cdot & \cdot \\ \hline
\cdot & \cdot & \cdot & \cdot & \cdot & \cdot & \cdot & \cdot & \cdot \\
\cdot & \cdot & \cdot & \cdot & \cdot & \cdot & \cdot & \cdot & \cdot \\
c_1 & \cdot & \cdot & \cdot & c_2 & \cdot & \cdot & \cdot & \dfrac{\kappa}{1+\kappa}
\ea\right) .
$$
The inequality $\delta_1\delta_2\geq\veps_1\veps_2$ imply that both matrices are positive definite. Hence, $\phi$ again is positive.
\end{proof}
In next proposition we give some sufficient condition for nondecomposability.
\begin{prop}
Assume that a positive map \eqref{merging3pos} satisfies $\vec{b}\neq 0$ and $\vec{c}\neq 0$. If
\beq
\label{nondec}
s(\veps^2+\delta^2)^{1/2}<\Vert\vec{b}\Vert^2 + \Vert\vec{c}\Vert^2,
\eeq
then $\phi$ is nondecomposable.
\end{prop}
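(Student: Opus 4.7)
The standard route is to find an entangled PPT operator $Z\in\mathbb{T}$ with $\Tr(\cC_\phi^\rT Z)<0$; this certifies $\phi$ as an entanglement witness for $Z$ and hence as nondecomposable. I would follow this approach, as the authors do in Theorem \ref{2pos2copos} and Proposition for $\Omega_{\cK_1,\cK_2}$.

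Inspection of the Choi matrix \eqref{Choiphi} shows that $\cC_\phi$ has nonzero off-diagonal entries only at positions $((i,i),(3,3))$ (with value $b_i$) and $((i,3),(3,i))$ (with value $\ov{c_i}$), $i=1,2$. These two families of positions are interchanged by the partial transposition $\Gamma$. This suggests an ansatz
$$Z=v_b^{} v_b^{*}+v_c^{} v_c^{*}+(\text{diagonal product-state corrections}),$$
where $v_b$ is supported in $\mathrm{span}\{\eps_1\otimes\eps_1,\eps_2\otimes\eps_2,\eps_3\otimes\eps_3\}$ (pairing with the $b_i$ positions) and $v_c$ is supported in $\mathrm{span}\{\eps_1\otimes\eps_3,\eps_2\otimes\eps_3,\eps_3\otimes\eps_1,\eps_3\otimes\eps_2\}$ (pairing with the $c_i$ positions). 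The phases of the coefficients are chosen so that each of $b_i(v_b)_{ii}\ov{(v_b)_{33}}$ and $c_i(v_c)_{3i}\ov{(v_c)_{i3}}$ is a negative real, forcing all cross-terms in $\Tr(\cC_\phi^\rT Z)$ to come out with the correct sign. Positivity of $Z$ is immediate from its rank-one decomposition, while product-state summands of the form $(\eps_i\eps_i^*)\otimes(\eps_3\eps_3^*)$ etc. contribute $0$ to the trace (the relevant diagonal entries of $\cC_\phi$ vanish there) but can be used to enforce $Z^\Gamma\ge 0$.

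Computing $Z^\Gamma$ exploits that partial transposition exchanges the supports of $v_b^{}v_b^*$ and $v_c^{}v_c^*$: the off-diagonal block of $(v_b^{}v_b^*)^\Gamma$ lives in the ``$c$-sector'' and vice versa. Positivity of $Z^\Gamma$ then reduces to PSD-ness of a handful of $2\times 2$ or $3\times 3$ blocks coupling $(v_b)_{ii}$ with $(v_c)_{i3},(v_c)_{3i}$, which imposes magnitude relations between the coefficients of $v_b$ and $v_c$ (together with the product-state weights). After the optimal phase alignment, one obtains
$$\Tr(\cC_\phi^\rT Z)=\sum_{i=1}^{2}\mu_i^{2}|(v_b)_{ii}|^{2}+|(v_b)_{33}|^{2}-2\sum_{i=1}^{2}\bigl(|b_i|\,|(v_b)_{ii}||(v_b)_{33}|+|c_i|\,|(v_c)_{i3}||(v_c)_{3i}|\bigr),$$
and minimization in the remaining magnitudes subject to the PPT constraints is expected to be strictly negative precisely when $s(\veps^{2}+\delta^{2})^{1/2}<\Vert\vec b\Vert^{2}+\Vert\vec c\Vert^{2}$.

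The main obstacle is aligning the PPT constraint with the asymmetric hypothesis \eqref{nondec}. The appearance of $s=\max\{\Vert\vec b\Vert,\Vert\vec c\Vert\}$ indicates that the two rank-one components should be scaled differently---roughly proportionally to $\Vert\vec b\Vert$ and $\Vert\vec c\Vert$---and it will likely be convenient to split the argument into the cases $\Vert\vec b\Vert\geq\Vert\vec c\Vert$ and $\Vert\vec c\Vert\geq\Vert\vec b\Vert$, using the $\Gamma$-symmetry that interchanges the roles of $v_b$ and $v_c$. Verifying that the chosen ansatz meets both the PPT requirement and the negative-trace requirement exactly under the threshold \eqref{nondec} is where the bulk of the calculation will lie.
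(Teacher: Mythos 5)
Your overall strategy is exactly the paper's: exhibit a PPT operator $Z$ supported on precisely the matrix positions where the Choi matrix \eqref{Choiphi} is nonzero, choose the phases of its off-diagonal entries opposite to those of $b_i$ and $c_i$ so that every cross term in $\Tr(\cC_\phi^\rT Z)$ comes out negative, and conclude that $\phi$ witnesses the entanglement of $Z$. However, as written the proposal has a genuine gap: no $Z$ is ever exhibited, positivity of $Z$ and $Z^\Gamma$ is never verified, and the final inequality is only ``expected'' to reproduce the threshold \eqref{nondec} --- the entire quantitative content of the proposition is deferred. There is also a structural snag in the ansatz $Z=v_b^{}v_b^*+v_c^{}v_c^*+(\text{diagonal})$: a single rank-one $v_b^{}v_b^*$ forces $|(v_b)_{11}||(v_b)_{33}|=|b_1|$ and $|(v_b)_{22}||(v_b)_{33}|=|b_2|$ simultaneously, which is incompatible with equal diagonal weights at the two positions $\eps_i^{}\eps_i^*\otimes\eps_i^{}\eps_i^*$ unless $|b_1|=|b_2|$; one must instead allow the three diagonal entries of the $b$-sector to be chosen independently of any rank-one structure.

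The paper closes the gap with a single explicit one-parameter family: $Z$ has diagonal entries $\gamma$ at $\eps_i^{}\eps_i^*\otimes\eps_i^{}\eps_i^*$ ($i=1,2$), $1$ at $\eps_i^{}\eps_i^*\otimes\eps_3^{}\eps_3^*$, $s_i^2$ at $\eps_3^{}\eps_3^*\otimes\eps_i^{}\eps_i^*$, and $\gamma^{-1}s^2$ at $\eps_3^{}\eps_3^*\otimes\eps_3^{}\eps_3^*$, together with off-diagonal entries $-\ov{b_i}$ and $-c_i$ at the same positions where $\cC_\phi$ carries $b_i$ and $\ov{c_i}$. Positivity of $Z$ and of $Z^\Gamma$ then reduces, block by block, to the inequalities $s_i^2\geq |b_i|^2$, $s_i^2\geq|c_i|^2$, $s^2\geq\Vert\vec{b}\Vert^2$ and $s^2\geq\Vert\vec{c}\Vert^2$, all of which hold automatically by the definitions $s_i=\max\{|b_i|,|c_i|\}$ and $s=\max\{\Vert\vec{b}\Vert,\Vert\vec{c}\Vert\}$ --- in particular no case distinction between $\Vert\vec{b}\Vert\geq\Vert\vec{c}\Vert$ and $\Vert\vec{c}\Vert\geq\Vert\vec{b}\Vert$ is needed. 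The trace then evaluates in closed form to $\gamma(\veps^2+\delta^2)+\gamma^{-1}s^2-2\Vert\vec{b}\Vert^2-2\Vert\vec{c}\Vert^2$, and optimizing at $\gamma=s(\veps^2+\delta^2)^{-1/2}$ yields $2\big(s(\veps^2+\delta^2)^{1/2}-\Vert\vec{b}\Vert^2-\Vert\vec{c}\Vert^2\big)$, which is negative exactly under \eqref{nondec}. You would need to supply this (or an equivalent) explicit construction and verification for your outline to become a proof.
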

\begin{proof}
Consider $Z\in\bM_3(\bC)\otimes\bM_3(\bC)$ where
$$
Z=
\left(\ba{ccc|ccc|ccc}
\gamma & \cdot & \cdot & \cdot & \cdot & \cdot & \cdot & \cdot & -\ov{b_1} \\
\cdot & \cdot & \cdot & \cdot & \cdot & \cdot & \cdot & \cdot & \cdot \\
\cdot & \cdot & 1 & \cdot & \cdot & \cdot & -c_1 & \cdot & \cdot \\ \hline
\cdot & \cdot & \cdot & \cdot & \cdot & \cdot & \cdot & \cdot & \cdot \\
\cdot & \cdot & \cdot & \cdot & \gamma & \cdot & \cdot & \cdot & -\ov{b_2} \\
\cdot & \cdot & \cdot & \cdot & \cdot & 1 & \cdot & -c_2 & \cdot \\ \hline
\cdot & \cdot & -\ov{c_1} & \cdot & \cdot & \cdot & s_1^2 & \cdot & \cdot \\
\cdot & \cdot & \cdot & \cdot & \cdot & -\ov{c_2} & \cdot & s_2^2 & \cdot \\
-b_1 & \cdot & \cdot & \cdot & -b_2 & \cdot & \cdot & \cdot & \gamma^{-1}s^2
\ea\right)
$$
for $\gamma>0$.
One can easily check that both $Z$ and $Z^\Gamma$ are positive, hence $Z$ is a PPT operator.
One can check that
$$
\Tr(\cC_\phi^\rT Z)= \gamma(\veps^2+\delta^2)+\gamma^{-1}s^2- 2\Vert\vec{b}\Vert^2- 2\Vert\vec{c}\Vert^2.
$$
The above expression attains minimal value for $\gamma =s(\veps^2+\delta^2)^{-1/2}$. For such a value we have
$$
\Tr(\cC_\phi^\rT Z)=2\left[s(\veps^2+\delta^2)^{1/2}-\Vert\vec{b}\Vert^2-\Vert\vec{c}\Vert^2\right].
$$
If \eqref{nondec} is satisfied, then $\Tr(\cC_\phi^\rT Z)<0$. Therefore $\phi$ is an entanglement witness for $Z$, hence $\phi$ is nondecomposable
\end{proof}

\begin{cor}
Assume that $\delta=0$, and $\Vert\vec{b}\Vert\geq\Vert\vec{c}\Vert>0$. If
\beq
\label{opiu}
\la\vec{b},\vec{c}\ra<\frac{1}{2}\left(1+\frac{\Vert\vec{c}\Vert^2}{\Vert\vec{b}\Vert^2}\right)\Vert\vec{c}\Vert^2,
\eeq
then $\phi$ is nondecomposable.

In particular, for $\Vert\vec{b}\Vert=\Vert\vec{c}\Vert$, the map $\phi$ is nondecomposable if and only if the vectors $\vec{b}$, $\vec{c}$ are linearly independent.
\end{cor}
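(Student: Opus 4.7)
The plan is to apply the preceding proposition, which provides the sufficient condition $s(\veps^2+\delta^2)^{1/2}<\Vert\vec{b}\Vert^2+\Vert\vec{c}\Vert^2$ for nondecomposability, and simplify it under the hypotheses $\delta=0$ and $\Vert\vec{b}\Vert\geq\Vert\vec{c}\Vert>0$. First I would note that $\delta=0$ kills the $\delta^2$ term inside the square root, and that the ordering of norms forces $s=\Vert\vec{b}\Vert$. Then I would expand
$$\veps^2=\veps_1^2+\veps_2^2=(|b_1|+|c_1|)^2+(|b_2|+|c_2|)^2=\Vert\vec{b}\Vert^2+\Vert\vec{c}\Vert^2+2\la\vec{b},\vec{c}\ra,$$
so that the nondecomposability condition becomes
$$\Vert\vec{b}\Vert\sqrt{\Vert\vec{b}\Vert^2+\Vert\vec{c}\Vert^2+2\la\vec{b},\vec{c}\ra}\;<\;\Vert\vec{b}\Vert^2+\Vert\vec{c}\Vert^2.$$

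Next, since both sides are positive, squaring is an equivalence. Writing $B=\Vert\vec{b}\Vert^2$, $C=\Vert\vec{c}\Vert^2$, $I=\la\vec{b},\vec{c}\ra$, the squared inequality reads $B(B+C+2I)<(B+C)^2$, which rearranges to $2BI<C(B+C)$, i.e.
$$I<\frac{C}{2}\!\left(1+\frac{C}{B}\right),$$
which is exactly \eqref{opiu}. This is a straightforward computation with no real obstacle.

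For the second statement, assume $\Vert\vec{b}\Vert=\Vert\vec{c}\Vert$. Condition \eqref{opiu} then reduces to $\la\vec{b},\vec{c}\ra<\Vert\vec{b}\Vert\Vert\vec{c}\Vert$. By the Cauchy--Schwarz inequality (applied to vectors in the first quadrant of $\bR^2$, so no absolute value issue), this strict inequality is equivalent to linear independence of $\vec{b}$ and $\vec{c}$. Thus the sufficient direction of the ``in particular'' claim follows from the computation above. For the converse, if $\vec{b}$ and $\vec{c}$ are linearly dependent then Proposition \ref{dec_dep} directly yields that $\phi$ is decomposable, closing the equivalence.

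The only point requiring care is that the reduction of the sufficient condition to \eqref{opiu} is an \emph{equivalence} (thanks to the positivity of both sides before squaring), which is what makes the sharp ``if and only if'' statement in the equal-norm case available; otherwise, one would only obtain a one-directional implication from the previous proposition. No genuinely hard step is anticipated.
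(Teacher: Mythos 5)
Your proposal is correct and follows essentially the same route as the paper: the paper's proof simply asserts that "by simple calculations" the condition \eqref{nondec} with $\delta=0$ is equivalent to \eqref{opiu}, and your expansion of $\veps^2=\Vert\vec{b}\Vert^2+\Vert\vec{c}\Vert^2+2\la\vec{b},\vec{c}\ra$, the identification $s=\Vert\vec{b}\Vert$, and the squaring step are precisely those calculations, while the equal-norm case is handled exactly as in the paper via Cauchy--Schwarz together with Proposition \ref{dec_dep} for the converse. Your explicit remark that the reduction is an equivalence (so that the "if and only if" in the equal-norm case is legitimate) is a useful point the paper leaves implicit.
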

\begin{proof}
By simple calculations one can easily check, that if $\delta=0$, then inequalities \eqref{nondec} and \eqref{opiu} are equivalent. For the special case $\Vert\vec{b}\Vert=\Vert\vec{c}\Vert$, the inequality \eqref{opiu} is equivalent to inequality $\la\vec{b},\vec{c}\ra<\Vert\vec{b}\Vert\Vert\vec{c}\Vert$.
\end{proof}
\subsection{Extremality}
We say that two maps $\phi,\phi':B(K)\to B(H)$ are equivalent, if there are invertible operators $Q\in B(K)$ and $R\in B(H)$ such that $\phi'(X)=R\phi(QXQ^*)R^*$. 
One can check that any map $\phi:\bM_3(\bC)\to \bM_3(\bC)$ of the form \eqref{merging3pos} such that both $\veps_1$ and $\veps_2$ are strictly positive, is equivalent to a map $\phi'$ of the form \eqref{merging3pos} such that $\veps_1=\veps_2=1$ and $b_i,c_i\geq 0$ for $i=1,2$. Indeed, given a map $\phi$ of the form \eqref{merging3pos} one should take $Q=\mathrm{diag}(q_1,q_2,1)$ and $R=\mathrm{diag}(r_1,r_2,1)$ where $q_1,q_2,r_1,r_2\in\bC$ satisfy conditions
$|q_i||r_i|=(|b_i|+|c_i|)^{-1}$ and the numbers $q_ir_ib_i$, $q_i\ov{r_i}c_i$ are both nonnegative, $i=1,2$. 

\begin{thm}
Let $\phi$ be a map of the form \eqref{merging3pos}. Then the following are equivalent:
\begin{enumerate}
\item
$\phi$ is exposed,
\item
$\phi$ is extremal,
\item
each of the following conditions is satisfied
\begin{enumerate}
\item $\vec{b}\neq 0$ and $\vec{c}\neq 0$, 
\item $\delta_1=\delta_2=0$, 
\item $\sigma_1\sigma_2=\vep_1\vep_2$, 
\item $\langle\vec{b},\vec{c}\rangle=0$.
\end{enumerate}
\end{enumerate}
\end{thm}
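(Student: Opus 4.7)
I prove the equivalence in the cycle $(1)\Rightarrow(2)\Rightarrow(3)\Rightarrow(1)$. The first implication is the classical fact that every exposed ray of a closed convex cone is extremal and needs no work. For $(3)\Rightarrow(1)$ I recognize $\phi$ as an instance of the maps shown exposed in Theorem \ref{mainm}. Conditions (a) and (d) together force that, for each $i\in\{1,2\}$, exactly one of $b_i,c_i$ is nonzero, while both $\vec b$ and $\vec c$ have at least one nonzero entry; after the symmetry $1\!\leftrightarrow\!2$ I may assume $c_1=b_2=0$ with $b_1,c_2\neq 0$. Condition (b) then yields $\vep_1=|b_1|$ and $\vep_2=|c_2|$, while (c) provides $\nu:=\sigma_2^2/|c_2|^2>0$ with the consistency $\sigma_1^2=|b_1|^2/\nu$. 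Plugging these values into \eqref{merging3pos}, $\phi$ matches \eqref{block} of Theorem \ref{mainm} for the one-dimensional spaces $\cK_i=\cH_i=\bC$, scalar operators $A_1=|b_1|$, $A_2=|c_2|$, phases $\theta_1=\arg b_1$ and $\theta_2=\arg c_2$, and the above $\nu$; exposedness is inherited.

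The substantive direction is $(2)\Rightarrow(3)$, proved contrapositively by exhibiting, for each failing condition, a positive $\psi\leq\phi$ not proportional to $\phi$. If $\delta_1>0$ (symmetrically $\delta_2>0$), the rank-one completely positive map $\psi_t(X)=t\,x_{11}E_{11}$ gives $\phi-\psi_t$ again of form \eqref{merging3pos} with only $\delta_1$ reduced to $\sqrt{\delta_1^2-t}$; Proposition \ref{3pos} certifies positivity of $\phi-\psi_t$ for small $t>0$ whenever $\sigma_1\sigma_2+\delta_1\delta_2\geq\vep_1\vep_2$ is strict or $\delta_2=0$. If (b) holds but (c) fails, the analogous CP subtraction $\psi_t(X)=t\,x_{22}E_{11}$ reduces $\sigma_2^2$ and positivity survives thanks to the strict inequality $\sigma_1\sigma_2>\vep_1\vep_2$. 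If (b) and (c) hold but (d) fails, say $|b_1||c_1|>0$, $\phi$ splits as $\phi=\phi_b+\phi_c$ where $\phi_b$ carries the pair $(b_1,b_2)$ and $\phi_c$ carries $(c_1,c_2)$; diagonal weights guided by $\gamma_b=|b_1|/(|b_1|+|c_1|)$ for the $(1,1)$-block (and suitable analogues elsewhere) make each summand a positive rescaled map of form \eqref{merging3pos}, with neither proportional to $\phi$. Finally, if (a) fails, Proposition \ref{dec_dep} supplies an explicit decomposition $\cC_\phi=\cC_1+\cC_2^\Gamma$ that remains nontrivial under the surviving hypotheses, translating to the desired $\psi\leq\phi$.

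The principal obstacle is the residual boundary subcase within the (b)-failure argument: $\delta_1,\delta_2>0$ combined with exact saturation $\sigma_1\sigma_2+\delta_1\delta_2=\vep_1\vep_2$. Here any CP subtraction that decreases $\delta_1$ alone breaks positivity along the tight saturation. I anticipate resolving this by a two-parameter tangent perturbation along the saturation surface in $(\delta_1,\delta_2,\sigma_1,\sigma_2)$-parameter space, coupling a CP subtraction that decreases $\delta_1$ with a compensating completely copositive subtraction that rebalances the product, yielding a compound $\psi\leq\phi$ non-proportional to $\phi$. Verifying positivity of this compound subtraction along the boundary is the technical heart of the argument; the remaining cases then reduce to bookkeeping via the positivity criterion of Proposition \ref{3pos} and the decomposability analysis of Proposition \ref{dec_dep}.
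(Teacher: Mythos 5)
Your directions $(1)\Rightarrow(2)$ and $(3)\Rightarrow(1)$ are correct and follow the paper's route; in particular your explicit identification of the normalized map with the form \eqref{block} of Theorem \ref{mainm} (with $A_1=|b_1|$, $A_2=|c_2|$, $\theta_i$ the phases and $\nu=\sigma_2^2/|c_2|^2$) is exactly what the paper means by ``special case of Theorem \ref{mainm}'', spelled out in more detail. The problem is $(2)\Rightarrow(3)$, where your case analysis contains the gap you yourself flag: the saturated subcase $\delta_1,\delta_2>0$ with $\sigma_1\sigma_2+\delta_1\delta_2=\vep_1\vep_2$. This gap is genuine, and the repair you anticipate cannot work in the form described: any subtraction that only alters the diagonal coefficients $f_i,w_i$ leaves $\vep_1\vep_2$ unchanged while weakly decreasing $\sigma_1\sigma_2$ and $\delta_1\delta_2$, so the saturated inequality of Proposition \ref{3pos} is necessarily broken no matter how you couple a completely positive with a completely copositive diagonal perturbation. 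To stay on the boundary the subtracted map must carry nonzero $(1,3),(3,1),(2,3),(3,2)$ entries so that $\vep_1\vep_2$ decreases together with $\sigma_1\sigma_2$. This is precisely the paper's device: after normalizing to $\vep_1=\vep_2=1$, $b_i,c_i\geq 0$, $\sigma_1=\sigma_2=\sigma$, it subtracts $\lambda\phi_0$ with $\phi_0$ the Miller--Olkiewicz map \eqref{merging3pos0} and $0<\lambda\leq\min\{b_1,c_2\}$; the residual has parameters $\vep_i'=(1-\lambda)^{1/2}$, $\delta_i'=\delta_i$, $\sigma_i'=(\sigma^2-\lambda)^{1/2}$, so that $\delta_1'\delta_2'+\sigma_1'\sigma_2'=1-\lambda=\vep_1'\vep_2'$ and positivity survives exactly on the boundary. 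Note that this single subtraction simultaneously handles your failure cases for (b), (c) and (d) --- the residual is proportional to $\phi$ only when all of (a)--(d) hold --- which would also spare you the only vaguely sketched splitting $\phi=\phi_b+\phi_c$ in the case $\la\vec b,\vec c\ra>0$; as written, that splitting (``diagonal weights guided by $\gamma_b$ \dots and suitable analogues elsewhere'') is not a proof.

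Two smaller points. First, your disposal of the case where (a) fails relies on the decomposition of Proposition \ref{dec_dep} being nontrivial, which is false in the degenerate situation $\vec c=0$, $\sigma_1=\sigma_2=\delta_1=\delta_2=0$, where $\phi(X)=AXA^*$ with $A=\mathrm{diag}(b_1,0,1)$ is extremal and exposed; this shows the statement needs the implicit nondegeneracy $\vep_1,\vep_2>0$ that the paper's normalization presupposes, and your argument should either impose or address it. Second, before invoking Proposition \ref{3pos} on the perturbed maps you should, as the paper does, reduce by equivalence to $\vep_1=\vep_2=1$ and $\sigma_1=\sigma_2$; otherwise the bookkeeping of which parameter each subtraction changes is not well controlled.
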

\begin{proof}
(2) $\Rightarrow$ (3) 
As we explained in the introduction to this subsection, one may restrict to the case $\vep_1=\vep_2=1$ and $b_i\geq 0$, $c_i\geq 0$, $i=1,2$. Further, using the same arguments as in the proof of Theorem \ref{mainm} (cf. \eqref{EF}), one can also assume that $\sigma_1=\sigma_2=:\sigma$. First of all observe that if $\delta_1\delta_2+\sigma_1\sigma_2>1$, then it is rather obvious that $\phi$ is not extremal. The same observation is valid for the case $\sigma_1\sigma_2=1$ and $\delta_1\delta_2=0$ but one of $\delta_1$ and $\delta_2$ is strictly positive. So, assume that $\delta_1\delta_2+\sigma_1\sigma_2=1$. If  follows from the proof of Proposition \ref{dec_dep} that $\phi$ is a nontrivial sum of two positive maps if $\vec{b}$ and $\vec{c}$ are linearly dependent. Hence, assume further, that $\vec{b}$ and $\vec{c}$ are nonzero vectors. Since $b_i+c_i=1$ for $i=1,2$, we have $b_1>0$, $c_2>0$ or $b_2>0$, $c_1>0$. Assume that the first pair if inequalities is satisfied. Let $0<\lambda\leq \min \{b_1,c_2\}$. We will show that $\lambda\phi_0\leq \phi$, where
$\phi_0$ is the map of the form
\beq
\label{merging3pos0}
\phi_0\left(\ba{ccc}
x_{11} & x_{12} & x_{13} \\ x_{21} & x_{22} & x_{23} \\ x_{31} & x_{32} & x_{33}
\ea\right)
=\left(\ba{ccc}
x_{11}+x_{22} & 0 & x_{13} \\
0 & x_{11}+x_{22} & x_{32} \\
x_{31} & x_{23} & x_{33}
\ea\right).
\eeq
Indeed, observe that
\be
\lefteqn{(\phi-\lambda\phi_0)\left(\ba{ccc}
x_{11} & x_{12} & x_{13} \\ x_{21} & x_{22} & x_{23} \\ x_{31} & x_{32} & x_{33}
\ea\right)=} \\
&=&\left(\ba{ccc}
(1-\lambda+\delta_1^2)x_{11}+(\sigma^2-\lambda)x_{22} & 0 & (b_1-\lambda)x_{13}+c_1x_{31} \\
0 & (1-\lambda+\delta_2^2)x_{22}+(\sigma^2-\lambda)x_{11} & b_1x_{23}+(c_2-\lambda)x_{32} \\
(b_1-\lambda)x_{31}+c_1x_{13} & b_2x_{32}+(c_2-\lambda)x_{23} & (1-\lambda)x_{33}
\ea\right).
\ee
This map is equivalent to 
\be
\lefteqn{\left(\ba{ccc}
x_{11} & x_{12} & x_{13} \\ x_{21} & x_{22} & x_{23} \\ x_{31} & x_{32} & x_{33}
\ea\right)\mapsto} \\
&&\left(\ba{ccc}
(1-\lambda+\delta_1^2)x_{11}+(\sigma^2-\lambda)x_{22} & 0 & (1-\lambda)^{-1/2}\big((b_1-\lambda)x_{13}+c_1x_{31}\big) \\
0 & (1-\lambda+\delta_2^2)x_{22}+(\sigma^2-\lambda)x_{11} & (1-\lambda)^{-1/2}\big(b_1x_{23}+(c_2-\lambda)x_{32}\big) \\
(1-\lambda)^{-1/2}\big((b_1-\lambda)x_{31}+c_1x_{13}\big) & (1-\lambda)^{-1/2}\big(b_2x_{32}+(c_2-\lambda)x_{23}\big) & x_{33}
\ea\right).
\ee
The latter is of the form \eqref{merging3pos}. Let us equip all coefficients described in \eqref{merging3pos} for this map with primes. Observe that $\vep_i'=(1-\lambda)^{-1/2}(b_i+c_i-\lambda)=(1-\lambda)^{1/2}$. Moreover, $\delta_i'=\delta_i$, and $\sigma_i'=(\sigma^2-\lambda)^{1/2}$. Consequently, $\delta_1'\delta_2' +\sigma_1'\sigma_2' =\delta_1\delta_2 +\sigma^2-\lambda =1-\lambda=\vep_1'\vep_2'$, and according to Proposition \ref{3pos} the map $\phi-\lambda\phi_0$ is positive. Finally, observe that $\phi$ is not a multiple of $\phi_0$ unless all conditions (a) -- (d) are satisfied. Thus extremality of $\phi$ implies all these conditions. 

(3) $\Rightarrow$ (1). It is a special case of Theorem \ref{mainm}.
\end{proof}

\section{Final remarks}
In \cite{MM10} one of the authors formulated a conjecture that each extremal positive map  $\phi:B(\cK)\to B(\cH)$ with the property that $\mathrm{rank}\,\phi(P)=1$ for some one-dimensional projection $P\in B(\cK)$, must be of the form \eqref{ads}. The map constructed by Miller and Olkiewicz is a counterexample for this conjecture. However, motivated by Theorem \ref{mainm} we conjecture that maps which have the form  \eqref{block} or have the form being a sort of variation of \eqref{block}, should be typical maps satisfying the property described in the begining of this section (cf. \cite{MO2}).

Let us remind that maximal faces in the cone $\bP(\cK,\cH)$ are of the form $\cF_{\xi,x}=\{\phi:\,\phi(\xi\xi^*)x=0\}$ for $\xi\in\cK$ and $x\in\cH$ \cite{EK}. Thus, if the conjecture were confirmed, we would be very close to a general form for extremal (or exposed) positive maps.

\section*{Acknoledgements}
The paper is supported by ERC AdG QOLAPS. MM thanks Seung-Hyeok Kye for useful remarks and questions concerning the case $\cK=\cH=\bC^3$. We also thank Marek Miller for careful reading the manuscript and useful remarks.

\end{document}